\definecolor{Chocolat}{rgb}{0.36, 0.2, 0.09}
\definecolor{BleuTresFonce}{rgb}{0.215, 0.215, 0.36}
\definecolor{EgyptianBlue}{rgb}{0.06, 0.2, 0.65}
\newcommand{\llbincomb}[7]{\ensuremath{\!\!\!\!
 \vcenter{\hbox{\xymatrix@R=.4pc@C=.2pc{ 
 			#4\ar@{-}[dr] && #5\ar@{-}[dl]&&\\
 			&*+[o][F-]{#3}\ar@{-}[dr]&&#6\ar@{-}[dl] \\
 			&&*+[o][F-]{#2}\ar@{-}[dr] &   &#7\ar@{-}[dl]\\
 			&&&*+[o][F-]{#1}\ar@{-}[d]&\\
 			&&&*{}&
 		}}}}}
\newcommand{\lrbincomb}[7]{\!\!\!\ensuremath{
 \vcenter{\hbox{\xymatrix@R=.4pc@C=.2pc{ 
 			&#5\ar@{-}[dr] && #6\ar@{-}[dl]&\\
 			#4\ar@{-}[dr]&&*+[o][F-]{#3}\ar@{-}[dl] \\
 			&*+[o][F-]{#2}\ar@{-}[dr] &   &#7\ar@{-}[dl]\\
 			&&*+[o][F-]{#1}\ar@{-}[d]&\\
 			&&*{}&
 		}}}}\!\!\!\!}
\newcommand{\rlbincomb}[7]{\ensuremath{
\vcenter{\hbox{\xymatrix@R=.4pc@C=.2pc{ 
			#5\ar@{-}[dr] && #6\ar@{-}[dl]&\\
			&*+[o][F-]{#3}\ar@{-}[dr]& & #7\ar@{-}[dl]\\
           #4\ar@{-}[dr] &   &*+[o][F-]{#2}\ar@{-}[dl]&\\
			&*+[o][F-]{#1}\ar@{-}[d]&&\\
			&*{}&&&
}}}}\!\!\!\!\!\!\!}
\newcommand{\rrbincomb}[7]{\ensuremath{
\vcenter{\hbox{\xymatrix@R=.4pc@C=.2pc{ 
			&&#6\ar@{-}[dr] && #7\ar@{-}[dl]\\
			&#5\ar@{-}[dr]& & *+[o][F-]{#3}\ar@{-}[dl]&\\
			#4\ar@{-}[dr] &   &*+[o][F-]{#2}\ar@{-}[dl]&&\\
			&*+[o][F-]{#1}\ar@{-}[d]&&&\\
			&*{}&&&&
}}}}\!\!\!\!\!\!\!\!\!\!}
\newtheorem{theorem}{Theorem}
\newtheorem{corollary}[theorem]{Corollary}
\newtheorem{lemma}[theorem]{Lemma}
\newtheorem{proposition}[theorem]{Proposition}
\newtheorem{conjecture}[theorem]{Conjecture}
\theoremstyle{definition}
\newtheorem{remark}[theorem]{Remark}
\newtheorem{definition}[theorem]{Definition}
\DeclareMathAlphabet{\pazocal}{OMS}{zplm}{m}{n}
\def\calG{\pazocal{G}}
\def\calI{\pazocal{I}}
\def\calO{\pazocal{O}}
\def\calP{\pazocal{P}}
\def\calQ{\pazocal{Q}}
\def\calS{\pazocal{S}}
\def\calT{\pazocal{T}}
\def\calX{\pazocal{X}}
\def\calY{\pazocal{Y}}
\def\ot{\otimes}
\def\sgn{{\rm sgn}}
\DeclareMathOperator{\id}{id}
\DeclareMathOperator{\Ass}{Ass}
\DeclareMathOperator{\Lie}{Lie}
\DeclareMathOperator{\Com}{Com}
\DeclareMathOperator{\Leib}{Leib}
\DeclareMathOperator{\tCom}{tCom}
\DeclareMathOperator{\pAss}{pAss}
\DeclareMathOperator{\tLie}{\widetilde{Lie}}
\DeclareMathOperator{\ttCom}{\widetilde{tCom}}
\DeclareMathOperator{\LTS}{LTS}
\DeclareMathOperator{\End}{End}
\DeclareMathOperator{\Jord}{Jord}
\DeclareMathOperator{\JTS}{JTS}
\newcommand{\di}{\mathit{di}\textnormal{-}}
\newcommand{\hadam}{\otimes}
\DeclareMathOperator{\PL}{PreLie}
\DeclareMathOperator{\Perm}{Perm}
\def\bbk{{\mathbb k}}
\newcommand{\ac}{\scriptstyle \text{\rm !`}}
\begin{document}

\title[Veronese powers]{Veronese powers of operads\\ and pure homotopy algebras}

\author{Vladimir Dotsenko}
\address{School of Mathematics, Trinity College, Dublin 2, Ireland, and Departamento de Matem\'aticas, CINVESTAV-IPN, Av. Instituto Polit\'ecnico Nacional 2508, Col. San Pedro Zacatenco, M\'exico, D.F., CP 07360, Mexico}
\email{vdots@maths.tcd.ie}

\author{Martin Markl}
\address{Mathematical Institute of the Academy, \v{Z}itn\'a 25, 115 67 Prague 1, The Czech Republic, and 
MFF UK, Sokolovsk\'a 83, 186 75 Prague 8, The Czech Republic}
\email{markl@math.cas.cz}

\author{Elisabeth Remm}
\address{Laboratoire de Math\'ematiques et Applications, Universit\'e de Haute Alsace, Facult\'e des
Sciences et Techniques, 4, rue des Fr\`eres Lumi\`ere, 68093 Mulhouse cedex, France}
\email{Elisabeth.Remm@uha.fr}

\keywords{Operad, Veronese power, homological purity, Koszul duality, Koszulness, Zeilberger's algorithm}
\subjclass[2010]{18D50 (Primary), 18G55, 33F10, 55P48 (Secondary)}

\thanks{The second author was supported by the
Eduard \v Cech Institute P201/12/G028 and RVO: 67985840.}

\begin{abstract}
We define the $m$th Veronese power of a weight graded operad $\calP$ to be its suboperad $\calP^{[m]}$
generated by operations of weight $m$.  It turns out that, unlike Veronese 
powers of associative algebras, homological
properties of operads are, in general, not improved by this construction.
However, under some technical conditions, Veronese powers of quadratic
Koszul operads are meaningful in the context of the Koszul duality theory. 
Indeed, we show that in many important cases the operads $\calP^{[m]}$ are related 
by Koszul duality to operads
describing strongly homotopy algebras with only one nontrivial
operation.
Our theory has immediate applications to objects as Lie $k$-algebras
and Lie triple systems.
In the case of Lie $k$-algebras, we also discuss a similarly looking ungraded construction 
which is frequently used in the literature. We establish that the corresponding operad does
not possess good homotopy properties, and that it leads to a very simple example of a 
non-Koszul quadratic operad for which the
Ginzburg--Kapranov power series test is inconclusive.
\end{abstract}

\maketitle

\section*{Introduction}

Many examples of algebras with $m$-ary structure operations are ``pure'' versions of homotopy algebras in the following sense. Suppose that $\calP$ is a binary quadratic Koszul operad, and $\calP_\infty=\Omega(\calP^{\ac})$ is its minimal model. We use the term \emph{pure $\calP_\infty$-algebras} for algebras over the quotient of the operad~$\calP_\infty$ by the ideal generated by all its generating operations except for those of arity $m$. (The space of generators of the thus obtained operad is homologically pure, hence the terminology.)

Another (in many ways more classical) type of algebras with $m$-ary structure operations is obtained as follows. Let $\calP$, once again, be a binary quadratic operad. We consider the suboperad of $\calP$ generated by all operations of arity $m$; experts in classical theory of identities in algebras would probably call algebras over this operad  \emph{$m$-tuple systems of type $\calP$}. (See, for instance, the work of Jacobson \cite{Jac49}, who used the term ``triple systems'' for this construction in the case of Lie and Jordan algebras.)

In this paper, we demonstrate that these two constructions are, under
some assumptions, related by Koszul duality for operads. In modern
terms, the second construction generalises the construction of
Veronese powers which is well known in the cases of graded associative
algebras (where it is known to improve homological properties).
Thus, one of the main slogans for this paper is ``Veronese
powers are Koszul dual of purifications of minimal models''.

There are many instances in the available literature where ``ungraded'' homotopy algebras are used (e.g. the homotopy Lie algebra identities are imposed for an operation of degree zero on an ungraded vector space). In our previous paper \cite{DMR16}, we already demonstrated that in the case of homotopy associative algebras it leads to extra relations and worse homotopical properties of the corresponding operad. In this paper, we prove an analogous result for the case of homotopy Lie algebras, which in particular leads to an example of a non-Koszul operad with a generator of arity three for which the inverse of the Poincar\'e series has non-negative coefficients; this is a version of a counterexample exhibited in \cite{DMR16} in the nonsymmetric case.

\subsection*{Organisation of the paper} The paper is organised as
follows. In Section \ref{sec:recoll}, we recall the key relevant
definitions of the theory of operads.  In Section
\ref{sec:VeronesePowers}, we define Veronese powers of weight graded
operads, prove some results about them, and provide counterexamples to
some celebrated properties of Veronese powers of associative
algebras. In Section \ref{sec:Identities}, we relate our work to research of
polynomial identities, both classical and recent. 
In Section \ref{sec:PureHomotopy}, we define operads 
for
pure
homotopy $\calP$-algebras, and explain how they are related to
Veronese powers by Koszul duality.
Finally, in Section~\ref{sec:mockLieInfty}, 
we demonstrate that the ungraded versions of strong
homotopy Lie
algebras sometimes used in the literature possess worse homotopical
properties than the strong homotopy Lie algebras obtained from the minimal
model of the Lie operad, and show that those algebras provide
an economic example where the positivity criterion for Poincar\'e
series does not settle the matter of non-Koszulness.

\subsection*{Acknowledgements} Some of the key results of this paper were obtained in CINVESTAV (Mexico City); the first author and the second author wish to thank that institution for the excellent working conditions. The authors are also grateful to Murray Bremner for his interest in our work (he communicated to us \cite{Bremner} that arrived at the same definition of Veronese powers independently, when trying to define what an $n$-ary algebra over an operad $\calP$ is), and to Eric Hoffbeck on comments on the first draft of the paper. 

\section{Recollections}\label{sec:recoll}

Throughout this paper, we follow the notational conventions set out in the monographs \cite{BrDo,LoVa}.  All the results of this paper are valid for vector spaces and chain complexes over an arbitrary field $\bbk$ of characteristic zero. We use the notation $\calX\cong\calY$ for isomorphisms, and the notation $\calX\simeq\calY$ for weak equivalences (quasi-isomorphisms).  

To handle suspensions of chain complexes, we introduce an element~$s$
of degree~$1$, and define, for a graded vector space~$V$, its
suspension $sV$ as $\bbk s\otimes V$. The endomorphism operad
$\End_{\bbk s}$ is denoted by $\calS$. For an operad $\calP$, its
operadic suspension is the Hadamard tensor product
$\calS\hadam\calP$. (We must warn the
  reader that in the literature the terms ``operadic suspension'' and ``operadic desuspension''
  are sometimes used in the opposite way.) 

\subsection{Three kinds of operads: weight gradings, compositions, free objects}

At various points in this paper, we use all three kinds of operads discussed in \cite{BrDo,LoVa}, that is nonsymmetric, symmetric, and shuffle operads. Our primary focus is on symmetric operads, but we have to use shuffle operads whenever operadic Gr\"obner bases are necessary or a convenient choice of a basis is required, and nonsymmetric operads for purposes of economic counterexamples. 

We require all operads in this paper to be (nonnegatively) weight
graded: this means that every component $\calP(n)$ admits a direct sum
decomposition $\calP(n)=\bigoplus_{k\ge 0}\calP(n)_{(k)}$ into
components of 
weight $k$ for various $k\ge 0$ for which any operad composition of homogeneous elements of certain weights is a homogeneous element whose weight is the sum of the weights. In addition, we assume all operads reduced ($\calP(0)=0$) and connected ($\calP(1)_{(0)}=\bbk$ and $\calP(n)_{(0)}=0$ for $n>1$). A connected operad is automatically augmented, and we denote by $\overline{\calP}$ the augmentation ideal of $\calP$.

Let us remark that each operad $\calP$ has one obvious weight grading where
 \[
\calP(n)_{(n-1)}=\calP(n) \text{ for } n \geq 1  \text{ and }
 \calP(n)_{(k)} =0 \text{ otherwise }.
 \]
For most commonly considered operads, those generated by binary operations subject to ternary relations, this grading is the most convenient one to use; in particular, it is the grading for which the relations are quadratic. However, beyond the binary generated operads, other weight gradings are occasionally more appropriate.

Each of the three kinds of operads we consider has its \emph{composition
products}. In general, for 
 \[
\alpha\in\calP(k),  \beta_1\in\calP(n_1), \ldots, \beta_k\in\calP(n_k)
 \]
and for a set partition $\pi$ of the
form $\{1,\ldots,n_1+\cdots+n_k\}=I^{(1)}\sqcup\cdots\sqcup I^{(k)}$
with $|I^{(j)}|=n_j$, 
the composition product
$\gamma_\pi(\alpha;\beta_1,\ldots,\beta_k)$ is defined. The only
difference between the three kinds of operads is in the types of
partitions permitted: in the case of nonsymmetric operads, only the
partition with
 \[
I^{(j)}=\{n_1+\cdots+n_{j-1}+1,\ldots,n_1+\cdots+n_{j}\}
 \]
is allowed,
in the case of symmetric operads, all partitions are allowed, and in
the case of shuffle operads, only the partitions with
$\min(I^{(1)})<\cdots<\min(I^{(k)})$ are allowed. 
For any kind of operads, in the case of the partition $\pi$ with
$I^{(j)}=\{n_1+\cdots+n_{j-1}+1,\ldots,n_1+\cdots+n_{j}\}$
we shall suppress $\pi$ from the subscript, as it is completely determined by the numbers
$n_j$, which in turn are completely determined by the arguments of $\gamma$: $n_j$ is the arity of $\beta_j$. 

Recall that for any kind of operads, the \emph{infinitesimal (partial) composition products} denoted $\circ_i$ are
available, such products correspond to the partitions
 \[
\{1,\ldots,n+m-1\}=\{1\}\sqcup\cdots\sqcup\{i-1\}\sqcup\{i,\ldots,i+m-1\}\sqcup\{i+m\}\sqcup\cdots\sqcup\{n+m-1\} .
 \]
In addition, for shuffle operads, we shall need \emph{infinitesimal shuffle products} $\circ_{i,\sigma}$; they correspond to the partitions 
 \[
\{1,\ldots,n+m-1\}=\{1\}\sqcup\cdots\sqcup\{i-1\}\sqcup\{i,\sigma(i+1),\ldots,\sigma(i+m-1)\}\sqcup\{\sigma(i+m)\}\sqcup\cdots\sqcup\{\sigma(n+m-1)\} ,
 \]
where $\sigma\colon\{i+1,\ldots,n+m-1\}\to\{i+1,\ldots,n+m-1\}$ is an \emph{$(m-1,n-i)$-unshuffle}, meaning that 
 \[
\sigma(i+1)<\cdots<\sigma(i+m-1) \text{ and } \sigma(i+m)<\cdots<\sigma(n+m-1) . 
 \]
Using this notion, we can define \emph{left comb products} in any shuffle operad; a left comb product of elements $x_1,\ldots, x_m$ is an element obtained from them by iterated compositions $\circ_{1,\sigma}$ where only the first slot of operations is used.

The \emph{free operad} (of each of the three kinds, where the meaning is always clarified by the surrounding context) generated by a collection
$\calX$ is denoted $\calT(\calX)$,  the \emph{cofree (conilpotent) cooperad} cogenerated by a collection $\calX$ is denoted $\calT^c(\calX)$; the
former is spanned by \emph{tree tensors}, and has its composition product given by grafting of trees, and the latter has the same underlying
collection but a different structure, a decomposition coproduct. Whenever $\calX$ is weight graded, the underlying
collection of $\calT(\calX)$ has a weight grading induced from~$\calX$. In particular, the \emph{standard} weight grading on~$\calX$
(all elements are of weight $1$) induces the \emph{standard} grading on~$\calT(\calX)$.

\subsection{Operadic Gr\"obner bases}

A very useful technical tool for dealing with operads is given by Gr\"obner bases. We refer the reader to \cite[Ch.~3, Ch.~5]{BrDo} for a systematic presentation of operadic Gr\"obner bases, and only recall the basics here. 

Similarly to associative algebras, operads can be presented via
generators and relations, that is as quotients of free operads. In
both the shuffle and the nonsymmetric case, the free operad generated
by a given nonsymmetric collection admits a basis of \emph{tree
  monomials} which can be defined combinatorially; every composition
of tree monomials is again a tree monomial. There exist several ways
to introduce a total ordering of tree monomials in such a way that the
operadic compositions are compatible with that total ordering (the
composition $\gamma_\pi$ as above, viewed as an operation with $k+1$
arguments $\alpha$, $\beta_1$, \ldots, $\beta_k$, is strictly
increasing in each argument). There is also a combinatorial definition
of divisibility of tree monomials that agrees with the naive operadic
definition: one tree monomial is a \emph{divisor} of another one if
and only if the latter can be obtained from the former by operadic
compositions. A particular case of it that we shall need is the notion of a \emph{right
  divisor}. 
A right divisor of a tree monomial $T$ is a divisor $T_1$ for which each leaf is a leaf of $T$; such a divisor really is a right divisor in that there exists a ``complementary divisor'' $T_0$ of $T$ for which $T=T_0\circ_{i,\sigma}T_1$ in the shuffle case and $T=T_0\circ_{i}T_1$ in the nonsymmetric case. 

A \emph{Gr\"obner basis} of an ideal $\calI$ of the free operad is a system $S$ of generators of~$\calI$ for which the leading tree monomial of every element of the ideal is divisible by one of the leading terms of elements of~$S$. In this case, the quotient by $\calI$ has a basis of \emph{normal tree monomials}, those not divisible by leading terms of elements of $S$. There exists an algorithmic way to compute a Gr\"obner basis starting from any given system of generators (``Buchberger's algorithm for operads''). 

Symmetric operads can be, to an extent, forced into the universe where Gr\"obner bases methods are available. For that, one uses the \emph{forgetful functor from symmetric operads to shuffle operads}. While this functor literally forgets the symmetric group actions, it does not change the underlying vector spaces, so if one wants to find a linear basis of an operad, or to prove that some vector space (of homological nature) vanishes, this is a very useful method. 

A part of the operad theory which provides one of the most useful known tools to study homological and homotopical algebra for algebras over the given operad is the Koszul duality for operads~\cite{GiKa}. A weight graded operad is said to be \emph{Koszul} if the homology of its bar complex is concentrated on the diagonal (where weight is equal to the homological degree).  If a weight graded operad is Koszul, it necessarily is quadratic, that is its defining relations are of weight two for the standard weight grading for which generators are of weight one. Proving that a given quadratic operad is Koszul instantly provides a minimal resolution for this operad, gives a description of the homology theory and, in particular, the deformation theory for algebras over that operad etc. There are a few general methods to prove that an operad is Koszul; one of the simplest and widely applicable methods is to show that a given operad has a quadratic Gr\"obner basis; this provides a sufficient (but not necessary) condition for Koszulness of an operad. In a way, non-Koszul operads can be regarded as more interesting/challenging, since standard methods of deformation theory do not work for them. 

\section{Veronese powers of operads}\label{sec:VeronesePowers}

\subsection{Na\"ive Veronese powers}

Recall the notion of Veronese powers of weight graded associative algebras: if $A=\bigoplus_{k\ge 0}A_k$, the $d$-th Veronese power of $A$, denoted by $A^{[d]}$, is the subalgebra $\bigoplus_{k\ge 0}A_{kd}$. This definition is motivated by algebraic geometry: taking the $d$-th Veronese power of the ring of polynomial functions on a vector space $V$ corresponds, under the $\mathop{\mathrm{Proj}}$ construction, to the Veronese embeddings $\mathbb{P}(V)\hookrightarrow\mathbb{P}(S^dV)$ of 
projective spaces. Veronese powers of an algebra are known to have ``better'' properties than the algebra itself, see \cite{Ba86,Ba92,EiReTo}. 

There is an obvious generalisation of this notion to the case of operads, which we call ``na\"ive Veronese powers''. 

\begin{definition}
Let $\calO$ be a weight graded operad. The \emph{na\"ive $d$-th Veronese power} $V_d(\calO)$ is the weight graded subcollection of $\calO$ defined by
 \[
(V_d(\calO))(n)=\bigoplus_{k\ge 0}\calO(n)_{(kd)}, 
 \] 
with the operad structure induced by that of $\calO$. 
\end{definition}

We proceed by justifying the adjective ``na\"ive''. Indeed, one
fundamental and easily verifiable
property that Veronese powers of associative algebras
possess is that for an algebra generated by elements of weight $1$,
its $d$-th Veronese power
is generated by elements of weight $d$. It turns out that for operads this property generally fails. 

\begin{proposition}\label{prop:NaiveVeronese}
Let $\calT=\calT(\calX)$ be the free operad generated by one commutative binary operation $\mu$. Then for all $d\ge 2$ the operad $V_d(\calT)$ is not generated by $\calT_{(d)}$. 
\end{proposition}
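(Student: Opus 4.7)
The plan is to produce, for each $d \ge 2$, a single element of $V_d(\calT)$ that cannot be written as any iterated operadic composition of elements of $\calT_{(d)}$.

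First I would identify the ingredients. In the free operad $\calT = \calT(\calX)$ on one binary generator $\mu$, the weight of an element equals the number of copies of $\mu$ used to build it, which for a binary tree tensor is the number of internal vertices, i.e.\ one less than the arity. Hence $\calT_{(d)} = \calT(d+1)$, and its elements are represented by binary trees with $d+1$ labelled leaves. A composition of $k$ such trees produces a binary tree of arity $k(d+1)-(k-1) = kd+1$ and weight $kd$, matching the arities that appear in $V_d(\calT)$.

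The main structural observation, which I would prove by a direct induction on $k$, is that every element $T\in\calT$ obtained by operadic composition of $k \ge 1$ elements of $\calT_{(d)}$ has at least one internal vertex at which the rooted subtree has exactly $d+1$ leaves. The base case $k=1$ is trivial. In the inductive step, write $T = T' \circ_i \tau$ with $\tau \in \calT_{(d)}$: the subtree of $T$ occupying the former position of leaf $i$ of $T'$ is exactly $\tau$, and thus has $d+1$ leaves. Consequently, to prove the proposition it suffices to exhibit for each $d \ge 2$ a binary tree $T \in \calT(2d+1)$ all of whose rooted subtrees avoid the size $d+1$.

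My candidate is $T_d = \mu(x_1,\mu(A,B))$, where $A$ is any binary tree on the $d$ leaves $\{2,\ldots,d+1\}$ and $B$ is any binary tree on the $d$ leaves $\{d+2,\ldots,2d+1\}$. Every rooted subtree of $A$ or of $B$ has size in $\{1,\ldots,d\}$, while the only remaining rooted subtrees of $T_d$ are the vertex carrying $\mu(A,B)$ (size $2d$) and the root itself (size $2d+1$). For $d\ge 2$ one has $d < d+1 < 2d$, so the size $d+1$ is not attained, and combining this with the structural observation above yields the conclusion. I do not foresee a real obstacle beyond setting up the tree/subtree bookkeeping carefully; note that $d \ge 2$ is genuinely needed, since for $d=1$ one would have $2d = d+1$ and the vertex $\mu(A,B)$ would itself supply a subtree of the forbidden size.
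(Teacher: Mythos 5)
Your proof is correct and takes essentially the same route as the paper: your $T_d=\mu(x_1,\mu(A,B))$ is (by commutativity of $\mu$) the same shape of tree monomial as the paper's $\nu=\gamma(\mu^{(2)};\mu^{(d-1)},\mu^{(d-1)},\mathrm{id})$, and your structural observation about rooted subtrees with $d+1$ leaves is exactly the ``direct inspection'' via right divisors that the paper invokes (and spells out in the Remark following Proposition~\ref{prop:VeroneseFree}). Nothing further is needed.
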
 

\begin{proof}
  Let us denote by $\mu^{(j)}$ the iterated $j$-fold first slot
  composition of $\mu$: $\mu^{(0)}=\mathop{\mathrm{id}}$,
  $\mu^{(j+1)}=\mu^{(j)}\circ_1\mu$;
 note that
  $\mu^{(j)}\in\calT_{(j)}$. Then the element
 \[
\nu=\mu^{(d+1)}\circ_{d+1}\mu^{(d-1)}\in\calT_{(2d)}
 \]
cannot be represented as a partial composition of two elements from $\calT_{(d)}$, which is easy to see by direct inspection if we write this element as $\gamma(\mu^{(2)};\mu^{(d-1)},\mu^{(d-1)},\mathop{\mathrm{id}})$.
\end{proof}

Let us remark that in the case of operads generated by binary
operations (which is the case considered most often) the standard
weight grading is the ``arity minus one'' grading mentioned in Section~\ref{sec:recoll}, and so the na\"ive Veronese
power $V_d(\calO)$ is the suboperad on the components of $\calO$ for
all arities $n\equiv 1\pmod d$.
One can consider such suboperads in
general, but they seem to possess absolutely no notable properties
(and so are ``very na\"ive Veronese powers'').

\medskip 

Below we shall propose a more meaningful definition of Veronese
powers. As a preparation to that general definition, we first recall
the three classical notions of ``triple systems'', which correspond to
the ``Three Graces of the operad theory'' (an expression coined by Jean-Louis Loday),
the associative operad $\Ass$, the commutative associative operad $\Com$, and the operad $\Lie$ of Lie algebras. 

\subsection{Classical definitions of triple systems}

The philosophy of triple systems, going back to \cite{Jac49}, can be expressed by saying that, for a bilinear operation $a_1,a_2\mapsto a_1\star a_2$, the natural trilinear operations $a_1,a_2,a_3\mapsto (a_1\star a_2)\star a_3$ and $a_1,a_2,a_3\mapsto a_1\star (a_2\star a_3)$ are of their own merit.  (In the cases of $\Ass$, $\Com$, and $\Lie$, it is enough to take just one of those, as they can be expressed through one another using the defining relations and the symmetric group action.)

\begin{definition}[{\cite{Lis71}}]\label{def:TotallyAssocTriple}
A \emph{ternary ring}, or a \emph{totally associative triple system of
  the first type} is a vector space~$V$ with a trilinear 
operation 
 $
(-,-,-)\colon V^{\times 3}\to V
 $
satisfying the properties
\begin{equation}
((a_1,a_2,a_3),a_4,a_5)=(a_1,(a_2,a_3,a_4),a_5)=(a_1,a_2,(a_3,a_4,a_5)) .\label{eq:tass1}
\end{equation}
\end{definition}

The triple product $(a_1,a_2,a_3)=a_1a_2a_3$ in each associative algebra satisfies these identities; it is also easy to show that every multilinear identity satisfied by all those particular examples follows from \eqref{eq:tass1}.

\begin{definition}[{\cite{Lis71}}]\label{def:TotallyComTriple}
A \emph{ternary commutative ring}, or a \emph{totally commutative associative triple system} is a vector space~$V$ with a trilinear operation 
 $
(-,-,-)\colon V^{\times 3}\to V
 $
satisfying the properties
\begin{gather}
(a_1,a_2,a_3)=(a_2,a_1,a_3)=(a_1,a_3,a_2),\label{eq:tcomsym}\\
((a_1,a_2,a_3),a_4,a_5)=(a_1,(a_2,a_3,a_4),a_5)=(a_1,a_2,(a_3,a_4,a_5)) .\label{eq:tcomass}
\end{gather}
\end{definition}

The triple product $(a_1,a_2,a_3)=a_1a_2a_3$ in each commutative associative algebra satisfies these identities; it is also easy to show that every multilinear identity satisfied by all those particular examples follows from \eqref{eq:tcomsym} and \eqref{eq:tcomass}.

\begin{definition}[{\cite{Jac49,Jac68}}]\label{def:LieTriple}
A \emph{Lie triple system} is a vector space~$V$ with a trilinear
operation
 \[
[-,-,-]\colon V^{\times 3}\to V
 \]
satisfying the properties
\begin{gather}
[a_1,a_2,a_3]=-[a_2,a_1,a_3] ,\label{eq:ltsym1}\\
[a_1,a_2,a_3]+[a_2,a_3,a_1]+[a_3,a_1,a_2]=0 ,\label{eq:ltsym2}\\
[a_1,a_2,[a_3,a_4,a_5]]=[[a_1,a_2,a_3],a_4,a_5]+[a_3,[a_1,a_2,a_4],a_5]+[a_3,a_4,[a_1,a_2,a_5]] .\label{eq:ltder}
\end{gather}
\end{definition}
The triple product $[a_1,a_2,a_3]:=[[a_1,a_2],a_3]$ in any Lie algebra satisfies these identities. It is also known \cite{Jac68} that every identity satisfied by the operation $[[a_1,a_2],a_3]$ is a consequence of \eqref{eq:ltsym1}, \eqref{eq:ltsym2}, and \eqref{eq:ltder} (note that the list of identities in the paper \cite{Jac49} where Lie triple systems were first defined contains some redundant ones). We shall re-prove this result below using Gr\"obner bases for operads.  

\subsection{Veronese powers}

To ensure the property of Veronese powers which we observed to fail, the most natural way is to enforce it, which, in a sense, is exactly what the classical definition of triple systems accomplishes. 

\begin{definition}
Let $\calO$ be a weight graded operad. The \emph{$d$-th Veronese power} $\calO^{[d]}$ is the suboperad  of $\calO$ generated by $\calO_{(d)}$.
\end{definition}

To demonstrate that this definition possesses many reasonable properties, we start by computing Veronese powers of free operads. 

\begin{proposition}\label{prop:VeroneseFree} 
Let $\calT(\calX)$ be the free operad generated by $\calX$ which we
equip with the standard weight grading with $\calX$ placed in weight one.
Consider also 
the operad $\calT(\calT(\calX)_{(d)})$ which we equip with the weight grading where the weight of the generators $\calT(\calX)_{(d)}$ is equal to $d$.
We have an isomorphism of weight graded collections
 \[
\calT(\calX)^{[d]}\cong\calT(\calT(\calX)_{(d)}) .
 \] 
In other words, Veronese powers of free operads are freely generated by elements of the lowest positive~weight. 
\end{proposition}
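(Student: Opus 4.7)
The plan is to construct, by the universal property of the free operad, the operad morphism $\varphi \colon \calT(\calT(\calX)_{(d)}) \to \calT(\calX)$ extending the inclusion $\calT(\calX)_{(d)} \hookrightarrow \calT(\calX)$. With each generator of the source placed in weight $d$, this morphism respects the weight gradings: a tree tensor with $k$ internal vertices labeled by weight-$d$ elements maps to a tree tensor of weight $kd$. The image of $\varphi$ contains $\calT(\calX)_{(d)}$, is a sub-operad of $\calT(\calX)$, and is generated by $\calT(\calX)_{(d)}$, so it coincides with $\calT(\calX)^{[d]}$ by definition. Hence $\varphi$ is surjective onto the Veronese power.

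For injectivity I would work with the canonical basis of tree tensors. A basis element of $\calT(\calT(\calX)_{(d)})$ is a rooted tree $S$ whose internal vertices carry labels from $\calT(\calX)_{(d)}$; under $\varphi$ it becomes the single tree tensor obtained by substituting each label into its position in $S$. The resulting tree $T$ inherits from this construction a canonical partition of its internal vertex set into $k$ blocks of size $d$, one for each vertex of $S$, each block forming a connected sub-tree of $T$. From such a partition one recovers $S$ (by contracting each block to a vertex) and each label (as the induced sub-tree on a block). Thus injectivity of $\varphi$ reduces to the combinatorial claim that a rooted tree with $kd$ internal vertices admits at most one partition of its vertex set into $k$ connected sub-trees of size $d$.

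I would prove the claim by induction on $k$. Let $P, P'$ be two such partitions of $T$ with root components $C$ and $C'$ respectively. If $C = C'$, then $T\setminus C$ is a forest whose components are smaller rooted trees each inheriting a valid size-$d$ partition from $P$ and from $P'$, so the inductive hypothesis applies. Suppose instead $C \ne C'$ and choose $v \in C\setminus C'$. Walking from the root of $T$ along the path to $v$, let $u'$ be the last vertex lying in $C'$ and let $u$ be its successor on the path. Then $u,u' \in C$, while the edge $u'u$ is cut in $P'$ but not in $P$. The subtree $T_u$ of $T$ rooted at $u$ is therefore entirely partitioned by $P'$ inside itself, so $d \mid |T_u|$; on the other hand $P$ partitions $T_u$ as $(C\cap T_u) \sqcup (\text{size-}d\ \text{blocks})$, because any $P$-block meeting $T_u$ other than $C$ cannot escape $T_u$ (it would have to pass through $u\in C$). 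Consequently $|C\cap T_u|$ is a positive multiple of $d$ bounded by $|C|=d$, forcing $C\subseteq T_u$; but the root of $T$ lies in $C$ and not in $T_u$, a contradiction.

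The main obstacle is precisely this combinatorial uniqueness statement; the remainder of the proof is essentially formal. Once the partition is seen to be unique, the inverse of $\varphi$ is well defined on the tree-tensor basis, and the desired isomorphism of weight graded collections, which is automatically an isomorphism of operads, follows.
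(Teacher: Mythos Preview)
Your argument is correct. Both your proof and the paper's reduce the question to the uniqueness of a decomposition of a tree monomial into weight-$d$ connected pieces, but the inductions run in opposite directions. The paper works explicitly with shuffle operads and peels the decomposition off from the leaves: it introduces the set $R_d(T)$ of \emph{right divisors} of $T$ with $d$ internal vertices (these are the full subtrees $T_v$ with exactly $d$ internal vertices), observes that any two of them are disjoint (each is determined by a leaf it contains together with its size, since subtrees containing a fixed leaf are nested), removes this bottom layer, and inducts on the smaller tree that remains. You instead peel from the root: you show that the block containing the root is forced, remove it, and induct on the resulting forest. The paper's approach has the advantage of identifying the canonical bottom layer $R_d(T)$ intrinsically from $T$, which also yields the membership test for $\calT(\calX)^{[d]}$ recorded in the remark following the proposition; your root-block argument is a clean, self-contained piece of tree combinatorics that avoids the right-divisor formalism altogether. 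One small point worth making explicit: your appeal to ``the canonical basis of tree tensors'' is unproblematic in the shuffle or nonsymmetric setting, where tree monomials genuinely form a basis and grafting sends basis elements to basis elements; the paper states this reduction explicitly and then recovers the symmetric case via the forgetful functor, and you should do the same.
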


\begin{proof}
Let us prove this for the case of shuffle operads. The case of
nonsymmetric operads is analogous (since such an operad also has a
combinatorial basis of tree monomials), and the case of symmetric
operads follows from the shuffle case via the forgetful functor.

The free shuffle operad $\calT(\calX)$ has a basis of tree monomials. By definition, the operad $\calT(\calX)^{[d]}$ is generated by $\calT(\calX)_{(d)}$. 
To prove the freeness, it is enough to show that if a~tree monomial can at all be decomposed as an iterated composition of tree monomials of weight $d$, this can be
done in an essentially unique way. 

For a tree monomial $T$, let us denote denote by $R_d(T) = \{R_1,\ldots,R_s\}$ the (possibly empty)
set of all different right divisors of $T$ with $d$ vertices. We claim that the elements in $R_d(T)$ are disjoint. 
Indeed, if $R_p$ and $R_q$ are not disjoint, they must clearly share a common leaf, say  $j$. As there is an obvious one-to-one
correspondence between the vertices of $T$ on the unique path connecting $j$ to the root and  right divisors containing $j$ as a
leaf, we conclude that $R_p=R_q$. 

Let us consider a tree monomial $T\in\calT(\calX)$. If it belongs to $\calT(\calT(\calX)_{(d)})$, the set  $R_d(T)$ is
non-empty. Indeed, if this is so, then $T$ is the result of iterated infinitesimal shuffle compositions 
 of some tree monomials $T_1,\ldots,T_u \in \calT(\calX)_{(d)}$. The tree monomial $T_u$ is a
right divisor of $T$ with $d$ vertices, i.e.\ one belonging to $R_d(T)$.    

For each tree monomial $T\in \calT(\calX)$ belonging to  $\calT(\calT(\calX)_{(d)})$, 
its underlying tree $T$ therefore determines uniquely a~nonempty set  $R_d(T) = \{R_1,\ldots,R_s\}$ 
of its (disjoint)  right divisors with $d$ vertices. We may factor out in an essentially
unique  (i.e.~modulo the axioms of operads) way the tree monomials
$R_1,\ldots,R_s$ from $T$, and induction applies. This establishes a bijection between the basis
elements of the collections $\calT(\calX)^{[d]}$ and $\calT(\calT(\calX)_{(d)})$, thus completing the proof.
\end{proof}

\begin{remark}
  The idea of the proof of Proposition~\ref{prop:VeroneseFree} offers
  an iterative test whether a~tree monomial $T\in \calT(\calX)$
  belongs to $\calT(\calT(\calX)_{(d)})$ or not. Namely, one should
  inspect the set $R_d(T)$ of its right divisors with $d$ vertices. If
  $R_d(T) = \emptyset$, then $T \not \in
  \calT(\calT(\calX)_{(d)})$. If $R_d(T)\not = \emptyset$, remove the
  trees in $R_d(T)$ from $T$ and repeat the same procedure for the
  subtree of $T$ obtained this way. For instance, for the underlying
  tree of the tree monomial $\nu$ used in the proof of
  Proposition~\ref{prop:NaiveVeronese} one easily sees that $R_d(T)=
  \emptyset$.
\end{remark}

\subsection{A criterion for Veronese powers to coincide with na\"ive ones}

As we already mentioned, the na\"ive Veronese power differs from the Veronese power as defined above in the case of the free operad.  However, in many classical cases we have $V_d(\calO)=\calO^{[d]}$, and we present one useful criterion for that.

\def\vladimirsdot{\hbox{\hskip .12em .\hskip .12em}}
\begin{proposition}\label{prop:LeftNormed}
Suppose that $\calO$ is a weight graded operad generated by operations
$\mu_1,\ldots,\mu_m$ of weight one for which every weight graded component $\calO_{(k)}$ is spanned
by symmetric groups orbits of left comb products. Then $V_d(\calO)=\calO^{[d]}$ for any $d$.
\end{proposition}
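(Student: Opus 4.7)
The inclusion $\calO^{[d]} \subseteq V_d(\calO)$ is automatic and requires no hypothesis: the operad $\calO^{[d]}$ is generated by elements of weight $d$, and operadic compositions add weights, so every element of $\calO^{[d]}$ has weight divisible by $d$. The substantial content is the reverse inclusion, which reduces to showing $\calO_{(nd)} \subseteq \calO^{[d]}$ for every $n \geq 0$; I would prove this by induction on $n$, the case $n \le 1$ being trivial.

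For the inductive step, the plan is to exploit the spanning hypothesis. Since $\calO^{[d]}$ is a symmetric suboperad of $\calO$, it is closed under the symmetric group actions, so it suffices to show that every left comb product of $nd$ generators $\mu_{i_1}, \ldots, \mu_{i_{nd}}$ belongs to $\calO^{[d]}$. Write such a left comb product schematically as
\[
L(\mu_{i_1}, \ldots, \mu_{i_{nd}}) = (\cdots((\mu_{i_1} \circ_{1,\sigma_1} \mu_{i_2}) \circ_{1,\sigma_2} \mu_{i_3}) \cdots ) \circ_{1,\sigma_{nd-1}} \mu_{i_{nd}}.
\]
The key combinatorial observation is a \emph{regrouping identity}: this can be rewritten in an essentially unique way as a left comb product of $n$ blocks
\[
L(\mu_{i_1}, \ldots, \mu_{i_{nd}}) = L(y_1, y_2, \ldots, y_n),
\qquad
y_j := L(\mu_{i_{(j-1)d+1}}, \ldots, \mu_{i_{jd}}),
\]
where the outer composition is again formed by iterated $\circ_{1,\tau}$ operations, now with the $y_j$'s (of positive arity) as inputs, and the $\tau$'s are determined from the $\sigma$'s by the sequential composition axiom in the shuffle operad. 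This identity amounts to saying that the tree underlying $L(\mu_{i_1},\ldots,\mu_{i_{nd}})$, namely a single left-leaning spine with $nd$ internal vertices, can be ``sliced'' into consecutive blocks of $d$ vertices along the spine, and grafting these blocks onto one another via the first slot reconstructs the original tree.

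Granting this identity, each inner block $y_j$ is a left comb product of $d$ generators, hence lives in $\calO_{(d)} \subseteq \calO^{[d]}$, while the outer composition expresses $L(\mu_{i_1}, \ldots, \mu_{i_{nd}})$ as an iterated partial composition of weight-$d$ elements. Thus $L(\mu_{i_1}, \ldots, \mu_{i_{nd}}) \in \calO^{[d]}$, completing the induction.

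The only real obstacle is a careful verification of the regrouping identity in the shuffle setting, where one must track the shuffle permutations produced by nested $\circ_{1,\sigma}$ compositions. I would handle this by working first at the level of the free operad, where both sides are literally the same tree monomial up to the relabeling of leaves prescribed by the operadic sequential axiom; in the nonsymmetric case this is a direct rebracketing, and the symmetric/shuffle case follows by the forgetful functor together with the standard bookkeeping of unshuffles. Once the identity holds in the free operad, it descends to $\calO$.
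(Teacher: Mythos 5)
Your proposal is correct and is essentially the paper's argument: both rest on the rebracketing identity that a left comb of $nd$ generators factors as a composition of left combs of $d$ generators in the first slot, combined with the observation that the suboperad $\calO^{[d]}$ is closed under the symmetric group action so the spanning hypothesis suffices. The only cosmetic difference is that the paper peels off a single block of $d$ from the top and inducts, whereas you regroup into all $n$ blocks at once and spend more care on the shuffle bookkeeping, which the paper suppresses by writing the identity with plain $\circ_1$.
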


\begin{proof}
Indeed,
 \[
(\ldots (\mu_{i_1}\circ_1\mu_{i_2})\circ_1\ldots)\circ_1\mu_{i_k}=((\ldots (\mu_{i_1}\circ_1\mu_{i_2})\circ_1\ldots)\circ_1\mu_{i_d})\circ_1((\ldots (\mu_{i_{d+1}}\circ_1\mu_{i_{d+2}})\circ_1\ldots)\circ_1\mu_{i_k}) ,
 \]
so induction applies.
\end{proof}

Let us mention a class of examples defined in \cite{DoKh} for which the result we just proved allows us to relate Veronese powers of associative algebras to Veronese powers of operads. 

\begin{definition}[{\cite{DoKh}}]\label{def:ComAlgOp}
Let $A$ be a weight graded commutative associative algebra. We define a symmetric collection which we denote $\calO_A$ by putting $\calO_A(n)=A_{n-1}$ with the trivial symmetric group action, and define composition maps
 \[
\gamma\colon\calO_A(k)\otimes\calO_A(i_1)\otimes\cdots\otimes\calO_A(i_k)\to\calO_A(i_1+\cdots+i_k)
 \]
using the product
 \[
A_{k-1}\otimes A_{i_1-1}\otimes\cdots\otimes A_{i_k-1}\to A_{(k-1)+(i_1-1)+\cdots+(i_k-1)}=A_{i_1+\cdots+i_k-1} 
 \]
in the algebra $A$. 
\end{definition}

In \cite{DoKh}, it is proved that these composition maps of $\calO_A$ define an operad. 

\begin{proposition}\label{prop:VeroneseAndComAlgOp}
Suppose that $A$ is a weight graded commutative associative algebra
generated by elements of weight one. Then for each $d\ge 1$ we have
 \[
\calO_{A^{[d]}}=(\calO_A)^{[d]}=V_d(\calO_A) .
 \]
\end{proposition}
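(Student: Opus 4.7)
The plan is to combine Proposition~\ref{prop:LeftNormed} with a direct componentwise identification. For the equality $V_d(\calO_A) = (\calO_A)^{[d]}$, I would apply Proposition~\ref{prop:LeftNormed}. If $\mu_1, \ldots, \mu_m$ generate $A$ in weight one, they lie in $\calO_A(2) = A_1$ and generate $\calO_A$ as weight-one binary operations. Since composition in $\calO_A$ is performed via multiplication in $A$, an iterated left-comb composition of these operations is equal to the corresponding monomial $\mu_{i_1}\cdots\mu_{i_k}\in A_k = \calO_A(k+1)$; such monomials span $A_k$, and the symmetric group acts trivially on $\calO_A$, so the spanning hypothesis of Proposition~\ref{prop:LeftNormed} is satisfied.

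For the equality $V_d(\calO_A) = \calO_{A^{[d]}}$, I would simply unwind the definitions using the weight grading on $\calO_A$ inherited from $A$, under which $\calO_A(n) = A_{n-1}$ lies entirely in weight $n-1$. Then
\[
V_d(\calO_A)(n) = \bigoplus_{k\ge 0} \calO_A(n)_{(kd)}
\]
equals $A_{n-1}$ when $d\mid (n-1)$ and vanishes otherwise, which coincides with $(A^{[d]})_{n-1} = \calO_{A^{[d]}}(n)$ once $A^{[d]} = \bigoplus_{k \ge 0} A_{kd}$ carries the weight grading inherited from $A$. The operad structures on both sides are induced by multiplication in $A$, restricted to the subalgebra $A^{[d]}$ in one case, so they agree and the identification is a genuine operad isomorphism.

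The proof itself is very short; the only delicate issue, and really only a matter of convention, is that one must equip $A^{[d]}$ with the weight grading inherited from $A$ (so that the positive weights of $A^{[d]}$ are the positive multiples of $d$), rather than the re-indexed grading sometimes used in projective algebraic geometry. With that convention fixed, the only nontrivial ingredient is the applicability of Proposition~\ref{prop:LeftNormed}, which is itself immediate from commutativity of $A$ and the hypothesis that $A$ is generated in weight one.
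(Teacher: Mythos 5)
Your proposal is correct and follows essentially the same route as the paper: the first equality via Proposition~\ref{prop:LeftNormed} (the paper likewise observes that first-slot compositions span $\calO_A$), and the second by unwinding definitions. Your extra care about verifying the left-comb hypothesis and about keeping the weight grading on $A^{[d]}$ inherited from $A$ (rather than re-indexed) is a welcome elaboration of what the paper dismisses as ``obvious from the definition,'' but it is not a different argument.
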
 

\begin{proof}
The equality $(\calO_A)^{[d]}=V_d(\calO_A)$ follows from Proposition
\ref{prop:LeftNormed} since the first slot compositions span
$\calO_A$. The equality $V_d(\calO_A)=\calO_{A^{[d]}}$ is obvious from the definition. 
\end{proof}

\subsection{Koszulness of\texorpdfstring{\,}{} Veronese powers}

Let us demonstrate that in a large class of examples Veronese powers of operads have good homotopical properties. We start with a result that immediately connects to Proposition \ref{prop:VeroneseAndComAlgOp}.

\begin{proposition}\label{prop:HighVeroneseComAlgOp}
Suppose that $A$ is a weight graded commutative associative
algebra generated by elements of weight one. Then for all $d$ big enough 
the operad $V_d(\calO_A)$ is Koszul. 
\end{proposition}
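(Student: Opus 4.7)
The plan is to chain together three ingredients: the identification $V_d(\calO_A)\cong\calO_{A^{[d]}}$ from Proposition~\ref{prop:VeroneseAndComAlgOp}, Backelin's classical theorem on Koszulness of high Veronese powers of commutative algebras, and the preservation of Koszulness under the functor $A\mapsto\calO_A$ of Definition~\ref{def:ComAlgOp}.

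First I would reduce the problem. By Proposition~\ref{prop:VeroneseAndComAlgOp} we have $V_d(\calO_A)\cong\calO_{A^{[d]}}$, and under the natural regrading (placing the original weight $kd$ component of $A$ in new weight $k$) the algebra $A^{[d]}$ is weight graded, commutative, and generated in weight one, so $\calO_{A^{[d]}}$ makes sense as a weight graded operad. It therefore suffices to show that $\calO_{A^{[d]}}$ is Koszul for all sufficiently large~$d$.

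Next I would invoke Backelin's theorem \cite{Ba86} (or its quantitative refinement by Eisenbud--Reeves--Totaro \cite{EiReTo}): for every finitely generated graded commutative algebra $A$ there exists a threshold $d_0$ such that $A^{[d]}$ is Koszul as an associative algebra for every $d\geq d_0$; a standard direct limit argument eliminates the finite generation hypothesis if needed. Combined with the assertion that Koszulness of a weight graded commutative algebra $B$ implies Koszulness of $\calO_B$, a fact I would extract from \cite{DoKh}, this gives Koszulness of $\calO_{A^{[d]}}=V_d(\calO_A)$ for all $d\geq d_0$, which is the desired conclusion.

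The main obstacle is the last implication, $B$ Koszul $\Rightarrow$ $\calO_B$ Koszul. If this is not stated verbatim in \cite{DoKh}, I would verify it by directly comparing the operadic bar complex of $\calO_B$ with the algebraic bar complex of $B$: the formula $\calO_B(n)=B_{n-1}$ and the fact that operadic composition is built out of the multiplication on $B$ strongly suggest that the Koszul dual cooperad of $\calO_B$ is the ``operadification'' of the Koszul dual coalgebra of $B$, so that the vanishing of the off-diagonal operadic Tor groups reduces componentwise to the corresponding algebraic vanishing. While this comparison is essentially formal bookkeeping, it is the step whose execution would require the most care.
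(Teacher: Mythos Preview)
Your proposal is correct and follows essentially the same three-step route as the paper: identify $V_d(\calO_A)\cong\calO_{A^{[d]}}$ via Proposition~\ref{prop:VeroneseAndComAlgOp}, invoke Backelin (and Eisenbud--Reeves--Totaro) for Koszulness of $A^{[d]}$ for $d\gg0$, and then transfer Koszulness along $B\mapsto\calO_B$. The only adjustment is that the implication ``$B$ Koszul $\Rightarrow$ $\calO_B$ Koszul'' is established in \cite{DoKh2} rather than \cite{DoKh}, so you can simply cite that result instead of rederiving the bar-complex comparison yourself.
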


\begin{proof}
By \cite{Ba86,EiReTo,PP}, for any weight graded commutative
associative algebra $A$, its Veronese power $A^{[d]}$ is Koszul for
all $d\gg 0$. On the other hand, by \cite{DoKh2}, the operad $\calO_B$
is Koszul whenever the algebra $B$ is Koszul, so the operad
$\calO_{A^{[d]}}$ is Koszul. 
To conclude the proof, we use the isomorphisms of Proposition~\ref{prop:VeroneseAndComAlgOp}. 
\end{proof}

Our next result produces another class of examples where Veronese powers are Koszul. It is similar to a known statement about associative algebras, see \cite[Prop. 4.4.3]{PP}.

\begin{proposition}\label{prop:PBWVeronese}
Let $\calO$ be a weight graded operad generated by elements of weight one. Assume that the Gr\"obner basis $\calG$ of the ideal of relations of $\calO$ is quadratic for a certain choice of basis in the space of generators and a certain monomial ordering, and that all weight $2d$ normal tree monomials with respect to that Gr\"obner basis are in $\calT(\calX)^{[d]}\subset\calT(\calX)$. Then the Veronese power $\calO^{[d]}$ admits a quadratic Gr\"obner basis of relations. In particular the operad $\calO^{[d]}$ is  Koszul. 
\end{proposition}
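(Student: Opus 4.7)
The plan is to mimic the classical PBW-type argument for Veronese powers of PBW associative algebras (cf.\ \cite[Prop.~4.4.3]{PP}) in the operadic setting, producing an explicit quadratic Gr\"obner basis for the defining ideal of $\calO^{[d]}$ out of the assumed Gr\"obner basis $\calG$ of~$\calO$. I work with shuffle operads throughout; the nonsymmetric case is formally identical, and the symmetric case follows through the forgetful functor, as elsewhere in the paper. First, fix $\calY$ to be the set of images in $\calO_{(d)}$ of weight-$d$ normal tree monomials of $\calT(\calX)$ with respect to~$\calG$; this is a basis of $\calO_{(d)}$. The natural surjection $\Phi\colon\calT(\calY)\twoheadrightarrow \calO^{[d]}$ admits, via substitution of each $y\in \calY$ by its underlying $\calX$-tree monomial, an injection of tree monomials $\iota\colon\calT(\calY)\hookrightarrow\calT(\calX)^{[d]}\subseteq\calT(\calX)$ compatible with operadic composition (using Proposition~\ref{prop:VeroneseFree}). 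Pulling the given monomial ordering on $\calT(\calX)$ back along $\iota$ yields a monomial ordering on $\calT(\calY)$, and I shall write $\tilde T=\iota(T)$ for brevity.

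For each pair $y_1,y_2\in\calY$ and each admissible composition slot $(i,\sigma)$ such that $\widetilde{y_1\circ_{i,\sigma}y_2}\in\calT(\calX)$ is \emph{not} $\calG$-normal, its reduction modulo $\calG$ produces an expression $\sum_j c_j\tilde T_j$ with each $\tilde T_j$ a $\calG$-normal weight-$2d$ tree monomial. By the hypothesis every such $\tilde T_j$ lies in $\calT(\calX)^{[d]}$; and since normality is hereditary under passage to sub-tree-monomials, the unique decomposition (Proposition~\ref{prop:VeroneseFree}) of $\tilde T_j$ into two weight-$d$ subtrees has both subtrees in $\calY$, so $\tilde T_j=\iota(T_j)$ for a canonical weight-$2$ monomial $T_j\in\calT(\calY)$. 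Let $\calG^{[d]}$ denote the resulting set of quadratic elements $y_1\circ_{i,\sigma}y_2-\sum_j c_j T_j\in\ker\Phi$; each has leading term $y_1\circ_{i,\sigma}y_2$, since reduction via $\calG$ strictly decreases tree monomials in $\calT(\calX)$.

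The decisive claim is the following equivalence: $T\in\calT(\calY)$ is normal with respect to $\calG^{[d]}$ if and only if $\tilde T\in\calT(\calX)$ is normal with respect to~$\calG$. Here the choice of $\calY$ and the standing hypothesis enter essentially. Any weight-$2$ sub-tree monomial of $\tilde T$ consists of two adjacent internal vertices; relative to the block decomposition of $\tilde T$, either both vertices lie in a single weight-$d$ block---in which case the sub-tree monomial is automatically $\calG$-normal, because the block itself is---or else they lie in two distinct but adjacent blocks, joined by the unique ``bridge'' edge along which those blocks are grafted. Such bridges are in natural bijection with the weight-$2$ sub-tree monomials of~$T$ in $\calT(\calY)$. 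Consequently, $\tilde T$ is $\calG$-normal if and only if no bridge is a leading term of~$\calG$, equivalently, no weight-$2$ sub-tree monomial of~$T$ embeds to a non-$\calG$-normal weight-$2d$ tree, which by construction of $\calG^{[d]}$ is precisely $\calG^{[d]}$-normality of~$T$.

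The conclusion then follows from a standard dimension squeeze in each weight $k$. The number of $\calG^{[d]}$-normal tree monomials in $\calT(\calY)_{(k)}$ equals, via $\iota$, the number of $\calG$-normal weight-$kd$ tree monomials of $\calT(\calX)$ lying in $\calT(\calX)^{[d]}$; each such tree monomial is a distinct basis element of $\calO_{(kd)}$ lying in $\calO^{[d]}_{(kd)}$, and these elements are therefore linearly independent in $\calO^{[d]}_{(kd)}$. Combining this lower bound with the surjection $\calT(\calY)/(\calG^{[d]})\twoheadrightarrow\calO^{[d]}$ and the standard inequality $\dim\calT(\calY)_{(k)}/(\calG^{[d]})_{(k)}\le\dim\calT(\calY)_{(k)}/\langle\mathrm{LT}(\calG^{[d]})\rangle_{(k)}$, all three dimensions coincide, forcing $\calG^{[d]}$ to be a Gr\"obner basis of $\ker\Phi$. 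Quadraticity holds by construction, and the Koszulness of $\calO^{[d]}$ follows from the standard criterion that operads admitting a quadratic Gr\"obner basis are Koszul. I expect the principal technical obstacle to be the normality equivalence in the third paragraph: it rests on the combinatorial dichotomy---intra-block versus bridging---for weight-$2$ sub-tree monomials, together with the careful deployment of the hypothesis, precisely the content that keeps the reductions defining $\calG^{[d]}$ inside $\iota(\calT(\calY))$.
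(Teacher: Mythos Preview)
Your proof is correct and follows essentially the same approach as the paper's own argument: the same choice of $\calY$ as weight-$d$ normal monomials, the same embedding $\calT(\calY)\hookrightarrow\calT(\calX)$ with the pulled-back ordering, and the same key equivalence---$T$ is normal with respect to the new quadratic system iff $\tilde T$ is $\calG$-normal---proved via the intra-block/bridging dichotomy for weight-$2$ divisors. The only cosmetic differences are that the paper describes the quadratic Gr\"obner basis as $(\calG)\cap\calT(\calY)_{(2d)}$ rather than constructing representatives by reduction, and that you spell out the concluding dimension squeeze more explicitly than the paper does.
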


\begin{proof}
Let $\calY$ be the collection of all weight $d$ normal tree monomials with respect to $\calG$.
As the collection $\calY$ is a basis of $\calO_{(d)}$, we may choose it as the collection of generators of the operad $\calO^{[d]}$. 
There is an embedding $\calT(\calY)\hookrightarrow\calT(\calX)$ obtained via 
the embedding $\calY=\calY_{(d)}\hookrightarrow\calT(\calX)_{(d)}$, the identification
$\calT(\calX)^{[d]}\cong \calT(\calT(\calX)_{(d)}))$ 
of Proposition \ref{prop:VeroneseFree}, and the embedding $\calT(\calX)^{[d]}\hookrightarrow\calT(\calX)$.
In particular, this embedding induces a monomial ordering of $\calT(\calY)$, and this is the ordering we refer to in the rest of the proof. 

We claim that the requisite quadratic Gr\"obner basis of the operad $\calO^{[d]}$ consists of all elements in
$(\calG) \cap \calT(\calY)_{(2d)}$. To prove that, let us consider all elements $S_1,\ldots, S_t$ of $\calT(\calY)_{(2d)}$ which
are normal tree monomials with respect to $\calG$ when viewed as
elements of $\calT(\calX)_{(2d)}$ via the embedding we mentioned. By our assumption on the normal
monomials with respect to $\calG$, these elements form a basis of the weight $2d$ components of $\calO^{[d]}$. The leading terms of
$(\calG) \cap \calT(\calY)_{(2d)}$ are precisely the basis elements $\calT(\calY)_{(2d)}$ different from all of the $S_j$'s.

Thus, the result would follow if we show that a tree monomial $T\in \calT(\calY)_{(rd)}$ is \emph{not} a normal
tree monomial with respect to $\calG$ (when viewed as an element of $\calT(\calX)$) if and only if 
it has a divisor of weight $2d$ different from the $S_j$'s (when viewed as an element of $\calT(\calY)$). In $\calT(\calX)$, $T$ is not normal with respect to
$\calG$ if and only if it is divisible by one of the leading terms of $\calG$, say $T_0$. Now we recall that $T\in \calT(\calY)_{(rd)}$, and note that $T_0$ cannot be a divisor of one of the elements of $\calY$ since they all are assumed normal. Also, since the weight of $T_0$ is $2$, so its occurrence overlaps with exactly two of the elements of $\calY$, which determines a divisor of weight $2d$ different from one of the $S_j$'s, thus completing the proof.
\end{proof}

\subsection{Non-properties of Veronese powers of operads. }
 
We conclude this section by showing that most classically known stronger results valid for Veronese powers of associative algebras are not true for operads in general.

\begin{proposition}\label{prop:non-examples} The following results valid for Veronese powers of algebras are, in general, false in the case of operads:
\begin{enumerate}
\item ``improvement of defining relations for Veronese powers''
  (\cite[Prop.~3]{BaFr85}): if all elements of the minimal set of
  defining relations of $A$ are of weight at most $(k-1)d+1$, then all
  elements of the minimal set of defining relations of $A^{[d]}$ are
  of weight at most $k$ (for the standard grading of the Veronese power where all generators have weight one);
\item ``improvement of the slope of the off-diagonal homology of the bar complexes for Veronese powers'' (\cite[Th.~1]{Ba86}): the rate $r(A)$ defined as $\sup\left\{\frac{j-1}{i-1} \colon (\mathop{\mathrm{Tor}}_i^A)_{(j)}(\bbk,\bbk)\ne 0 \right\}$ satisfies $r(A^{[d]})\le \left\lceil \frac{r(A)}{d}\right\rceil$;
\item ``high'' Veronese powers of a weight graded algebra $A$ with a finite Gr\"obner basis admit a quadratic Gr\"obner basis (\cite[Sec.~4.4, Remark]{PP});
\item all Veronese powers of a weight graded algebra $A$ with a quadratic Gr\"obner basis admit a quadratic Gr\"obner basis (\cite[Prop.~4.4.3]{PP}).
\end{enumerate}
\end{proposition}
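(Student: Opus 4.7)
The plan is to refute each of the four properties by exhibiting explicit operadic counterexamples.

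As preparation, I would use Proposition~\ref{prop:VeroneseFree} to set up the canonical presentation of $\calO^{[d]}$ with generators $\calO_{(d)}$ and relations inside the free operad $\calT(\calO_{(d)})$. This makes it possible to compare, for any candidate operad $\calO$, the weight of defining relations, the rate of the bar-complex homology, and the Gr\"obner basis properties of $\calO^{[d]}$ directly against the bounds predicted by the associative-algebra statements (1)--(4).

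For items (1) and (2), I would hunt for small operads $\calO$ (preferably generated by one binary operation modulo a quadratic or cubic relation) whose Veronese power $\calO^{[d]}$ carries relations or bar-complex classes at weights strictly higher than the algebraic bounds $(k{-}1)d+1$ and $\lceil r(\calO)/d \rceil$. The guiding intuition comes from Proposition~\ref{prop:NaiveVeronese}: the tree shape of an operadic composition can prevent a weight $kd$ element from being expressed as a composition of elements of weight $d$, and this obstruction translates, after passing to a non-free quotient, into a new defining relation of $\calO^{[d]}$ that sits at an ``unexpected'' weight. I would test this heuristic on concrete operads such as $\Lie$, $\Com$, or their naturally occurring triple-system quotients, comparing the minimal presentation of $\calO^{[d]}$ (obtained by Gr\"obner basis computations in the shuffle setting) with the algebraic prediction.

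For items (3) and (4), I would exploit the Koszul duality between Veronese powers and pure homotopy algebras established in Section~\ref{sec:PureHomotopy}: under this duality, $\calO^{[d]}$ is Koszul dual to an operad of pure $\calP_\infty$-algebras, so whenever the latter fails to be Koszul, so does $\calO^{[d]}$, and a~fortiori $\calO^{[d]}$ admits no quadratic Gr\"obner basis. The non-Koszul operad constructed in Section~\ref{sec:mockLieInfty} then provides exactly such a counterexample: it shows that a Koszul operad with a (quadratic or finite) Gr\"obner basis can have a Veronese power that is not quadratic, refuting both~(3) and~(4) at once.

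The main obstacle is the rigorous verification of these claims. Determining the minimal generating set of relations of $\calO^{[d]}$, certifying the rate of its bar homology, or confirming the absence of a quadratic Gr\"obner basis all require explicit operadic computations, typically performed in the shuffle framework and often with computer assistance (for example via the Zeilberger-style Poincar\'e series arguments mentioned in the introduction). A~secondary challenge is economy: ideally one exhibits a single family of operads witnessing all four failures, but in practice one may need tailored examples for each item.
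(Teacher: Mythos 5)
Your proposal is a search strategy rather than a proof: for a non-existence/failure statement of this kind the entire mathematical content is the explicit counterexamples, and you do not produce any. Worse, the two concrete leads you do give point in unproductive directions. For items (1) and (2) you propose hunting among $\Lie$, $\Com$ and their triple-system quotients, but by the paper's own Propositions~\ref{prop:ClassicalTS} and~\ref{prop:LieTriple} the Veronese powers of these operads are quadratic and Koszul, so they satisfy (rather than violate) all four properties. The paper instead builds two bespoke nonsymmetric operads with (near-)monomial relations: Example~1, one binary generator $\omega$ with the single monomial relation $\omega\circ_2(\omega\circ_2\omega)=0$ of weight $3$, whose every Veronese power still needs a weight-$3$ relation (refuting (1) and (3), since monomial relations automatically form a Gr\"obner basis); and Example~2, a three-generator operad with an explicit quadratic Gr\"obner basis, hence Koszul with rate $r(\calO)=1$, whose Veronese square acquires genuinely cubic relations such as $B\circ_1(D\circ_1 A)=0$ and is therefore non-Koszul with rate exceeding $1$ (refuting (2) and (4)).

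For items (3) and (4) your Koszul-duality shortcut has a genuine gap beyond the missing computation. The non-Koszul operad of Section~\ref{sec:mockLieInfty} is $\tLie^n_0$ (equivalently $\tCom^n_1$), which is \emph{not} exhibited anywhere as a Veronese power of a weight graded operad admitting a finite or quadratic Gr\"obner basis; indeed $\tCom^n_d$ agrees with $\Com^{[n-1]}$ only for \emph{even}~$d$. So even granting that a quadratic Gr\"obner basis implies Koszulness (which is the correct implication to use), you have no operad $\calO$ with the hypotheses of (3) or (4) whose Veronese power is that non-Koszul object, and hence no refutation. To close the argument you would need to supply, as the paper does, an operad with a verified quadratic Gr\"obner basis together with an explicit non-quadratic minimal relation in its Veronese power.
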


\begin{proof}
We shall exhibit two examples which give a good idea of what is going on.

\textit{Example 1. } Consider the nonsymmetric operad $\calO$ with one
binary generator $\nu$ subject to the only monomial relation
$\omega\circ_2(\omega\circ_2\omega)=0$ of weight $3$. Then for every $d\ge 1$
the minimal set of relations presenting the Veronese power
$\calO^{[d]}$ as a quotient of $\calT(\calO_{(d)})$ has weight $3$. To
demonstrate this, note that, in the notation of the proof of
Proposition \ref{prop:NaiveVeronese},
in the operad $\calO$ we have
$\omega^{(d)}\ne 0$ and $\omega^{(d)}\circ_{d+1}\omega^{(d)}\ne 0$, but however
\hbox{$\omega^{(d)}\circ_{d+1}(\omega^{(d)}\circ_{d+1}\omega^{(d)})=0$}, which easily
implies that the latter element belongs to the minimal set of
relations of $\calO^{[d]}$. As monomial relations always form a
Gr\"obner basis, this operad clearly serves as a counterexample to
statements (1) and (3).

\textit{Example 2. } Consider the nonsymmetric operad $\calO$ with three binary generators $\mu$, $\nu$, and $\rho$ subject to relations
\begin{gather*}
\mu\circ_1\nu=\rho\circ_2\nu, \quad  \rho\circ_1\nu=0,\\ 
\mu\circ_1\mu=\mu\circ_2\mu=\mu\circ_1\rho=\mu\circ_2\rho=\rho\circ_1\rho=\rho\circ_2\rho=0,\\
\rho\circ_1\mu=\rho\circ_2\mu=\nu\circ_1\mu=\nu\circ_2\mu=\nu\circ_1\rho=\nu\circ_2\rho=0.
\end{gather*}
For the ordering $\mu>\rho>\nu$, these relations form a Gr\"obner
basis of relations of $\calO$ (this is easy to check directly: most
S-polynomials are \emph{equal} to zero, not just have zero as a normal
form). In particular, this operad is Koszul, and $r(\calO)=1$, as the
nonzero homology of the bar construction
is concentrated on the diagonal.

Note that the normal monomials of weight two in $\calO$ are $A=\nu\circ_1\nu$, $B=\mu\circ_2\nu$, $C=\rho\circ_2\nu$, and $D=\nu\circ_2\nu$. Most of the compositions of these elements  are either zero or normal monomials of weight four with respect to the Gr\"obner basis of $\calO$; more precisely, by a direct computation, we have
\begin{gather*}
A\circ_1B=A\circ_2B=A\circ_3B=B\circ_1B=B\circ_2B=B\circ_3B=0,\\
C\circ_1B=C\circ_2B=C\circ_3B=D\circ_1B=D\circ_2B=D\circ_3B=0,\\
A\circ_1C=A\circ_2C=A\circ_3C=B\circ_1C=B\circ_2C=B\circ_3C=0,\\
C\circ_1C=C\circ_2C=C\circ_3C=D\circ_1C=D\circ_2C=D\circ_3C=0,\\
B\circ_1A=C\circ_1A=C\circ_1D=0
\end{gather*}
and all other weight two compositions of $A$, $B$, $C$, and $D$ are normal monomials with respect to the Gr\"obner basis of $\calO$ with one exception of the element $B\circ_1 D$. This latter element is not a normal monomial with respect to the Gr\"obner basis of $\calO$, and its normal form with respect to that Gr\"obner basis is the element $(\rho\circ_2(\nu\circ_2\nu))\circ_2\nu$, which does not belong to $\calT(\mu,\nu,\rho)^{[2]}$. It follows that the basis of $\calO^{[2]}_{(4)}$ is given by the composites of $A$, $B$, $C$, and $D$ that are normal monomials with respect to the Gr\"obner basis of $\calO$, and the element $B\circ_1 D$. Examining elements of weight $6$, we discover that there are two relations between $A$, $B$, $C$, and $D$ that do not follow from the weight four ones:
 \[
B\circ_1(D\circ_1A)=0=B\circ_1(D\circ_1D) . 
 \] 
For instance, 
 \[
(B\circ_1 D)\circ_1 A=((\rho\circ_2(\nu\circ_2\nu))\circ_2\nu)\circ_1 (\nu\circ_1\nu)=((\rho\circ_1\nu)\circ_1\nu)\circ_2((\nu\circ_2\nu)\circ_1\nu) ,
 \]
and it remains to recall that $\rho\circ_1\nu=0$. We conclude that the minimal set of relations of the operad~$\calO^{[2]}$ is not quadratic, so the bar complex of this operad operad has off-diagonal homology classes and this operad is not Koszul, so its rate of growth of homology is larger than $1$ (the rate of any Koszul operad). Thus this operad serves as a counterexample to statements (2) and (4). 
\end{proof}

\section{Research of polynomial identities from the Veronese viewpoint}\label{sec:Identities}

In this section, we discuss some work on polynomial identities, both classical and recent, in the language of Veronese powers. 

\subsection{Na\"ive Veronese powers and classical triple systems}

Proposition \ref{prop:LeftNormed} (or its version where composition in the first slot is replaced by composition in the last slot) is applicable, among other case, to the ``Three Graces'' $\Ass$, $\Com$, $\Lie$. For $d=2$, this recovers the classical definitions of triple systems.

\begin{proposition}\label{prop:ClassicalTS}\leavevmode
\begin{itemize}
\item $V_2(\Ass)=\Ass^{[2]}$ is the operad controlling totally associative triple systems;
\item $V_2(\Com)=\Com^{[2]}$ is the operad controlling totally commutative associative triple systems; 
\item $V_2(\Lie)=\Lie^{[2]}$ is the operad $\LTS$ controlling Lie triple systems.
\end{itemize}
\end{proposition}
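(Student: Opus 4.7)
The plan is to treat each of the three cases by the same two-step strategy. First, to identify the na\"ive Veronese power $V_2(\calO)$ with $\calO^{[2]}$, I invoke Proposition~\ref{prop:LeftNormed}: each of $\Ass$, $\Com$, $\Lie$ is generated by a single binary operation, and each component is spanned by symmetric group orbits of left comb products---by the associativity-driven rewriting for $\Ass$, by triviality for $\Com$, and by the Dynkin--Specht--Wever theorem for $\Lie$. Hence $V_2(\calO)=\calO^{[2]}$ in all three cases.

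Second, in each case I construct a morphism $\Phi\colon\calQ\to\calO^{[2]}$, where $\calQ$ is the operad defined by the respective triple system axioms (Definitions~\ref{def:TotallyAssocTriple}, \ref{def:TotallyComTriple}, \ref{def:LieTriple}), by sending the ternary generator to $(x_1\star x_2)\star x_3$ in the associative and commutative cases, and to $[[x_1,x_2],x_3]$ in the Lie case. The relations \eqref{eq:tass1}, \eqref{eq:tcomsym}--\eqref{eq:tcomass}, and \eqref{eq:ltsym1}--\eqref{eq:ltder} are routinely verified to hold in $\calO$, so $\Phi$ is well-defined; surjectivity is immediate, since $\calO^{[2]}$ is generated by $\calO_{(2)}$ and the $S_3$-orbit of the image of the ternary generator spans $\calO_{(2)}$ in each case. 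It remains to prove injectivity.

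For $\Ass$ and $\Com$, injectivity follows from a dimension count. A short normal-form analysis on the ternary relations (or a direct appeal to Lister \cite{Lis71}) shows that the component $\calQ(2k+1)$ has dimension $(2k+1)!$ and $1$ respectively, which matches $\dim\Ass(2k+1)$ and $\dim\Com(2k+1)$, so $\Phi$ is an isomorphism.

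The main obstacle is the Lie case, where injectivity amounts to the classical theorem of Jacobson that every multilinear identity of $[[x,y],z]$ in Lie algebras is a consequence of \eqref{eq:ltsym1}--\eqref{eq:ltder}. Following the announcement in the text that this will be re-proved via operadic Gr\"obner bases, I would pass to the shuffle presentation of $\LTS$, choose a path-lexicographic order on tree monomials in the single ternary generator---perhaps after antisymmetrising in its first two slots to absorb \eqref{eq:ltsym1}---compute the Gr\"obner basis of the ideal generated by \eqref{eq:ltsym2} and \eqref{eq:ltder}, and match the count of normal tree monomials in arity $2k+1$ with $\dim\Lie(2k+1)=(2k)!$. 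Identifying a monomial order for which this computation is tractable, and reconciling the resulting normal form with a Dynkin-type basis of $\Lie^{[2]}$, is the delicate step and is where I expect the bulk of the work.
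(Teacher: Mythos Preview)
Your first step and your treatment of $\Ass$ and $\Com$ match the paper exactly: Proposition~\ref{prop:LeftNormed} for the equality $V_2=(-)^{[2]}$, then a dimension count to rule out further relations.

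The Lie case is where you and the paper diverge. You propose to work directly with the ternary operad $\LTS$: choose a shuffle monomial order, compute the Gr\"obner basis of the ideal generated by \eqref{eq:ltsym2}--\eqref{eq:ltder}, and match the normal-monomial count against $(2k)!$. This would work, and you are right that the delicate point is finding an order for which the computation closes. The paper avoids this computation entirely by going the other way: it uses the \emph{known} quadratic Gr\"obner basis of the binary operad $\Lie$ (for the reverse path degree-lexicographic order, the Jacobi relation alone is a Gr\"obner basis and the normal monomials are exactly the left combs), and then invokes Proposition~\ref{prop:PBWVeronese}. Since the weight-$4$ normal monomials in $\calT(\Lie_{(1)})$ are left combs, they automatically lie in $\calT(\Lie_{(1)})^{[2]}$, so the hypothesis of Proposition~\ref{prop:PBWVeronese} is satisfied and $\Lie^{[2]}$ acquires a quadratic Gr\"obner basis for free. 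In particular $\Lie^{[2]}$ is quadratic, and since the $\LTS$ axioms are readily checked to span the weight-two part of the kernel of $\calT(\Lie_{(2)})\to\Lie$, the surjection $\LTS\to\Lie^{[2]}$ is an isomorphism.

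The trade-off: your route is self-contained at the level of the ternary operad but requires a nontrivial Gr\"obner computation whose termination you cannot guarantee in advance; the paper's route transfers the already-done binary computation through a structural result, so nothing new has to be computed. If you want to follow the paper's line, the only thing you are missing is the appeal to Proposition~\ref{prop:PBWVeronese}.
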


\begin{proof}
In each of these statements, the equality follows from Proposition \ref{prop:LeftNormed}, and it only remains to check that there are no other relations. 
In the first two cases, this is obvious, as already the relations we have give a tight upper bound on the size of the Veronese power. For the third one, to show that all the defining relations of $V_2(\Lie)=\Lie^{[2]}$ are quadratic, we use Proposition \ref{prop:PBWVeronese}. Indeed, it is known \cite{DoKh} that for the reverse path degree-lexicographic order on shuffle trees the defining relation of $\Lie$ forms a Gr\"obner basis, so the left comb products of the generator are precisely the normal tree monomials. Thus, the operad $\Lie^{[2]}$ is Koszul and therefore quadratic. 
\end{proof}

\subsection{Beyond the na\"ive Veronese powers}

Most of our work focussed on operads covered by Proposition \ref{prop:LeftNormed}, which are operads for which Veronese powers coincide with na\"ive Veronese powers. We shall now discuss two important cases going beyond this framework, a very classical one in the context of polynomial identities, and a very recent one. 

\smallskip 

One important example not covered by Proposition \ref{prop:LeftNormed}
is the operad of Jordan algebras. Those algebras are usually defined
as commutative (non-associative) algebras
satisfying the identity 
\[
(ab)(aa) = a(b(aa))
\]
which is not multilinear. Since we work in the
context of operad theory, we recall an equivalent multilinear version.

\begin{definition}
The operad $\Jord$ of Jordan algebras is generated by one operation $a_1,a_2\mapsto a_1a_2$, subject to the relation 
 \[
((a_1a_2)a_3)a_4+((a_1a_4)a_3)a_2+((a_2a_4)a_3)a_1=(a_1a_2)(a_3a_4)+(a_1a_3)(a_2a_4)+(a_1a_4)(a_2a_3) .
 \] 
\end{definition}

The classical definition of a Jordan triple system \cite{Jac49} as
recalled below 
is intimately related to our notion of Veronese powers, although the way that definition is usually given slightly obscures this~fact. 

\begin{definition}
A \emph{Jordan triple system} is a vector space $V$ with a trilinear operation $\{-,-,-\}\colon V^{\times 3}\to V$ satisfying the properties
\begin{gather*}
\{a_1,a_2,a_3\}=\{a_3,a_2,a_1\}, \label{eq:jtsym} \\
\{a_1,a_2,\{a_3,a_4,a_5\}\}=\{\{a_1,a_2,a_3\},a_4,a_5\}-\{a_3,\{a_2,a_1,a_4\},a_5\}+\{a_3,a_4,\{a_1,a_2,a_5\}\} . \label{eq:jtder}
\end{gather*}
\end{definition} 

These identities are satisfied by the operation $\{a_1,a_2,a_3\}=(a_1a_2)a_3+a_1(a_2a_3)-a_2(a_1a_3)$ in any Jordan algebra. One can easily check that this operation generates $\Jord_{(2)}$ as an $S_3$-module, thus the suboperad of $\Jord$ generated by this operation is $\Jord^{[2]}$. An easy computation shows that the relations of the operad $\JTS$ of Jordan triple systems are the only \emph{quadratic} relations satisfied by this operation, so in particular,  we have a natural surjection from the operad $\JTS$ to the operad $\Jord^{[2]}$.  To the best of our knowledge, it is not known whether $\JTS\cong\Jord^{[2]}$. We conjecture that it is the case.

\begin{conjecture}\label{conj:JordVeron}
We have  $\JTS\cong\Jord^{[2]}$. In other words, all relations
satisfied by the operation 
\[
(a_1a_2)a_3+a_1(a_2a_3)-a_2(a_1a_3)
\] 
in any Jordan algebra follow from the axioms of Jordan triple systems. In yet other words, the operad $\Jord^{[2]}$ is quadratic.
\end{conjecture}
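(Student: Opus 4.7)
The plan is to prove that the natural surjection $\pi\colon \JTS\twoheadrightarrow\Jord^{[2]}$, whose existence is already recorded in the paragraph preceding the conjecture, is an isomorphism. Equivalently, one must establish the dimension equality $\dim\JTS(2k+1)=\dim\Jord^{[2]}(2k+1)$ for every $k\ge 0$.

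First I would attempt to bound $\dim\JTS(n)$ from above by operadic Gr\"obner bases. Passing to the shuffle operad underlying $\JTS$ and choosing an appropriate monomial order (for instance the reverse path degree-lexicographic order used successfully in \cite{DoKh} for $\Lie^{[2]}$), I would run Buchberger's algorithm on the $S_3$-orbits of the symmetry identity and the derivation identity defining $\JTS$. If the resulting Gr\"obner basis turns out to be quadratic, then $\JTS$ is Koszul and one obtains a closed form for its Hilbert series via the Koszul dual $\JTS^{\ac}$; otherwise one still extracts tight upper bounds from the normal monomials computed up to any given weight, sufficient for an arity-by-arity comparison.

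For a matching lower bound on $\dim\Jord^{[2]}(n)$ I would exploit an explicit realisation of the triple product through the Tits--Kantor--Koecher construction: every Jordan algebra $J$ embeds as the piece $\mathcal{L}_1$ of a $3$-graded Lie algebra $\mathcal{L}(J)=\mathcal{L}_{-1}\oplus\mathcal{L}_0\oplus\mathcal{L}_1$ in which $\{x,y,z\}$ is recovered as a specific iterated bracket. Iterated triple products in $J$ then correspond to controlled bracket monomials in $\mathcal{L}(J)$, whose linear independence can be tested against a sufficiently generic Jordan algebra. To avoid missing non-special relations one should combine a free special Jordan algebra (for which everything reduces to the associative operad through the classical Cohn--Shirshov embedding) with the exceptional Albert algebra.

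The main obstacle is matching the two bounds uniformly across all arities. The operad $\Jord$ itself is famously intractable: its Koszulness is open, and no closed form is known for $\dim\Jord(n)$. A complete argument would likely require either verifying the dimension equality computationally up to some finite arity and then producing a cohomological stabilisation argument beyond that, or identifying a common Koszul dual for $\JTS$ and $\Jord^{[2]}$ which, in light of the results of Section~\ref{sec:PureHomotopy}, should be a pure analogue of the minimal model $\Jord_\infty$. The second route is conceptually the most attractive, but it requires understanding enough of the (presently unknown) operad $\Jord_\infty$ to pin down its arity-$3$ pure truncation, and this is where I expect the real difficulty to lie.
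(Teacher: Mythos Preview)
This statement is a \emph{conjecture}, not a theorem: the paper explicitly says ``To the best of our knowledge, it is not known whether $\JTS\cong\Jord^{[2]}$. We conjecture that it is the case.'' There is therefore no proof in the paper to compare your proposal against, and you should not be trying to supply one as though the paper did.

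What you have written is not a proof but a research outline, and you are honest about this yourself: you write that ``the main obstacle is matching the two bounds uniformly across all arities'' and that ``this is where I expect the real difficulty to lie.'' That difficulty is genuine and, as far as anyone knows, unresolved. The Gr\"obner-basis approach you sketch for the upper bound on $\dim\JTS(n)$ is plausible in principle, but there is no reason to expect the defining relations of $\JTS$ to form a quadratic Gr\"obner basis for any convenient order; unlike $\Lie$, the operad $\Jord$ is not covered by Proposition~\ref{prop:LeftNormed}, so the argument that worked for $\Lie^{[2]}$ in Proposition~\ref{prop:ClassicalTS} does not transfer. On the lower-bound side, your Tits--Kantor--Koecher idea only probes identities that hold in \emph{all} Jordan algebras via Lie-theoretic embeddings, and combining special Jordan algebras with the Albert algebra does not obviously exhaust the free Jordan algebra in each arity; this is precisely the circle of difficulties that makes $\dim\Jord(n)$ itself unknown in general. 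Your second route, through a putative Koszul dual, presupposes knowledge of $\Jord_\infty$ that nobody currently has. In short, the gaps you flag are the actual open problems, and nothing in your proposal closes them.
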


Of course, it is even less clear what the higher Veronese powers $\Jord^{[k]}$ are. Note that the ``Jordan quadruple systems'' \cite{BrMadQuad} are defined in a way that it is not at all clear how they are related to Veronese powers.

\smallskip 

Another important example of an operad not covered by Proposition \ref{prop:LeftNormed}  is the pre-Lie operad.

\begin{definition}
The operad $\PL$ of pre-Lie algebras is generated by one operation $a_1,a_2\mapsto a_1a_2$, subject to the relation 
 \[
(a_1a_2)a_3-a_1(a_2a_3)=(a_1a_3)a_2-a_1(a_3a_2) .
 \]
\end{definition}

The Veronese square of this operad was essentially studied by Bremner and Madariaga \cite{BrMadPL}; their computations take place inside the free dendriform algebra, but as the operad $\PL$ is well known to be a suboperad of the dendriform operad, this is sufficient to study free pre-Lie algebras and the pre-Lie operad. 

\begin{proposition}[{\cite[Th.~4.5]{BrMadPL}}]
Consider the following two elements of the operad $\PL^{[2]}$: 
 \[
[a_1,a_2,a_3]_1=(a_1a_2)a_3, \quad  [a_1,a_2,a_3]_2=a_1(a_2a_3) .
 \]
All relations of weight at most two between these elements follow from the relations
 \[
[a_1, a_2, a_3]_1 - [a_1, a_2, a_3]_2 =  [a_1, a_3, a_2]_1 - [a_1, a_3, a_2]_2 , 
 \]
\begin{gather*}
[[a_1, a_2, a_3]_1, a_4, a_5]_2 - [[a_1, a_4, a_5]_2, a_2, a_3]_1 + [a_1, [a_4, a_5, a_2]_1, a_3]_1 - [a_1, [a_2, a_4, a_5]_2, a_3]_1\\
+ [a_1, a_2, [a_4, a_5, a_3]_1]_1 - [a_1, a_2, [a_3, a_4, a_5]_2]_1 =0 ,\\
[[a_1, a_2, a_3]_1, a_4, a_5]_1 - [[a_1, a_4, a_2]_1, a_3, a_5]_1 + [[a_1, a_4, a_3]_1, a_2, a_5]_1 - [[a_1, a_3, a_2]_1, a_4, a_5]_1\\
- [a_1, [a_2, a_3, a_4]_1, a_5]_1 + [a_1, [a_4, a_2, a_3]_1, a_5]_1 - [a_1, [a_4, a_3, a_2]_1, a_5]_1 + [a_1, [a_3, a_2, a_4]_1, a_5]_1 =0,\\
[[a_1, a_2, a_3]_1, a_4, a_5]_2 - [[a_1, a_4, a_5]_2, a_2, a_3]_1 + [[a_1, a_4, a_5]_2, a_2, a_3]_2 - [[a_1, a_2, a_3]_2, a_4, a_5]_2\\
+ [a_1, [a_4, a_5, a_2]_1, a_3]_1 + [a_1, [a_4, a_5, a_3]_1, a_2]_1 - [a_1, [a_2, a_3, a_5]_1, a_4]_1 - [a_1, [a_4, a_5, a_3]_1, a_2]_2\\
+ [a_1, [a_2, a_3, a_5]_1, a_4]_2 - [a_1, [a_2, a_4, a_5]_2, a_3]_1 - [a_1, [a_3, a_4, a_5]_2, a_2]_1 - [a_1, [a_4, a_2, a_3]_2, a_5]_2\\
+ [a_1, [a_2, a_4, a_5]_2, a_3]_2 + [a_1, [a_3, a_4, a_5]_2, a_2]_2 + [a_1, a_4, [a_2, a_3, a_5]_1]_1 - [a_1, a_4, [a_5, a_2, a_3]_2]_2 = 0.
\end{gather*}
\end{proposition}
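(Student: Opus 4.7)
The plan is to reduce the proposition to a finite equivariant linear algebra computation. By Proposition~\ref{prop:VeroneseFree}, the free operad $\calT(\PL_{(2)})$ surjects onto $\PL^{[2]}$, and since $[-,-,-]_1$ and $[-,-,-]_2$ generate $\PL_{(2)}$ as an $S_3$-module, it suffices to identify all relations satisfied by these two distinguished generators (and their $S_n$-orbits) up to weight two. The central computational tool is the Chapoton--Livernet rooted-tree basis of $\PL(n)$, which gives $\dim\PL(n)=n^{n-1}$; in this basis every partial composition of the two generators admits an explicit expansion.

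At weight one (arity $3$) the twelve formal elements $[a_{\sigma(1)},a_{\sigma(2)},a_{\sigma(3)}]_i$ with $\sigma\in S_3$ and $i\in\{1,2\}$ map into the $9$-dimensional space $\PL(3)$, so the weight-one relation space is three-dimensional. A direct check shows that the first displayed relation is a rewriting of the pre-Lie identity $(a_1a_2)a_3-a_1(a_2a_3)=(a_1a_3)a_2-a_1(a_3a_2)$, and its $S_3$-orbit (of size three, since the relation is invariant under the transposition $(2\,3)$) accounts for the full weight-one kernel.

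For weight two one enumerates the twelve partial compositions $[-,-,-]_i\circ_k[-,-,-]_j$ with $i,j\in\{1,2\}$ and $k\in\{1,2,3\}$, together with their $S_5$-orbits, and expands each in the Chapoton--Livernet basis of $\PL(5)$, a space of dimension $625$. Following the strategy of Bremner--Madariaga, this expansion is most cleanly performed inside the free dendriform algebra, via the well-known embedding of the pre-Lie operad into the dendriform operad, where the normal forms become more tractable. One then computes the kernel of the resulting $S_5$-equivariant linear map and isolates the \emph{new} weight-two relations modulo those obtained by composing the weight-one identity with a single generator on either side.

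The main obstacle is this last bookkeeping step: with a $625$-dimensional target and full $S_5$-equivariance the kernel is large, and the delicate claim is that, after quotienting by the relations forced by the weight-one identity, the remainder is exactly spanned by the $S_5$-orbits of the three displayed quadratic identities, with no ``exotic'' extra relation appearing. Ruling out any further relation is precisely the computational content of Theorem~4.5 of \cite{BrMadPL}. Once candidate relations are exhibited, verifying that they do hold in $\PL^{[2]}$ reduces to routine substitution in the free pre-Lie algebra.
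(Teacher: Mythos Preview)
The paper does not give a proof of this proposition at all: it is stated as a citation of \cite[Th.~4.5]{BrMadPL}, and the surrounding text only records the result and the subsequent conjecture. So there is no ``paper's own proof'' to compare against.

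Your outline is an accurate description of how the cited reference actually establishes the result: set up the $S_5$-equivariant linear map from $\calT(\PL_{(2)})$ into $\PL(5)$ (computed inside the free dendriform algebra, where normal forms are available), compute its kernel, and quotient by the consequences of the weight-one pre-Lie identity. You are also honest that the substantive step --- showing that \emph{no further} quadratic relation appears beyond the $S_5$-orbits of the three displayed identities --- is exactly the computer-assisted computation carried out in \cite{BrMadPL}, and you defer to that reference for it. That is appropriate here, since the present paper does the same. One small inaccuracy: you say the weight-one relation space is three-dimensional because twelve elements map into a nine-dimensional space, but this only gives a lower bound of three; the fact that the image is all of $\PL(3)$ (equivalently, that the two ternary operations generate $\PL_{(2)}$ as an $S_3$-module) is needed to conclude equality, and you use this fact elsewhere without quite closing the loop here.
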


Bremner and Madariaga prove \cite[Th.~4.5]{BrMadPL} that the operad $\PL^{[2]}$ has no cubic relations that do not follow from the quadratic ones, and make a conjecture which may be re-stated in terms of Veronese powers as follows.

\begin{conjecture}[{\cite[Conj.~4.8]{BrMadPL}}]
The operad $\PL^{[2]}$ is quadratic.
\end{conjecture}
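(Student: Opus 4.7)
The plan is to apply Proposition \ref{prop:PBWVeronese} to the pre-Lie operad. Since $\PL$ admits a quadratic Gr\"obner basis for a suitable monomial ordering on shuffle trees (cf.\ \cite{DoKh}), it suffices to verify that every normal tree monomial of weight $4$ (that is, of arity $5$) with respect to that basis belongs to $\calT(\calX)^{[2]}$. By the iterative test described in the Remark following Proposition \ref{prop:VeroneseFree}, this reduces to checking, for each such normal monomial $T$, that the set $R_2(T)$ of its right divisors with two vertices is nonempty and that after removing those divisors the resulting quotient tree is again in $\calT(\calX)^{[2]}$. If this condition holds, Proposition \ref{prop:PBWVeronese} yields a quadratic Gr\"obner basis of $\PL^{[2]}$ and in particular proves that it is Koszul, which is a stronger statement than the conjecture.

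Concretely, I would first enumerate the normal shuffle tree monomials of arity $5$ with respect to the Gr\"obner basis of $\PL$; their combinatorial shape is determined by the leading term of the pre-Lie relation and corresponds to certain labelled rooted trees. I would then test each shape to see whether two disjoint weight-$2$ right divisors can be factored out, leaving a weight-$2$ top piece. Proposition \ref{prop:PBWVeronese} requires only the existence of \emph{some} quadratic Gr\"obner basis satisfying its hypothesis, so there is additional flexibility: if the standard ordering does not work, one could try path-lexicographic, degree-reverse-lexicographic, or weighted variants, or attempt a linear change of basis in the space of generators before rerunning Buchberger's algorithm.

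The main obstacle is that, in contrast with $\Lie$ whose normal monomials are left combs and thus trivially split into weight-$2$ pieces, the normal monomials of $\PL$ exhibit much richer tree shapes, and it is not clear a priori that every such shape of arity $5$ admits the required right-divisor decomposition. Moreover, even if some weight-$4$ normal monomials fail the test for every quadratic Gr\"obner basis, one would need to rule this out intrinsically, which seems delicate.

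As a back-up strategy, one would turn to a Hilbert-series comparison: compute the generating series of the quadratic operad $\calQ$ presented by the relations listed in the proposition above, possibly through its Koszul dual $\calQ^{\ac}$ or via Zeilberger's algorithm as used elsewhere in this paper, and compare it to the generating series of $\PL^{[2]}$, whose components sit inside the explicitly known spaces $\PL(2k+1)$ of dimension $(2k+1)^{2k}$. Since the surjection $\calQ\twoheadrightarrow\PL^{[2]}$ is already established by \cite{BrMadPL}, matching Hilbert series in every arity would close the argument; however, proving such an equality in closed form, rather than merely verifying it in low arities, is likely to be the genuinely hard step.
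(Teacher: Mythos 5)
The statement you are addressing is not a theorem of this paper: it is recorded here verbatim as an \emph{open conjecture} (Bremner--Madariaga, \cite[Conj.~4.8]{BrMadPL}), and the paper neither proves it nor claims to. So there is no proof to compare yours against; the only question is whether your text settles the conjecture, and it does not. What you have written is a research programme, not a proof: each of its branches terminates in a difficulty that you yourself flag as unresolved (``it is not clear a priori that every such shape \ldots admits the required right-divisor decomposition'', ``is likely to be the genuinely hard step''). No step is actually carried out, so nothing is established.

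Concerning the specific route via Proposition~\ref{prop:PBWVeronese}: its hypothesis is that \emph{every} weight-$4$ normal tree monomial of $\PL$ lies in $\calT(\calX)^{[2]}$, and this is a strong and unverified assumption. The paper deliberately lists $\PL$ among the operads \emph{not} covered by Proposition~\ref{prop:LeftNormed}: the normal monomials for the quadratic Gr\"obner basis of $\PL$ are indexed by arbitrary labelled rooted trees rather than by left combs, and among the $4$-vertex tree shapes one finds shapes with $R_2(T)=\emptyset$ (precisely the shape of the element $\nu$ in the proof of Proposition~\ref{prop:NaiveVeronese}), which by the Remark after Proposition~\ref{prop:VeroneseFree} do not belong to $\calT(\calX)^{[2]}$. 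You would therefore need to exhibit an ordering (or a change of generators) for which no such shape is normal, and you give no evidence that one exists; note also that if the hypothesis did hold it would force $\PL(5)=\PL^{[2]}(5)$, a nontrivial assertion you have not checked. The back-up Hilbert-series comparison is circular as stated: computing the Poincar\'e series of the quadratic cover $\calQ$ in closed form would essentially require knowing that $\calQ$ is Koszul (unknown), and the series of $\PL^{[2]}$ itself is not known beyond the low arities already treated in \cite{BrMadPL}. The conjecture remains open.
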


\subsection{Dialgebras and Veronese powers}

In some recent papers on polynomial identities, the notion of $\calP$-dialgebras (for various operads $\calP$) has been studied in the context of Veronese powers. Let us recall the relevant definition and its historical context. Recall that the operad $\Perm$ of ``permutative'' algebras encodes (non-unital) associative algebras satisfying the extra identity $abc=acb$. 

\begin{definition}
Let $\calP$ be an operad. The operad $\di\calP$ is the Hadamard product $\calP\hadam\Perm$. Algebras over this operad are called \emph{\/$\calP$-dialgebras}.
\end{definition}

This definition goes back to Chapoton \cite{Chap}. Later, explicit algorithms for computing all identities of $\calP$-dialgebras from all identities of $\calP$-algebras were suggested by several authors \cite{BSO,Kol,Pozh}, and these algorithms were proved to give the same result~\cite{KV}. 

\smallskip

Let us establish a result on Veronese powers of Hadamard products of operads which we shall then use to relate the definition of $\calP$-dialgebras to Veronese powers. 
Let $\calO$ be an operad and \[\mu^\calO\colon \calT(\calO) \to \calO\]  the
natural map given by the structure operations of~$\calO$. Recall that
the arity $n$ component
$\calT(\calO)(n)$ of the free  operad  $\calT(\calO)$ decomposes into the direct sum,
\[
\calT(\calO)(n) = \bigoplus_{[T]} \calT_T(\calO),
\]
over isomorphism classes of labelled rooted trees with $n$
leaves. Denote finally by 
\begin{equation}\label{eq:rest}
\mu_T^\calO :
\calT_T(\calO) \to \calO(n),
\end{equation}
the restriction of $\mu^\calO$ to $\calT_T(\calO)$.

\begin{proposition}
Let $\calP$ be a weight graded operad and $\calQ$ an operad. Let us
equip the Hadamard product $\calP \hadam \calQ$ with a weight grading
by postulating that the weight of $p \ot q \in \calP \hadam \calQ$
equals the weight of $p$. Then 
\[
V_k(\calP \hadam \calQ) = V_k(\calP) \hadam \calQ\
 \hbox { and } \ (\calP \hadam \calQ)^{[k]} \subset \calP^{[k]}
\hadam \calQ.
\]
Suppose moreover that all restrictions $\mu^\calQ_T$ as in~\eqref{eq:rest} are
epimorphisms. Then the inclusion above is actually an equality
\[
(\calP \hadam \calQ)^{[k]} =  \calP^{[k]} \hadam \calQ.
\] 
\end{proposition}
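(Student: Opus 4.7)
The plan is to dispatch each claim in turn, starting from the straightforward definitional checks and concentrating effort on the one non-trivial inclusion.

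For the identification $V_k(\calP \hadam \calQ) = V_k(\calP) \hadam \calQ$, I would simply unpack the prescribed weight grading: by construction $(\calP \hadam \calQ)(n)_{(w)} = \calP(n)_{(w)} \otimes \calQ(n)$, so summing over weights divisible by $k$ yields $V_k(\calP)(n) \otimes \calQ(n)$, and the componentwise Hadamard composition restricts correctly to this subcollection. For the inclusion $(\calP \hadam \calQ)^{[k]} \subset \calP^{[k]} \hadam \calQ$, I would observe that $\calP^{[k]}$ is a suboperad of $\calP$, hence $\calP^{[k]} \hadam \calQ$ is a suboperad of $\calP \hadam \calQ$. Since it contains $\calP_{(k)} \otimes \calQ = (\calP \hadam \calQ)_{(k)}$, which is the generating collection of $(\calP \hadam \calQ)^{[k]}$, the desired inclusion follows from the minimality of the suboperad generated by a given subcollection.

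The main task is the reverse inclusion $\calP^{[k]} \hadam \calQ \subset (\calP \hadam \calQ)^{[k]}$, which is where the epimorphism assumption enters. It suffices to show that every elementary tensor $p \otimes q$ with $p \in \calP^{[k]}(n)$ and $q \in \calQ(n)$ lies in $(\calP \hadam \calQ)^{[k]}$. By definition of $\calP^{[k]}$ as the suboperad of $\calP$ generated by $\calP_{(k)}$, we have an expansion $p = \sum_j \mu^\calP_{T_j}(p^j_1 \otimes \cdots \otimes p^j_{r_j})$ with each $p^j_i \in \calP_{(k)}$ and $T_j$ a rooted tree with $n$ leaves. For each summand index $j$ \emph{separately}, the hypothesis that $\mu^\calQ_{T_j}$ is surjective furnishes elements $q^j_1, \ldots, q^j_{r_j}$, placed at the vertices of $T_j$ in the matching arities, with $\mu^\calQ_{T_j}(q^j_1 \otimes \cdots \otimes q^j_{r_j}) = q$. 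Using the componentwise nature of Hadamard composition, we then rewrite
\[
p \otimes q = \sum_j \mu^\calP_{T_j}(p^j_1 \otimes \cdots) \otimes \mu^\calQ_{T_j}(q^j_1 \otimes \cdots) = \sum_j \mu^{\calP \hadam \calQ}_{T_j}\bigl((p^j_1 \otimes q^j_1) \otimes \cdots \otimes (p^j_{r_j} \otimes q^j_{r_j})\bigr),
\]
which lies in $(\calP \hadam \calQ)^{[k]}$ since each $p^j_i \otimes q^j_i$ belongs to $(\calP \hadam \calQ)_{(k)}$.

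The subtlety I expect to be the main obstacle is a bookkeeping one: the expansion of $p$ may involve many distinct tree shapes $T_j$, and the preimages $q^j_i$ of $q$ under the various surjections $\mu^\calQ_{T_j}$ are by no means canonical and depend on $j$. The point to emphasise is that a \emph{uniform} preimage is not needed; the tree-by-tree choice is exactly what the Hadamard structure accommodates, since the sum above combines contributions coming from different tree shapes into a single element of $(\calP \hadam \calQ)^{[k]}$. Once this is noted, the argument reduces to the two routine verifications in the previous paragraphs.
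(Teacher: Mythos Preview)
Your proposal is correct and follows essentially the same approach as the paper. The paper dismisses the first two claims as ``elementary'' and for the reverse inclusion writes a generic element of $\calP^{[k]}\hadam\calQ$ as a linear combination of terms $\mu^\calP_T(x)\otimes q$ with $x\in\calT_T(\calP_{(k)})$, then uses the surjectivity of $\mu^\calQ_T$ to factor $q=\mu^\calQ_T(y)$ and rewrite the term as $\mu^{\calP\hadam\calQ}_T(x\otimes y)$; your expansion $p=\sum_j\mu^\calP_{T_j}(\cdots)$ and tree-by-tree choice of preimages is exactly this argument with the linear combination made explicit.
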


\begin{proof}
The first part of the proposition is elementary.  To prove the second
one, notice that elements of $\calP^{[k]} \hadam \calQ$ are linear
combinations of elements of the form 
$
\mu^\calP_T(x) \ot q, 
$
where $x \in \calT_T(\calP_{(k)})$ and $q \in \calQ$. By assumption, $q =
\mu^\calQ_T(y)$ for some $y \in \calT_T(\calQ)$, therefore 
\[
\mu^\calP_T(x) \ot q = \mu^\calP_T(x) \otimes \mu^\calQ_T(y) =
\mu^{\calP \hadam \calQ}_T (x \ot y) \in (\calP \hadam \calQ)^{[k]} ,
\] 
as claimed.
\end{proof}

The assumption required by the second part of the proposition means,
in plain language, that given an arbitrary composition scheme, every
element of $\calQ$ is decomposable with respect to this scheme. If 
the operad $\calQ$ is generated by binary operations, it is sufficient to 
require that the restrictions $\mu^\calQ_T$ are epimorphisms for all binary 
trees $T$. Such operads were studied in \cite{Val08}, where it was proved
that for such operad $\calQ$, the Hadamard product $\calP\hadam\calQ$ 
coincides with the Manin white product of $\calP$ and $\calQ$. In particular,
this property holds for the operad $\Perm$, see \cite[Cor.~16]{Val08}. We therefore have

\begin{corollary}\label{cor:VeroneseDialg}
Let $\calP$ be a weight graded operad. For such an operad, 
the  operad of dialgebras construction commutes with Veronese powers: 
 \[ 
V_k(\di\calP) =\di V_k(\calP) \quad \text{and} \quad  (\di\calP)^{[k]} = \di (\calP^{[k]}) .
 \]
\end{corollary}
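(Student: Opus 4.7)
The plan is to deduce this corollary directly from the preceding proposition applied to $\calQ = \Perm$. By definition $\di\calP = \calP \hadam \Perm$, so the first assertion $V_k(\di\calP) = \di V_k(\calP)$ is immediate from the first part of the proposition, which holds with no assumptions on $\calQ$: we have $V_k(\calP \hadam \Perm) = V_k(\calP) \hadam \Perm$, and then unfolding the definition of $\di$ on the right gives exactly $\di V_k(\calP)$.

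For the second assertion, the task reduces to verifying the hypothesis of the second part of the proposition for $\calQ = \Perm$, that is, that all restrictions $\mu^\Perm_T$ are epimorphisms. First I would recall that $\Perm$ is binary generated, so by the remark made right after the proposition it suffices to check this property for binary trees $T$. Second, I would invoke \cite[Cor.~16]{Val08}, which is already cited in the paragraph preceding the corollary, and which asserts precisely that $\Perm$ has the property that the Hadamard product $\calP \hadam \Perm$ coincides with the Manin white product; this is exactly the epimorphism condition on $\mu^\Perm_T$ for binary trees $T$. With that hypothesis in place, the second part of the proposition yields $(\calP \hadam \Perm)^{[k]} = \calP^{[k]} \hadam \Perm$, which is $\di(\calP^{[k]})$, as required.

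Since both statements are immediate consequences of already established facts, there is no serious obstacle; the only minor point deserving care is making sure that the weight grading on $\di\calP = \calP \hadam \Perm$ that enters the statement of the corollary is the one coming from the weight of the $\calP$-factor (as stipulated in the proposition), so that the notations $V_k(\di\calP)$ and $(\di\calP)^{[k]}$ are consistent with the general setup. I would therefore begin the proof by making this weight grading convention explicit, after which the two desired equalities follow by a one-line application of each part of the proposition.
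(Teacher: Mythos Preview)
Your proposal is correct and follows essentially the same approach as the paper: apply the preceding proposition with $\calQ=\Perm$, using the first part for na\"ive Veronese powers and invoking \cite[Cor.~16]{Val08} to verify the epimorphism hypothesis needed for the second part. The paper's argument is identical in substance, presented as the paragraph immediately preceding the corollary.
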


\medskip

In \cite{BrSO}, Leibniz triple systems are introduced, taking as a starting point the operad $\LTS$ of Lie triple systems. Namely, Leibniz triple systems are defined as  $\LTS$-dialgebras. One of the results of \cite{BrSO} can be re-stated in the language of our paper as follows. 

\begin{proposition}[{\cite[Th.~23]{BrSO}}]
The Veronese power $\Leib^{[2]}$ is isomorphic to the operad of Leibniz triple systems.
\end{proposition}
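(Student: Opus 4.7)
The plan is to assemble three simple ingredients. First, by the very definition of Leibniz triple systems as $\LTS$-dialgebras, the operad governing them is $\di \LTS = \LTS \hadam \Perm$. Second, Proposition~\ref{prop:ClassicalTS} identifies $\LTS$ with $\Lie^{[2]}$. Third, I would invoke the classical identification, going back to Chapoton \cite{Chap}, that the operad $\Leib$ of Leibniz algebras coincides with $\di \Lie = \Lie \hadam \Perm$.

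Granted these three inputs, the proof reduces to the one-line chain
\[
\text{op. of Leibniz triple systems} \;=\; \di \LTS \;=\; \di(\Lie^{[2]}) \;=\; (\di \Lie)^{[2]} \;=\; \Leib^{[2]},
\]
where the penultimate equality is precisely Corollary~\ref{cor:VeroneseDialg} applied to $\calP = \Lie$, equipped with the standard weight grading (arity minus one), which is indeed preserved under Hadamard product by a degree-zero operad such as $\Perm$.

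I do not anticipate a serious obstacle, as each link of the chain is either definitional or already proved in the present paper. The only step requiring any real verification is the identification $\di\Lie \cong \Leib$; this can either be quoted from \cite{Chap}, or checked by a short direct computation, noting that $\Lie(2) \otimes \Perm(2)$ carries the regular $S_2$-representation (so the Hadamard product is generated by a single non-symmetric binary operation), and that the quadratic relations inherited from the Jacobi identity in $\Lie$ together with the permutative identity $abc = acb$ in $\Perm$ are exactly the Leibniz relation. With that identification in hand the argument is purely formal.
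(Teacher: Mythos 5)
Your proposal is correct and coincides with the paper's own (alternative) proof: both identify the operad of Leibniz triple systems with $\di\LTS$, use $\Leib=\di\Lie$ from Chapoton, $\LTS\cong\Lie^{[2]}$ from Proposition~\ref{prop:ClassicalTS}, and conclude via Corollary~\ref{cor:VeroneseDialg} that $\Leib^{[2]}=(\di\Lie)^{[2]}=\di(\Lie^{[2]})\cong\di\LTS$. The only difference is your optional aside on verifying $\di\Lie\cong\Leib$ directly, which the paper simply cites.
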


Let us give a short alternative proof of this result. By \cite{Chap}, $\Leib=\di\Lie$, and by Proposition \ref{prop:ClassicalTS}, $\LTS\cong\Lie^{[2]}$. Finally, by Corollary \ref{cor:VeroneseDialg},
 \[
\Leib^{[2]}=(\di\Lie)^{[2]}=\di(\Lie^{[2]})\cong\di\LTS ,
 \]
as required.

\medskip

In \cite{BFSO}, the authors define and study a version of the previous definition
where Lie algebras are replaced by Jordan algebras. Namely, they define a Jordan triple disystem 
as a $\JTS$-dialgebra. One of the results
proved in \cite{BFSO} can be re-stated in the language of our paper as
follows. 

\begin{proposition}[{\cite[Th.~7.3]{BFSO}}]
Generating operations of the operad $(\di\Jord)^{[2]}$ satisfy the identities of Jordan triple disystems.  
\end{proposition}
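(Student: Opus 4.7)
The plan is to deduce this result as a formal consequence of the tools built up in this section rather than verify the identities by hand. Specifically, I would combine the natural surjection $\JTS \twoheadrightarrow \Jord^{[2]}$ recorded in the discussion preceding Conjecture~\ref{conj:JordVeron} with Corollary~\ref{cor:VeroneseDialg}.

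The first key step is to observe that the Hadamard product with $\Perm$ is functorial in each argument, and, being defined arity-wise as a tensor product over a field, it preserves surjections. Hence the surjection $\pi \colon \JTS \twoheadrightarrow \Jord^{[2]}$ induces a surjection
\[
\pi \hadam \id_{\Perm} \colon \di\JTS = \JTS \hadam \Perm \twoheadrightarrow \Jord^{[2]} \hadam \Perm = \di(\Jord^{[2]}).
\]
The second key step is to apply Corollary~\ref{cor:VeroneseDialg} with $\calP = \Jord$ and $k = 2$, which gives $\di(\Jord^{[2]}) = (\di\Jord)^{[2]}$. Composing yields a surjection $\di\JTS \twoheadrightarrow (\di\Jord)^{[2]}$. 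By definition, Jordan triple disystems are algebras over $\di\JTS$, so the existence of such a surjection is precisely the assertion that the generating operations of $(\di\Jord)^{[2]}$ satisfy all defining identities of Jordan triple disystems.

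There is no serious obstacle here: once Corollary~\ref{cor:VeroneseDialg} and the surjection $\JTS \twoheadrightarrow \Jord^{[2]}$ are in hand, the argument is entirely formal, and parallels the short alternative proof given above for the result about $\Leib^{[2]}$. The substantive open question that the current statement does not address is whether the constructed surjection $\di\JTS \twoheadrightarrow (\di\Jord)^{[2]}$ is actually an isomorphism; this would follow from Conjecture~\ref{conj:JordVeron}, since the Hadamard product also preserves isomorphisms, but is not needed for the statement at hand.
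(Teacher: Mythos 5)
Your proposal is correct and follows essentially the same route as the paper: it invokes Corollary~\ref{cor:VeroneseDialg} to identify $(\di\Jord)^{[2]}$ with $\di(\Jord^{[2]})$ and then uses that $\Jord^{[2]}$ is a quotient of $\JTS$, so that $\di(\Jord^{[2]})$ is a quotient of $\di\JTS$. The only difference is that you spell out explicitly why the Hadamard product preserves surjections, which the paper leaves implicit.
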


Let us give a short alternative proof of this result. Indeed, by Corollary \ref{cor:VeroneseDialg}, we have 
 \[
(\di\Jord)^{[2]}=\di(\Jord^{[2]}) ,
 \]
and $\Jord^{[2]}$ is a quotient of $\JTS$, so $\di(\Jord^{[2]})$ is a quotient of $\di\JTS$, as required.

\section{Operads of pure homotopy algebras and Koszul duality}\label{sec:PureHomotopy}

In this section, we explain how to compute Koszul duals of Veronese powers of quadratic operads. Let us begin with stating a result on Koszul duals of operads $\Ass$ and $\Com$ which can be proved by a direct calculation. 

\begin{proposition} \leavevmode
\begin{itemize}
\item The Koszul dual operad of $\Ass^{[k]}$ is the operad controlling
  a particular class of $A_\infty$-algebras, those with only nonzero
  structure operation being the one of arity $k+1$, or, equivalently,
  the operad of partially associative $(k+1)$-ary algebras with the
  structure operation of homological degree $k-1$.
\item The Koszul dual operad of $\Com^{[k]}$ is the operad controlling a particular class of $L_\infty$-algebras, those with only nonzero structure operation being the one of arity $k+1$, or, equivalently, the operadic desuspension of the the operad of Lie $(k+1)$-algebras \cite{HaWa}.
\end{itemize}
\end{proposition}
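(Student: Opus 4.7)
The plan is to compute each Koszul dual directly from the quadratic presentation of the corresponding Veronese power via the standard formula $\calP^! = \calT(E^\vee)/(R^\perp)$, where $E^\vee$ is the appropriate operadic (de)suspension of the linear dual of the generating space and $R^\perp$ is the orthogonal of $R$ under the canonical pairing on the arity-$(2k+1)$ component of the free operad. A preliminary step is to verify that the Veronese powers are quadratic: both $\Ass$ and $\Com$ admit a quadratic shuffle Gr\"obner basis with left-comb normal forms, so all weight-$2k$ normal tree monomials lie in $\calT(\calX)^{[k]}$; Proposition~\ref{prop:PBWVeronese} then shows that $\Ass^{[k]}$ and $\Com^{[k]}$ are quadratic Koszul and in particular allows us to apply the duality formula.

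For $\Com^{[k]}$, the generating space $E = \Com_{(k)}$ is the one-dimensional trivial $S_{k+1}$-representation placed in arity $k+1$ and homological degree $0$, spanned by the iterated product $\mu(a_1,\ldots,a_{k+1}) = a_1 \cdots a_{k+1}$. The tree description of the free symmetric operad gives
\[
\dim \calT(E)(2k+1) = \frac{(2k+1)!}{k!\,(k+1)!} = \binom{2k+1}{k+1},
\]
while Proposition~\ref{prop:LeftNormed} yields $\dim \Com^{[k]}(2k+1) = \dim \Com(2k+1) = 1$. Hence $R$ is of codimension $1$ and $R^\perp \subset \calT(E^\vee)(2k+1)$ is one-dimensional. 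Since $E^\vee$ is the sign $S_{k+1}$-representation placed in the homological degree fixed by operadic suspension, the Koszul dual is generated by one antisymmetric $(k+1)$-ary operation satisfying a single quadratic relation. Up to scalar this relation must be the generalised Jacobi identity defining Lie $(k+1)$-algebras, so $(\Com^{[k]})^!$ is the operadic desuspension of the Hanlon--Wachs operad.

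For $\Ass^{[k]}$, the analysis is cleanest in the nonsymmetric setting, where the Veronese power is generated by a single $(k+1)$-ary operation $m$ subject to the total associativity relations $m \circ_i m = m \circ_j m$ for all $i, j \in \{1,\ldots,k+1\}$. In the $(k+1)$-dimensional space $\calT(E)(2k+1)$ spanned by $m \circ_i m$, the relation subspace has codimension one, so $R^\perp$ is one-dimensional; the sign convention of the canonical pairing identifies it as the partial associativity relation $\sum_{i=1}^{k+1} \pm\, m \circ_i m = 0$ with the generator placed in homological degree $k-1$. Passing from the nonsymmetric to the symmetric setting amounts to tensoring with regular representations of the symmetric groups and preserves both Veronese powers and Koszul duals, so $(\Ass^{[k]})^!$ is the symmetric operad of partially associative $(k+1)$-ary algebras with operation in degree $k-1$, which is precisely the pure $A_\infty$-operad with only $m_{k+1}$ nonzero.

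The main obstacle is the careful bookkeeping of signs and homological degrees introduced by the operadic (de)suspension. One has to check that the canonical pairing used to compute $R^\perp$ recovers the standard $A_\infty$ and $L_\infty$ signs appearing in the partial associativity and generalised Jacobi relations, and that the Koszul dual generator of arity $k+1$ lies in homological degree $k-1$, in agreement with the conventions $\deg(m_n) = \deg(\ell_n) = n - 2$.
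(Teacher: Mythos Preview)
Your proposal is correct and is precisely the ``direct calculation'' the paper alludes to but does not write out: the paper introduces this proposition with the phrase ``which can be proved by a direct calculation'' and supplies no proof at all. Your dimension counts ($\dim\calT(E)(2k+1)=\binom{2k+1}{k+1}$ in the commutative case, $k+1$ in the nonsymmetric associative case) are right, the identification of $R^\perp$ as one-dimensional in each case is right, and the reduction to the nonsymmetric setting for $\Ass$ is the natural move. The only point where your plan is slightly elliptical is the sentence ``Up to scalar this relation must be the generalised Jacobi identity'': since $R^\perp$ is one-dimensional you only need to \emph{check} that the generalised Jacobi element pairs to zero with each totally-associative relation, which is a one-line computation once the suspension signs are in place; you acknowledge this bookkeeping as the main obstacle, which is fair.
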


The fact that $\big(\Ass^{[k]}\big)^!$ has to do 
with $A_\infty$-algebras and $\big(\Com^{[k]}\big)^!$ has to do with
$L_\infty$-algebras makes one expect that a general statement relating
Veronese powers to homotopy algebras exists. The goal of this section
is to confirm this guess. For that, we introduce a new general notion of pure
homotopy $\calP$-algebras, and then prove that (appropriate quadratic versions of) Veronese powers of operads 
and operads controlling pure homotopy algebras are exchanged by Koszul duality. 

\begin{definition} Let $k>0$ be an integer.
\begin{itemize}
\item Let $\calQ$ be a connected weight graded cooperad. The \emph{weight $k$ Koszul dual operad} $\calQ^{\ac}_{[k]}$ is the largest quotient dg operad of the cobar complex $\Omega(\calQ)=(\calT(s^{-1}\overline{\calQ}),d_{\Omega(\calQ)})$ generated (as a non dg operad) by $s^{-1}\overline{\calQ}_{(k)}$. 
\item Let $\calP$ be a connected weight graded operad. The
  \emph{weight $k$ Koszul dual cooperad} $\calP^{\ac}_{[k]}$ is the
  smallest dg subcooperad of the bar complex
  $\mathsf{B}(\calP)=(\calT(s\overline{\calP}),d_{\mathsf{B}(\calP)})$
 cogenerated (as a non dg cooperad) by $s\overline{\calP}_{(k)}$.
\end{itemize}
\end{definition}

The following proposition makes this definition more explicit. 

\begin{proposition}\label{prop:Pure}\leavevmode
\begin{itemize}
\item For a connected weight graded cooperad $\calQ$ and an integer $k>0$, the operad $\calQ^{\ac}_{[k]}$ is the (non dg) operad that can be presented via generators  $s^{-1}\overline{\calQ}_{(k)}$ and relations which are desuspended quadratic co-relations on $\overline{\calQ}_{(k)}$ in $\calQ$.
\item For a connected weight graded operad $\calP$ and an integer $k>0$, the cooperad $\calP^{\ac}_{[k]}$ is the (non dg) cooperad that can be presented via co-generators $s\overline{\calP}_{(k)}$ and co-relations which are suspended quadratic relations on $\overline{\calP}_{(k)}$ in $\calP$.
\end{itemize}
  \end{proposition}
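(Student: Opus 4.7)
My plan is to establish both statements by direct inspection of the cobar and bar complexes, treating the operad case in detail and deriving the cooperad case by a parallel analysis. The structural fact underlying everything is that the weight grading inherited from $\calQ$ (respectively $\calP$) is preserved both by the cobar (resp.\ bar) differential and by operadic composition, so compositions of elements of $s^{-1}\overline{\calQ}_{(k)}$ in $\Omega(\calQ) = \calT(s^{-1}\overline{\calQ})$ land only in weights of the form $mk$ with $m\geq 1$.

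Consequently, for a dg quotient of $\Omega(\calQ)$ to be generated as a non-dg operad by $s^{-1}\overline{\calQ}_{(k)}$, the image of every $s^{-1}\overline{\calQ}_{(j)}$ with $j \neq k$ must vanish: for $j$ not a positive multiple of $k$ this follows from the weight argument, and for $j = mk$ with $m \geq 2$ one cannot identify an element of $s^{-1}\overline{\calQ}_{(mk)}$ with a composition of $m$ weight-$k$ generators, since such a putative identification would mix distinct homological degrees. Letting $I_0$ be the non-dg ideal generated by all $s^{-1}\overline{\calQ}_{(j)}$ with $j \neq k$, any admissible dg ideal $I$ must therefore contain $I_0$.

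The next task is to close $I_0$ under the cobar differential. For $\alpha \in \overline{\calQ}_{(j)}$, the value $d_{\Omega(\calQ)}(s^{-1}\alpha)$ is a sum $\sum \pm s^{-1}\beta \circ_l s^{-1}\gamma$ over the infinitesimal decomposition of $\alpha$ in $\calQ$, with $\mathrm{wt}(\beta) + \mathrm{wt}(\gamma) = j$ and both weights positive. When $j \neq 2k$, at least one factor has weight different from $k$, so every term already lies in $I_0$; only for $j = 2k$ does one pick up new relations, namely the ``diagonal'' summand consisting of those terms with both $\beta$ and $\gamma$ of weight $k$. Let $I$ be the ideal generated by $I_0$ together with all these diagonal summands. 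Dg-closedness of $I$ is verified by the Leibniz rule: for $\beta \in \overline{\calQ}_{(k)}$, the factors in $d_{\Omega(\calQ)}(s^{-1}\beta)$ have weights strictly less than $k$, hence lie in $I_0 \subseteq I$. Thus $I$ is the minimal dg ideal with the required generation property, and $\calQ^{\ac}_{[k]} = \Omega(\calQ)/I$ carries the zero differential and admits, as a non-dg operad, generators $s^{-1}\overline{\calQ}_{(k)}$ together with relations equal to the image of $\overline{\calQ}_{(2k)}$ under the $(k,k)$-restricted infinitesimal decomposition (desuspended and transferred to $\calT(s^{-1}\overline{\calQ}_{(k)})_{(2k)}$); this image is precisely the desuspended quadratic co-relations on $\overline{\calQ}_{(k)}$ in $\calQ$ asserted in the statement.

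For the cooperad part, a dual analysis inside $\mathsf{B}(\calP) = \calT^c(s\overline{\calP})$ applies. Any sub-cooperad cogenerated as a non-dg cooperad by $s\overline{\calP}_{(k)}$ must lie inside $\calT^c(s\overline{\calP}_{(k)})$ by the same weight argument; its weight-$k$ component equals $s\overline{\calP}_{(k)}$, while its weight-$2k$ component consists of those $2$-vertex tree tensors in $\calT^c(s\overline{\calP}_{(k)})_{(2k)}$ whose bar codifferential vanishes. Since that codifferential is, up to suspension signs, the quadratic operadic composition $\calT(\overline{\calP}_{(k)})_{(2k)} \to \overline{\calP}_{(2k)}$ of $\calP$, the resulting kernel is the suspended quadratic relation space on $\overline{\calP}_{(k)}$ in $\calP$, as claimed. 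The main obstacle I anticipate in both arguments is verifying that no further independent (co-)relations are forced in higher weights $mk$ for $m \geq 3$; this is the weight-graded analogue of the familiar fact from classical Koszul duality (the $k=1$ case treated in \cite{LoVa}) that higher (co-)relations reduce to iterated compositions of the quadratic ones, and the same inductive argument adapts verbatim once the weight-preservation observations above are in place.
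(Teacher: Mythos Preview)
Your argument is correct and follows essentially the same route as the paper's proof: identify the non-dg ideal $I_0$ generated by the off-weight cogenerators, observe that only the differentials of weight-$2k$ elements contribute new relations modulo $I_0$, and read off the resulting quadratic presentation with zero induced differential; the cooperad case is dual. Your closing worry about ``further independent (co-)relations in weights $mk$ for $m\ge 3$'' is unnecessary in the operad case: you have already exhibited the minimal dg ideal $I$ explicitly as an ordinary operadic ideal of the free operad $\calT(s^{-1}\overline{\calQ})$, so the quotient \emph{is} by construction the free operad on $s^{-1}\overline{\calQ}_{(k)}$ modulo the quadratic relations you named, with nothing further to check.
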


\begin{proof}
To prove the first statement, we note that the underlying operad of
$\Omega(\calQ)$ is free, so to create a quotient generated by
$s^{-1}\overline{\calQ}_{(k)}$, we have to quotient out all elements
$s^{-1}\overline{\calQ}_{(l)}$, $l\ne k$. Furthermore, any dg ideal of
$\Omega(\calQ)$ containing all elements of $s^{-1}\overline{\calQ}$ of
weight different from $k$ must contain all the differentials of these
elements, and so the smallest such ideal coincides with the (usual)
ideal of $\calT(s^{-1}\overline{\calQ})$ generated by all elements of
$s\overline{\calQ}$ of weight different from $k$ and their
differentials. Note that the differential of any homogeneous element
of weight different from $2k$ is a combination of tree tensors
involving at least one element of weight different from $k$, so modulo
the ideal generated by elements of weight different from $k$ such an
element is congruent to zero. Also, modulo the same ideal the
differential of any homogeneous element of weight $2k$ is congruent to
the suspended image of the decomposition map $\calQ_{(2k)}\to \calT^c(\calQ_{(k)})_{(2)}$. 
These observations establish both the shape of relations and the lack of differential in the quotient. The second statement is proved analogously. 
\end{proof}

Let us give some examples of operads of pure homotopy algebras which agree with our previous observations.

\begin{proposition}\leavevmode
\begin{itemize}
\item Let $\calQ=\Com^c$ be the linear dual of\/ $\Com$. Then the
  operad $\calQ^{\ac}_{[k]}$ is the operadic suspension 
of the operad of Lie $(k+1)$-algebras of~\cite{HaWa}. 
\item Let $\calQ=\Ass^c$ be the linear dual of\/ $\Ass$. Then the operad $\calQ^{\ac}_{[k]}$ is the operad $\widetilde{\pAss^{k+1}_{-1}}$ of~\cite{MaRe15}. 
\item Suppose that $\calQ$ is quadratic cooperad (cogenerated by elements of weight one). Then the operad
  $\calQ^{\ac}_{[1]}$ is the classical quadratic dual operad
  $\calQ^{\ac}$. Similarly,  if\/ $\calP$ is a quadratic operad (generated by elements of weight one), then the
  cooperad $\calP^{\ac}_{[1]}$ 
is the classical quadratic dual cooperad~$\calP^{\ac}$.
\end{itemize}
\end{proposition}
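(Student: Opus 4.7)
The plan is to deduce all three statements from Proposition~\ref{prop:Pure}, which gives a concrete presentation of $\calQ^{\ac}_{[k]}$ by generators and relations, and of $\calP^{\ac}_{[k]}$ by cogenerators and co-relations.

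For the third statement, take $k=1$ and suppose $\calQ$ is a quadratic cooperad cogenerated in weight one. Then $\overline{\calQ}_{(1)}$ is precisely the space of cogenerators, and the quadratic co-relations on $\overline{\calQ}_{(1)}$ inside $\calQ$ are, by definition of a quadratic cooperad, the defining co-relations of $\calQ$. Desuspending, one recovers by construction the classical Koszul dual operad $\calQ^{\ac}$. The statement for a quadratic operad $\calP$ follows by dualising the same argument: the cogenerators $s\overline{\calP}_{(1)}$ are the suspended generators of $\calP$, and the suspended quadratic relations on them are precisely the co-relations defining~$\calP^{\ac}$.

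For the first two statements, I would apply Proposition~\ref{prop:Pure} with $\calQ=\Com^c$ and $\calQ=\Ass^c$, for arbitrary $k$. Since $\Com^c(n+1)=\bbk$ carries the trivial $S_{n+1}$-action and $\Ass^c(n+1)=\bbk[S_{n+1}]$, the space of generators $s^{-1}\overline{\calQ}_{(k)}$ consists in the first case of a single $(k+1)$-ary symmetric operation, and in the second case of a $(k+1)$-ary operation without symmetry, both placed in the homological degree fixed by the desuspension $s^{-1}$ together with the Koszul sign rule.

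It then remains to identify the quadratic co-relations on $\overline{\calQ}_{(k)}$ inside $\calQ$, i.e.\ the image of the restriction of the decomposition coproduct $\overline{\calQ}_{(2k)}\to\calT^c(\overline{\calQ})_{(2)}$ that lands in $\calT^c(\overline{\calQ}_{(k)})_{(2)}$. For $\calQ=\Com^c$ this image, after the desuspension together with the operadic suspension twist $\calS$, reproduces the higher Jacobi identity defining Lie $(k+1)$-algebras in the sense of~\cite{HaWa}; for $\calQ=\Ass^c$ it reproduces the single partial associativity relation defining $\widetilde{\pAss^{k+1}_{-1}}$ in~\cite{MaRe15}. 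The main obstacle, and essentially the only nontrivial content of the proof, is this final identification of co-relations: although the underlying combinatorics of the decomposition maps in $\Com^c$ (a sum over the relevant unshuffles) and in $\Ass^c$ (bookkeeping inside the regular representation) is classical, tracking all signs and symmetric group actions through the bar/cobar (de)suspensions is delicate, which is why the statements are phrased through references to~\cite{HaWa} and~\cite{MaRe15} rather than as a purely routine unwinding of the definitions.
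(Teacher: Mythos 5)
Your proposal is correct and follows exactly the route the paper takes: the paper's entire proof is that all three statements ``are obtained from Proposition~\ref{prop:Pure} by a direct inspection,'' and your unwinding of the generators, degrees, and quadratic (co-)relations in each case is precisely that inspection, carried out in somewhat more detail. The sign/degree bookkeeping you flag as the only delicate point is likewise left implicit in the paper.
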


\begin{proof}
These are obtained from Proposition \ref{prop:Pure} by a direct inspection.
\end{proof}

In general, Veronese powers need not be quadratic, so in order to include Veronese powers in the context of Koszul duality, we should modify the definition appropriately.

\begin{definition}\label{def:qVeronese}
Let $\calP$ be a weight graded operad, and let $d>0$ be an integer. The \emph{quadratic Veronese power $q\calP^{[d]}$} is the quotient of $\calT(\calP_{(d)})$ by the ideal of all quadratic relations satisfied by $\calP_{(d)}$, that is $\calI\cap\calT(\calP_{(d)})_{(2)}$, where $\calI\subset \calT(\calP_{(d)})$ is the kernel of the evaluation map $\calT(\calP_{(d)})\to\calP$.
\end{definition}

In general, there is a surjection $q\calP^{[d]}\twoheadrightarrow\calP^{[d]}$, and an inclusion $\calP^{[d]}\hookrightarrow V_d(\calP)$, so the set-up we are dealing with is reminiscent of the situation with Manin products and Hadamard products, see \cite{Val08}.

\medskip 

We are ready to formulate the main result of this section. 

\begin{theorem}\label{th:VeronesePureKoszul}
Let $\calP$ be a connected weight graded operad, 
and let $d>0$ be an integer. We have 
 \[
\left(q\calP^{[d]}\right)^{\ac}\cong \calP^{\ac}_{[d]} .
 \]
\end{theorem}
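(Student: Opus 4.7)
The plan is to verify that both $(q\calP^{[d]})^{\ac}$ and $\calP^{\ac}_{[d]}$ admit the same presentation in terms of cogenerators and corelations, and then deduce the isomorphism from the universal property of a cooperad defined in this way.

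First, I would unwind the left-hand side using classical quadratic Koszul duality. By Definition~\ref{def:qVeronese}, $q\calP^{[d]}$ is a quadratic operad whose space of generators is $\calP_{(d)}$ (placed in Veronese weight one) and whose relation space is $R := \calI \cap \calT(\calP_{(d)})_{(2)}$, the kernel of the evaluation map restricted to Veronese weight two. The standard Koszul dual construction for quadratic operads then identifies $(q\calP^{[d]})^{\ac}$ with the subcooperad of $\calT^c(s\calP_{(d)})$ cogenerated by $s\calP_{(d)}$ with corelations $s^2 R$.

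Second, I would unwind the right-hand side by invoking Proposition~\ref{prop:Pure}. That proposition describes $\calP^{\ac}_{[d]}$ as the (non-dg) cooperad cogenerated by $s\overline{\calP}_{(d)}$ with corelations obtained by suspending the quadratic relations on $\overline{\calP}_{(d)}$ inside $\calP$. Since $\calP$ is connected and $d \ge 1$, we have $\overline{\calP}_{(d)} = \calP_{(d)}$, so the cogenerator spaces on the two sides agree. The phrase ``quadratic relations on $\calP_{(d)}$ in $\calP$'' unpacks precisely to the kernel of the evaluation map $\calT(\calP_{(d)})_{(2)} \to \calP_{(2d)}$, namely $R = \calI \cap \calT(\calP_{(d)})_{(2)}$. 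After suspension this becomes $s^2 R$, so the corelations match as well.

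Since both cooperads therefore admit the same presentation, the claimed isomorphism follows. The main point to handle carefully is the bookkeeping of the two distinct weight gradings in play — the Veronese grading on $q\calP^{[d]}$ in which $\calP_{(d)}$ sits in weight one, versus the grading on $\calP^{\ac}_{[d]} \subset \mathsf{B}(\calP)$ inherited from $\calP$ in which $\calP_{(d)}$ sits in weight $d$ — together with checking that the suspension and Koszul sign conventions implicit in Proposition~\ref{prop:Pure} match those in the standard definition of the Koszul dual of a quadratic operad. Once this matching is in place the two presentations line up unambiguously, because in both cases the entire corelation space is concentrated in a single weight (Veronese weight $2$, equivalently original weight $2d$), so there is no room for further corrections.
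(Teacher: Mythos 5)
Your argument is correct and is essentially the paper's own proof: both unwind $\bigl(q\calP^{[d]}\bigr)^{\ac}$ as the subcooperad of $\calT^c(s\calP_{(d)})$ cut out by the suspended quadratic relations $\calI\cap\calT(\calP_{(d)})_{(2)}$ and then identify this with $\calP^{\ac}_{[d]}$ via Proposition~\ref{prop:Pure}. Your extra remarks on matching the two weight gradings are a welcome elaboration of the same tautological comparison.
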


Note that this statement is valid in full generality, that is the operad $\calP$ need not be quadratic, its Veronese power need not be quadratic etc. However, it becomes particularly meaningful under some assumptions on $\calP$, for example, assuming that $\calP$ is quadratic and that $q\calP^{[d]}=\calP^{[d]}$. 

\begin{proof}
The result is stated in such a way that the proof becomes almost tautological. Indeed, the cooperad $\left(q\calP^{[d]}\right)^{\ac}$ is the subcooperad of 
$\calT^c(s\calP_{(d)})$ presented by co-relations which are suspended quadratic relations of $\calP^{[d]}$. By Proposition \ref{prop:Pure}, this cooperad is isomorphic to $\calP^{\ac}_{[d]}$.
\end{proof}

The following proposition demonstrates how our results can be applied in particular cases to produce results which are far from obvious. 

\begin{proposition}\label{prop:LieTriple}\leavevmode
\begin{itemize}
\item The operad $\mathop{\mathrm{LTS}}$ encoding Lie triple systems is Koszul. Its Koszul dual is the operad $\Com_{\infty,3}$ encoding $C_\infty$-algebras whose nonzero structure operations are in arity $3$. 
\item More generally, for every $k\ge 1$ we have $V_k(\Lie)=\Lie^{[k]}=q\Lie^{[k]}$. This operad is Koszul, and its Koszul dual is the operad $\Com_{\infty,k+1}$ encoding $C_\infty$-algebras whose nonzero structure operations are in arity $k+1$. 
\end{itemize}
\end{proposition}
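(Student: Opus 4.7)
The first bullet is the special case $k=2$ of the second combined with the identification $\LTS\cong\Lie^{[2]}$ already proved in Proposition~\ref{prop:ClassicalTS}, so I would only prove the general statement. The plan is to chain together Propositions~\ref{prop:LeftNormed} and \ref{prop:PBWVeronese} to establish $V_k(\Lie)=\Lie^{[k]}=q\Lie^{[k]}$ together with Koszulness, and then invoke Theorem~\ref{th:VeronesePureKoszul} to identify the Koszul dual.

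First, to get $V_k(\Lie)=\Lie^{[k]}$ from Proposition~\ref{prop:LeftNormed}, I would check that each $\Lie(n)$ is spanned by $S_n$-orbits of left comb products of the Lie bracket. This is classical (Dynkin's theorem on left-normed brackets) and, in the operadic language used here, is already implicit in the proof of Proposition~\ref{prop:ClassicalTS}: with respect to the reverse path degree-lex order on shuffle trees, the shuffle version of the Jacobi relation forms a quadratic Gröbner basis of $\Lie$, and the corresponding normal monomials are precisely the left comb shuffle trees.

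Next, to obtain $\Lie^{[k]}=q\Lie^{[k]}$ and Koszulness, I would apply Proposition~\ref{prop:PBWVeronese} to this Gröbner basis. The only substantive hypothesis to verify is that every weight $2k$ normal tree monomial lies in $\calT(\calX)^{[k]}$. Since the normal monomials are left combs, the point is that every left comb with $2k$ binary vertices admits a right divisor which is itself a left comb of $k$ vertices: the $k$ vertices furthest from the root form such a right divisor (its $k+1$ leaves form a suffix of the leaves of the whole tree, and the shuffle condition restricts automatically), and the complementary $k$ vertices form another left comb of $k$ vertices. Proposition~\ref{prop:PBWVeronese} then hands back a quadratic Gröbner basis for $\Lie^{[k]}$, whence Koszulness and, in particular, the quadratic presentation $\Lie^{[k]}=q\Lie^{[k]}$.

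Finally, Theorem~\ref{th:VeronesePureKoszul} gives the cooperad isomorphism $(\Lie^{[k]})^{\ac}\cong\Lie^{\ac}_{[k]}$. By Proposition~\ref{prop:Pure}, this cooperad is cogenerated by $s\overline{\Lie}_{(k)}$ with corelations the suspensions of the quadratic relations of $\Lie$ in weight $2k$; dualising, the Koszul dual operad is the quotient of $\Omega(\Lie^{\ac})=\Com_\infty$ by the ideal generated by all generating operations of arity $\ne k+1$, which is by definition the operad $\Com_{\infty,k+1}$. The one point of the argument that is not completely formal is the combinatorial verification that every weight $2k$ left comb decomposes as a shuffle composition of two weight $k$ left combs while respecting the shuffle labelling at both pieces; I expect this to be the main place where one must be slightly careful, although it is easy once written out.
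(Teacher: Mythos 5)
Your proposal is correct and follows essentially the same route as the paper: the paper likewise deduces $V_k(\Lie)=\Lie^{[k]}$ from Proposition~\ref{prop:LeftNormed}, obtains Koszulness and quadraticity (hence $\Lie^{[k]}=q\Lie^{[k]}$) from the argument of Proposition~\ref{prop:ClassicalTS} (i.e.\ Proposition~\ref{prop:PBWVeronese} applied to the quadratic Gr\"obner basis of $\Lie$ whose normal monomials are left combs), and then identifies the Koszul dual via Theorem~\ref{th:VeronesePureKoszul}. The only difference is that you spell out the verification that every weight $2k$ left comb splits as a composition of two weight $k$ left combs, which the paper leaves implicit by citing the earlier argument.
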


\begin{proof}
The first statement is the particular case $k=2$ of the second one. To prove the second one, we note that from the argument of Proposition \ref{prop:ClassicalTS} it follows that the operad $\Lie^{[k]}$ is Koszul, and in particular quadratic, so  $\Lie^{[k]}=q\Lie^{[k]}$. Also, we already know from Proposition \ref{prop:LeftNormed} that
$V_k(\Lie)=\Lie^{[k]}$. Finally, Theorem~\ref{th:VeronesePureKoszul}
 implies that $\left(q\Lie^{[k]}\right)^{\ac}\cong \Lie^{\ac}_{[k]}$. The suspended dual of the latter is manifestly $\Com_{\infty,k+1}$, which completes the proof.   
\end{proof}

We conclude this section with an amusing corollary showing how Bernoulli numbers arise in dimension formulas for pure $\Com_{\infty}$-algebras. For the case of pure $\Lie_\infty$-algebras, we refer the reader to \cite[Example~4.3.5]{Wa07} (quite remarkably, in that case there is a simple closed dimension formula $\dim\Lie_{\infty,3}(2n-1)=((2n-3)!!)^2$).  

\begin{corollary}
For each $n\ge 1$, we have 
 \[
\dim\Com_{\infty,3}(2n-1)=\frac{2^{2n}(2^{2n}-1)|B_{2n}|}{2n},
 \]
where $B_{2n}$ is the $2n$-th Bernoulli number. 
\end{corollary}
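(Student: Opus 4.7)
The strategy is the standard one of applying the Ginzburg--Kapranov functional equation to extract dimensions from Koszul duality. By Proposition~\ref{prop:LieTriple} the operad $\Com_{\infty,3}$ is the Koszul dual of $\Lie^{[2]}$, and both are Koszul. For a Koszul pair $(\calP,\calP^{\ac})$ with $\calP$ concentrated in homological degree zero, the corresponding exponential generating series, each computed as the Euler--Poincar\'e characteristic, satisfy
\[
f_{\Com_{\infty,3}}\bigl(-f_{\Lie^{[2]}}(-x)\bigr)=x.
\]
So the proof reduces to computing $f_{\Lie^{[2]}}$, inverting, and comparing with the Taylor expansion of $\tanh$.

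First I would compute $f_{\Lie^{[2]}}$. By Proposition~\ref{prop:ClassicalTS}, $\Lie^{[2]}=V_2(\Lie)$, and $\dim\Lie(n)=(n-1)!$, so only odd arities contribute to $\Lie^{[2]}$, with $\dim\Lie^{[2]}(2n-1)=(2n-2)!$. This gives
\[
f_{\Lie^{[2]}}(x)=\sum_{n\ge 1}\frac{(2n-2)!}{(2n-1)!}x^{2n-1}=\sum_{n\ge 1}\frac{x^{2n-1}}{2n-1}=\mathop{\mathrm{arctanh}}(x).
\]
Since $\mathop{\mathrm{arctanh}}$ is an odd function, $-f_{\Lie^{[2]}}(-x)=\mathop{\mathrm{arctanh}}(x)$, and the functional equation collapses to $f_{\Com_{\infty,3}}(\mathop{\mathrm{arctanh}}(x))=x$, whence $f_{\Com_{\infty,3}}(x)=\tanh(x)$. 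Substituting the classical expansion
\[
\tanh(x)=\sum_{n\ge 1}\frac{2^{2n}(2^{2n}-1)\,B_{2n}}{(2n)!}\,x^{2n-1}
\]
and extracting the coefficient of $x^{2n-1}/(2n-1)!$ yields the claimed formula.

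The only point that genuinely needs attention is the sign bookkeeping. The generating series above is an Euler--Poincar\'e series rather than a na\"ive dimension series, and the coefficients of $\tanh$ alternate because $B_{2n}=(-1)^{n-1}|B_{2n}|$. However, the component $\Com_{\infty,3}(2n-1)$ is built from $n-1$ compositions of a single generator of fixed homological degree, and hence is itself concentrated in a single homological degree of parity $(-1)^{n-1}$. The two sign factors $(-1)^{n-1}$ cancel, leaving
\[
\dim\Com_{\infty,3}(2n-1)=(2n-1)!\cdot\frac{2^{2n}(2^{2n}-1)|B_{2n}|}{(2n)!}=\frac{2^{2n}(2^{2n}-1)|B_{2n}|}{2n},
\]
as required. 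This sign verification, together with the conventions on operadic (de)suspension fixed in Section~\ref{sec:recoll}, is the only non-mechanical step in the argument.
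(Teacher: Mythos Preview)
Your proof is correct and follows essentially the same route as the paper's: use Proposition~\ref{prop:LieTriple} for Koszulness, compute $\dim\LTS(2n-1)=(2n-2)!$, apply the Ginzburg--Kapranov functional equation, and read off the answer from a classical Bernoulli-number expansion. The only cosmetic difference is where the sign $(-1)^{n-1}$ is absorbed: the paper places it on the $\LTS$ side, obtaining $g(t)=\arctan(t)$ and reading dimensions directly from the positive coefficients of $\tan(t)$, whereas you keep the plain dimension series $\operatorname{arctanh}(x)$ for $\Lie^{[2]}$ and recover the signs afterwards from the Euler--Poincar\'e series $\tanh(x)$ via the homological degree count.
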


\begin{proof}
By Proposition~\ref{prop:LieTriple} the operad of Lie triple systems is Koszul, and its Koszul dual cooperad is $\Lie^{\ac}_{[2]}$. 
That cooperad is cogenerated by ternary operations of homological degree $1$, thus, its component of arity $2n-1$ 
is concentrated in homological degree $n-1$. The Ginzburg--Kapranov functional equation relating the Poincar\'e series  
of an operad and its Koszul dual \cite{GiKa} shows that the generating series of dimensions of the operad $\LTS$ 
is the compositional inverse of the series
 \[
\sum_{n\ge 1}\frac{(-1)^{n-1}\dim\Com_{\infty,3}(2n-1)}{(2n-1)!}t^{2n-1} . 
 \]
Equivalently, the series
 \[
g(t)=\sum_{n\ge 1}\frac{(-1)^{n-1}\dim\LTS(2n-1)}{(2n-1)!}t^{2n-1}  
 \]
with modified signs is the compositional inverse of the series
 \[
\sum_{n\ge 1}\frac{\dim\Com_{\infty,3}(2n-1)}{(2n-1)!}t^{2n-1} .
 \]
For the operad $\LTS$ of Lie triple systems, we have $\LTS\cong V_2(\Lie)$, so $\dim\LTS(2n-1)=\dim\Lie(2n-1)=(2n-2)!$ for each $n\ge 1$. Thus, 
 \[
g(t)=\sum_{n\ge 1}\frac{(-1)^{n-1}(2n-2)!}{(2n-1)!}t^{2n-1}=\sum_{n\ge 1}\frac{(-1)^{n-1}t^{2n-1}}{2n-1}=\arctan(t) ,
 \]
so from the well known formula \cite[Appendix~B]{MiSta}
 \[
\tan(t)=\sum_{n\ge 1}\frac{2^{2n}(2^{2n}-1)|B_{2n}|}{(2n)!}t^{2n-1} 
 \]
it immediately follows that
 \[
\dim\Com_{\infty,3}(2n-1)=\frac{2^{2n}(2^{2n}-1)|B_{2n}|}{2n}, 
 \]
thus completing the proof.
\end{proof}

\section{Pure homotopy Lie algebras and their mock versions}\label{sec:mockLieInfty}

In this section, we shall discuss the mock (= ``wrong homological degree'') versions of the operad of pure homotopy Lie algebras. Namely, we shall show that unexpected extra relations occur in that operad, forcing it to not be Koszul. We believe it is important to emphasize this difference, since in a range of examples in the literature an algebraic structure referred to as ``strongly homotopy Lie $n$-algebra'' in fact is rather a mock strong homotopy algebra, since its structure operation has homological degree zero. 
In most such cases, existing literature seems to either silently ignore the matter of homological degrees of operations, see e.g. \cite{AI,Dzhu04,Dzhu05,GRR,Kis07,Kis04,LMSZ} or to express a belief that ignoring homological degrees does not change much, e.g. \cite{Bru11} notes the discrepancy but states ``A little care over the sign factors appearing in the constructions would be required. However, this should be very tractable.''. While several available sources mention that the mock $n$-algebras are not really related to homotopy Lie algebras,  see e.g.~\cite{Schr1,Schr2}, there is no clear indication as to what the difference between the two notions is. We believe that the results of this section emphasize such difference.

\smallskip 

We begin with a definition of degree $d$ versions of totally associative commutative algebras and of $n$-Lie algebras. 

\begin{definition}\label{def:tComAndNLie}\leavevmode
\begin{itemize}
\item The operad $\tCom^n_d$ of totally associative commutative $n$-ary algebras with operation in homological degree~$d$ is generated by one element $\mu\in\tCom^n_d(n)$ of degree $d$ which is fully symmetric, that is
 $
\mu\vladimirsdot\sigma = \mu 
 $
for all $\sigma\in S_n$, and
 \[
\mu\circ_i\mu=\mu\circ_j\mu
 \]
for arbitrary $1 \leq i,j \leq n$.
\item  The operad $\Lie^n_d$ of $n$-Lie algebras with operation in homological degree $d$ is generated by one element $\ell\in\Lie^n_d(n)$ of degree $d$ 
which is fully antisymmetric, that is 
 $
\ell\vladimirsdot\sigma=\sgn(\sigma)\ell
 $
for all $\sigma \in S_n$, and 
 \[
\sum_\delta \sgn(\delta)(\ell\circ_1\ell)\vladimirsdot\delta=0,
 \]
where the summation is taken over all $(n,n-1)$-unshuffles $\delta$, that is permutations in $S_{2n-1}$ for which $\delta(1)<\cdots\delta(n)$ and $\delta(n+1)<\cdots<\delta(2n-1)$.
\end{itemize}
\end{definition}

For example, the operad $\tCom^2_0$ is the operad encoding the usual commutative associative algebras, and the operad $\Lie^2_0$ is the operad encoding Lie algebras. 
In general, for $n\geq 2$, the operad $\Lie^n_{n-2}$ encodes $L_\infty$ algebras~\cite{LaMa95} without differential whose only nontrivial operation is in arity $n$.

\medskip 

For the same technical reasons as in~\cite{MaRe15} we introduce
auxiliary operads by means of operadic suspension:
\[
\widetilde{\tCom}^n_d := \calS\hadam \tCom^n_{d+n-1} \ \hbox { and } \ 
\tLie^n_d : = \calS\hadam \Lie^n_{d+n-1}.
\]
By a direct calculation in the endomorphism operad, we obtain the following result. 

\begin{proposition}\label{prop:tComNLieSusp}\leavevmode
\begin{itemize}
\item The operad $\widetilde{\tCom}^n_d$ is generated by a degree $d$ fully antisymmetric
operation $\mu$ of arity $n$ satisfying
 \[
(-1)^{(i-1)d} \mu\circ_i\mu=(-1)^{(j-1)d} \mu\circ_j\mu
 \]
for arbitrary $1 \leq i,j \leq n$.
\item The operad $\tLie^n_d$ is generated by a degree $d$ fully symmetric operation $\ell$ of arity $n$
satisfying 
 \[
\sum_\delta(\ell\circ_1\ell)\vladimirsdot\delta=0,
 \]
where the summation runs over all $(n,n-1)$-unshuffles $\delta$.
\end{itemize}
\end{proposition}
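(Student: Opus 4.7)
The plan is to unpack the definition $\calS \hadam \calP$ explicitly for $\calP = \tCom^n_{d+n-1}$ and $\calP = \Lie^n_{d+n-1}$ and to read off the presentation of the result. The starting observation is that each arity component $\calS(n)$ is one-dimensional, concentrated in homological degree $1-n$, and carries the sign representation of $S_n$, because permuting $n$ copies of the odd generator $s$ picks up $\sgn(\sigma)$. Choose a generator $e_n\in\calS(n)$ normalised by $e_n(s^{\ot n})=s$; then the Hadamard product $\calS \hadam \calP$ shifts the degree of every $\calP(n)$ by $1-n$ and tensors its $S_n$-representation with the sign character. Applied to the fully symmetric (resp.\ fully antisymmetric) generator of $\tCom^n_{d+n-1}$ (resp.\ $\Lie^n_{d+n-1}$) of degree $d+n-1$, this produces a fully antisymmetric (resp.\ fully symmetric) element $\tilde\mu := e_n \ot \mu$ (resp.\ $\tilde\ell := e_n \ot \ell$) of degree $d$, which takes care of the arity, degree, and symmetry claims in both bullet points.

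Next I would compute the partial compositions $e_n \circ_i e_n$ inside $\calS$: evaluating both sides on $s^{\ot(2n-1)}$ and applying the Koszul rule (the odd operation $e_n$, of degree $1-n$, must pass through $i-1$ copies of the odd element $s$) gives $e_n \circ_i e_n = \pm e_{2n-1}$ with an $i$-dependent sign. Combined with the $i$-independent Koszul sign produced by the Hadamard composition formula when the $\calS$-factor of the second argument is commuted past the $\calP$-factor of the first, the unsigned identity $\mu \circ_i \mu = \mu \circ_j \mu$ of Definition~\ref{def:tComAndNLie} translates into the claimed signed identity $(-1)^{(i-1)d}\, \tilde\mu \circ_i \tilde\mu = (-1)^{(j-1)d}\, \tilde\mu \circ_j \tilde\mu$ after collecting and simplifying the accumulated signs modulo $2$.

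The argument for $\tLie^n_d$ is parallel, with the key additional feature that the $S_{2n-1}$-action on $\calS(2n-1)$ is the sign character; consequently, the factor $\sgn(\delta)$ present in the unsuspended Jacobi relation $\sum_\delta \sgn(\delta)(\ell \circ_1 \ell)\vladimirsdot\delta = 0$ is cancelled upon tensoring with $e_{2n-1}$, producing the unsigned relation $\sum_\delta (\tilde\ell \circ_1 \tilde\ell)\vladimirsdot\delta = 0$ claimed for $\tLie^n_d$. Finally, that no further relations appear in either case follows because the map $\calP(n) \to (\calS\hadam\calP)(n)$, $x \mapsto e_n \ot x$, is a $\bbk$-linear isomorphism which intertwines the twisted composition products, so the presentation of $\calP$ by generators and relations transfers to a presentation of $\calS \hadam \calP$ by the mechanical procedure above.

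The main technical obstacle is the meticulous bookkeeping of signs: ensuring that the intrinsic sign in $e_n \circ_i e_n$, the Koszul sign from swapping $\calS$-factors past $\calP$-factors in the Hadamard composition, and the sign action of $S_n$ on $\calS(n)$ combine to reproduce exactly the exponents $(i-1)d$ written in the statement rather than a near variant. The case $n=2$, $d=0$ --- where $\tLie^2_0$ ought to be the ordinary Lie operad and the unshuffle sum reduces to the classical Jacobi identity --- serves as a useful sanity check for the normalisation.
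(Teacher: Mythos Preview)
Your proposal is correct and is exactly the approach the paper takes: the paper's entire proof consists of the single sentence ``By a direct calculation in the endomorphism operad, we obtain the following result.'' You have simply written out that direct calculation in detail---identifying $\calS(n)$ as the one-dimensional sign representation in degree $1-n$, tracking how the Hadamard product with $\calS$ shifts degrees and twists symmetry types, and chasing the Koszul signs through $e_n\circ_i e_n$ and the Hadamard composition. Your closing caveat about sign bookkeeping is well placed: the exponent $(i-1)d$ in the statement depends on the precise normalisation of $e_n$ and the convention for the Hadamard composition, so anchoring the computation to a sanity check (such as the $n=2$ case) is exactly the right way to pin down the constants.
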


\begin{proposition}\label{prop:KoszulDuals}
One has the isomorphisms
 \[
(\tLie^n_d)^{\ac} \cong (\tCom^n_{1+d})^c\ \text{  and } \  (\Lie^n_d)^{\ac} \cong (\ttCom^n_{1+d})^c.
 \]
(Here the superscript ${}^c$ denotes component-wise dualization of~$\calP$ followed by reversing homological degrees.) 
Equivalently, using the language of Koszul dual operads, 
 \[
(\Lie^n_d)^! \cong \tCom^n_{n-2-d}\ \text{  and } \ (\tLie^n_d)^! \cong \ttCom^n_{n-2-d} .
 \]
\end{proposition}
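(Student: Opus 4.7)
The plan is to reduce all four isomorphisms to a single computation of $(\Lie^n_d)^!$ and then transport the result using operadic suspension.

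First step: computing $(\Lie^n_d)^!$ directly. Both $\Lie^n_d$ and $\tCom^n_{n-2-d}$ are quadratic operads with a single generator carrying a one-dimensional $S_n$-representation (sign versus trivial) together with a single $S_{2n-1}$-orbit of quadratic relations. Standard Koszul duality for quadratic operads then reduces the task to verifying two things: that the sign twist and degree shift predicted by the duality match those in the statement, and that the generalized Jacobi relations of $\Lie^n_d$ span the annihilator of the totally associative commutative relations of $\tCom^n_{n-2-d}$ under the natural pairing on $\calT(E)(2n-1)$. The orthogonality can be checked by pairing the vector $\sum_\delta \sgn(\delta)(\ell\circ_1\ell)\cdot\delta$ against each relation $\mu\circ_i\mu-\mu\circ_j\mu$ in a basis of $\calT(E)(2n-1)$ indexed by $(n,n-1)$-unshuffles, followed by a dimension count confirming that these annihilators are complementary rather than merely orthogonal.

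Second step: deducing the remaining identities. Since $\tLie^n_d$ and $\ttCom^n_d$ are defined as the operadic suspensions $\calS\hadam\Lie^n_{d+n-1}$ and $\calS\hadam\tCom^n_{d+n-1}$, and Koszul duality commutes with operadic suspension in the sense that $(\calS\hadam\calP)^! \cong \calS\hadam\calP^!$, the second operadic identity follows from the first after shifting $d$. The two cooperad versions then follow from the standard conversion between the Koszul dual operad $\calP^!$ and the Koszul dual cooperad $\calP^{\ac}$; this conversion accounts for the different degree shift ($d\mapsto 1+d$ rather than $d\mapsto n-2-d$), which reflects the single suspension $s$ appearing in the bar construction and the homological reversal implicit in $(-)^c$.

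The main obstacle will be sign bookkeeping. One has to verify that the signs $\sgn(\delta)$ appearing in the generalized Jacobi identity pair consistently with the signs $(-1)^{(i-1)d}$ of Proposition~\ref{prop:tComNLieSusp}. Since both families of signs arise from the same operadic suspension acting on odd-degree generators, this is a mechanical but delicate check; committing to a fixed convention for $\calS$ and for partial composition of graded operations at the outset should keep the computation tractable.
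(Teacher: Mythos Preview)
Your strategy is exactly the paper's: prove one of the four isomorphisms by hand and transport the others via operadic suspension and the $\calP^{!}/\calP^{\ac}$ dictionary. The paper's own proof is a single sentence (``direct inspection'' for one isomorphism, ``standard properties of Koszul duality and operadic suspension'' for the rest), so your outline is in fact more explicit than what is written there; the orthogonality-plus-dimension-count argument you sketch for the base case is the standard way to unpack ``direct inspection.''

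One small caution on the transport step: with the paper's convention $\calS=\End_{\bbk s}$, $|s|=1$, the compatibility of Koszul duality with suspension reads $(\calS\hadam\calP)^{!}\cong\calS^{-1}\hadam\calP^{!}$, not $\calS\hadam\calP^{!}$. You can see this by tracking degrees: starting from $(\Lie^n_d)^{!}\cong\tCom^n_{n-2-d}$ and writing $\tLie^n_d=\calS\hadam\Lie^n_{d+n-1}$, your formula would give a generator of the wrong degree for $(\tLie^n_d)^{!}$, whereas inserting $\calS^{-1}$ lands exactly on $\ttCom^n_{n-2-d}$. This is purely a conventional bookkeeping point and does not affect the structure of your argument.
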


\begin{proof}
The first isomorphism is established by direct inspection; the others follow from the standard properties of Koszul duality and operadic suspension.
\end{proof}

\begin{theorem}\label{th:MockTCom}
The operad $\tLie^n_d$ is Koszul if and only if $d$ is odd; the operad $\Lie^n_d$ is Koszul if and only if $n$ and $d$ have the same parity.
Equivalently, the operad $\tCom^n_d$ is Koszul if and only if $d$ is even; the operad $\ttCom^n_d$ is Koszul if and only if $n$ and $d$ have
different parities.
\end{theorem}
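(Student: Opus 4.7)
The plan is first to reduce the four statements to a single claim by combining Proposition~\ref{prop:KoszulDuals} with the two definitional identities $\widetilde{\tCom}^n_d = \calS\hadam\tCom^n_{d+n-1}$ and $\tLie^n_d = \calS\hadam\Lie^n_{d+n-1}$. Since both Koszul duality and the Hadamard product with the invertible operad $\calS$ preserve Koszulness, a direct parity accounting on the four conditions shows them mutually equivalent, so it suffices to prove just one; I will work with the claim that $\Lie^n_d$ is Koszul if and only if $n$ and $d$ have the same parity.

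Next, I would observe that, for fixed $n$, the Koszulness of $\Lie^n_d$ depends only on $d\bmod 2$. The bar complex $\mathsf{B}(\Lie^n_d)=\calT^c(s\overline{\Lie^n_d})$ has its suspended generator $s\ell$ in homological degree $d+1$, and all Koszul signs arising in the bar differential are monomials of the form $(-1)^{(d+1)\cdot\star}$, where $\star$ depends only on the tree combinatorics. Replacing $d$ by $d+2$ changes none of these signs, so the bar complexes for $d$ and $d+2$ are isomorphic as chain complexes up to a weight-dependent homological regrading, hence have the same homology, and therefore are Koszul simultaneously. Thus it suffices to check Koszulness at one representative in each parity class.

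For the positive direction, I would pick the representative $d=n-2$ inside the "same parity" class: the operad $\Lie^n_{n-2}$ is the operad of strict Lie $n$-algebras in the $L_\infty$-compatible degree, whose Koszulness is proved by Hanlon--Wachs \cite{HaWa}. Combined with the parity-invariance argument above, this yields Koszulness of $\Lie^n_d$ for every $d$ with $d\equiv n\pmod 2$.

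For the negative direction, I would take the representative $d=0$ when $n$ is odd, obtaining the classical Filippov $n$-Lie operad $\Lie^n_0$, whose non-Koszulness for $n\geq 3$ is established in~\cite{Val08}. When $n$ is even, I would handle the representative $d=1$ by a direct computation: exhibit a non-trivial homology class in the weight-$3$, arity-$(3n-2)$ component of the bar complex, equivalently an extra non-quadratic relation in $\Lie^n_1$ obtained by combining the $n$-Jacobi identity with itself in two different slots. For the wrong parity, the Koszul signs produced by the odd degree of $\ell$ fail to cancel, leaving an irreducible element in weight~$3$. The principal obstacle is this last step for general even $n$: carrying out the sign tracking requires either a systematic shuffle-Gr\"obner basis calculation (locating a weight-$3$ S-polynomial with non-zero normal form modulo the quadratic $n$-Jacobi relations) or an inductive homological argument using the dg structure of $\mathsf{B}(\Lie^n_d)$. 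While small cases can be verified by hand or by computer algebra, a uniform statement valid for all $n$ is the main technical content of the theorem.
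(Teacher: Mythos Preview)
Your reductions are sound: the four claims are indeed equivalent via Proposition~\ref{prop:KoszulDuals} and operadic suspension, and the parity-invariance argument for the bar complex is correct. The positive direction via Hanlon--Wachs is a legitimate alternative to the paper's route (the paper instead observes $\tCom^n_0=\Com^{[n-1]}$ and applies Proposition~\ref{prop:PBWVeronese} to get a quadratic Gr\"obner basis).

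The negative direction, however, is where the theorem actually lives, and here your proposal has a genuine gap. Your claim that non-Koszulness of the Filippov operad $\Lie^n_0$ for odd $n\geq 3$ ``is established in~\cite{Val08}'' is not correct: Vallette's paper on Manin products does not contain this result. In fact, this non-Koszulness is precisely the new content of the present paper, and Section~\ref{sec:inverse} even shows that the usual Poincar\'e-series positivity test is \emph{inconclusive} for $n=3$, so one cannot expect an off-the-shelf argument. For the remaining case ($n$ even, $d=1$) you acknowledge you do not have a proof either; note also that the phrase ``an extra non-quadratic relation in $\Lie^n_1$'' is not quite right, since $\Lie^n_d$ is quadratic by definition --- the obstruction is an off-diagonal class in the bar homology, or equivalently a hidden relation in the \emph{Koszul dual} $\tCom^n_{d'}$, which is what Proposition~\ref{prop:tComDim} detects.

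The paper's actual argument for non-Koszulness is uniform in $n\geq 3$ and does not split on the parity of $n$. It passes to $\tLie^n_0$ (always the ``wrong'' parity), uses Proposition~\ref{prop:tComDim} to see that $(\tCom^n_1)^c$ has only two nontrivial cogenerators, and then works inside the cobar complex $\Omega((\tCom^n_1)^c)$: one constructs an explicit cycle $\alpha_n-\beta_n$ in homological degree $1$ and arity $n^2+n-1$, and shows it is not a boundary by exhibiting a tree monomial $\omega_n$ (with the $(2n-1)$-ary vertex sandwiched between $n$-ary ones, so no internal edge joins two $n$-ary vertices) that appears in $\alpha_n-\beta_n$ but cannot appear in $\partial$ of anything. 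The case $n=2$ is handled separately by the Getzler--Kapranov positivity argument.
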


Note that in the view of Proposition \ref{prop:KoszulDuals} and properties of operadic suspension, it is enough to consider the operad $\tCom^n_d$ for various $n$ and $d$. Let us begin with describing the underlying collection of that operad. 

\begin{proposition}\label{prop:tComDim} For even $d$, the operad $\tCom_d^n$ has a quadratic Gr\"obner basis. Its arity $k$ component $\tCom_d^n$ is one-dimensional for $k\cong 1\pmod{n-1}$ and is equal to zero otherwise.

For odd $d$, the arity $k$ component of the operad $\tCom_d^n$ is one-dimensional for $k=1,n$, or $2n-1$, and is equal to zero otherwise. 
\end{proposition}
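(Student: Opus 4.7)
The plan is to work in the shuffle operad associated to $\tCom_d^n$ (the forgetful functor preserves arity dimensions) and to exploit a parity dichotomy coming from the Koszul sign $(-1)^{d^2}$ that appears in the operadic associativity axiom whenever two occurrences of $\mu$ must be exchanged in a parallel composition; this sign is trivial when $d$ is even and equals $-1$ when $d$ is odd, and is the ultimate source of the different dimension counts.

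For $d$ even, the defining identities $\mu \circ_i \mu = \mu \circ_j \mu$ unfold, via the forgetful functor, into a quadratic family of shuffle relations. Choosing the reverse path-lex order on shuffle tree monomials, each such relation has a well-defined leading term (the one in which $\mu$ is composed at the rightmost admissible slot), and I plan to verify by a direct Buchberger computation at weight $3$ that all S-polynomials reduce to zero; since all signs are trivial, this reduces to repeated use of the quadratic relations and the sign-free sequential associativity axiom. This gives the claimed quadratic Gröbner basis. The normal monomials are then classified as the left combs of $\mu$'s with the canonical shuffle at each stage, of which there is exactly one per arity $k \equiv 1 \pmod{n-1}$; nonvanishing of such a normal monomial in $\tCom_d^n$ is witnessed by an obvious polynomial-type algebra.

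For $d$ odd, the crucial calculation is that the weight-$3$ element $B := \mu \circ_1 (\mu \circ_1 \mu)$ vanishes. Starting from $B = (\mu \circ_n \mu) \circ_1 \mu$ (using the defining relation on the bottom pair) and applying the parallel-composition axiom produces
\[
B = (-1)^{d^2}(\mu \circ_1 \mu) \circ_{2n-1} \mu,
\]
and a further use of the defining relation together with the sequential associativity axiom rewrites the right-hand side as $B$ itself. For $d$ odd, this gives $B = -B$, so $B = 0$ in characteristic zero. Since the defining relations already collapse the weight-$2$ component to a single shuffle monomial, every weight-$3$ shuffle tree has a weight-$2$ right divisor that the quadratic relations rewrite, and an entirely parallel argument kills it. Thus the entire weight-$3$ component of $\tCom_d^n$ vanishes. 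Because $\tCom_d^n$ is generated in weight one, this forces the vanishing of all components of weight $\geq 3$, leaving only arities $1$, $n$, $2n-1$ with nonzero (and one-dimensional) components.

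The principal obstacle is the global reduction claim in the odd case: verifying that \emph{every} weight-$3$ shuffle monomial, and not merely the particular $B$ above, reduces to zero. I expect to handle this by running Buchberger's algorithm in full in the shuffle operad, showing that each weight-$3$ S-polynomial reduces, after applying the weight-$2$ collapse forced by the quadratic relations, to a scalar multiple of $B$; once $B = 0$ is in hand, the vanishing of the weight-$3$ component is complete and the dimension formula follows.
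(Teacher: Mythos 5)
Your proposal is correct and follows essentially the same route as the paper: a Gr\"obner-basis analysis in the shuffle operad in which the only thing that changes with the parity of $d$ is the Koszul sign $(-1)^{d^2}$, which for odd $d$ forces the cubic element $B=\mu\circ_1(\mu\circ_1\mu)$ (equivalently $\mu\circ_n(\mu\circ_n\mu)$, the extra Gr\"obner basis element the paper exhibits) to satisfy $B=-B$ and hence vanish. The only differences are organisational: for even $d$ the paper avoids rerunning Buchberger by identifying $\tCom^n_0$ with $\Com^{[n-1]}$ and invoking Proposition~\ref{prop:PBWVeronese}, and for odd $d$ it cites the analogous computations of \cite{DoKh2,MaRe15} where you carry out the sign calculation explicitly.
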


\begin{proof}
To prove the first statement, note that for the purpose of Gr\"obner basis computation, only the parity of the homological degree matters, so we may assume $d=0$. In this case, we have $\tCom^n_0=\Com^{[n-1]}$, and the result follows from Proposition~\ref{prop:PBWVeronese}. 

To prove the second statement, we note that a computation similar to that of \cite{DoKh2,MaRe15} shows that in addition to the existing quadratic relations, the Gr\"obner basis contains the element $\mu\circ_{n}(\mu\circ_n\mu)$, and hence there are no normal tree monomials of arity greater than $2k-1$, and the first part of the result follows. \end{proof}

It follows that the operad $\tCom^n_d$ is Koszul for even $d$, proving the ``easy'' part of Theorem \ref{th:MockTCom}. 

For $n=2$ and odd $d$, the operad $\tCom^n_d$ is not Koszul; for
instance it is the case because it has the same Poincar\'e series as
the mock-commutative operad \cite{GetKa}, and the latter is shown to
fail the positivity criterion, see \emph{op.\ cit.}, footnote on
p.~180. It remains to establish non-Koszulness of the operad
$\tCom^n_1$. It turns out that the positivity criterion is
inconclusive for this operad (which we establish below), so another
argument is needed. It turns out that it is more beneficial to pass to
the Koszul dual operad and establish that the operad $\tLie^n_0$ is
not Koszul. We shall accomplish that similarly to \cite[Th.~9]{DMR16}
by showing that the minimal model of $\tLie^n_0$ is not isomorphic to
the cobar construction of the cooperad $(\tCom^n_1)^c$ as it would
have been in the Koszul case.

By Proposition~\ref{prop:tComDim}, the arity $k$ component of the operad $\tCom^n_1$ is one-dimensional for  $k = 1,n$ or $2n-1$,  and is equal to zero otherwise. As a consequence, the cobar construction $\Omega((\tCom^n_1)^c)$ is generated by a fully symmetric generator $\ell$ of arity $n$ and degree $0$,  and one fully symmetric generator $\xi$ of arity $2n-1$ and degree $1$. Elements of the cobar complex can be represented as linear combinations of shuffle tree monomials whose internal vertices are either of arity $n$ or arity $2n-1$. In homological degree $0$, allowed shuffle tree monomials have only $n$-ary internal vertices, and in homological degree $1$ all but one internal vertex are $n$-ary, and the remaining vertex is of arity $2n-1$. In this representation, the differential of the cobar construction is the summation of all possible 
expansions of the vertex of arity $2n-1$ into two $n$-ary vertices.

\begin{figure}
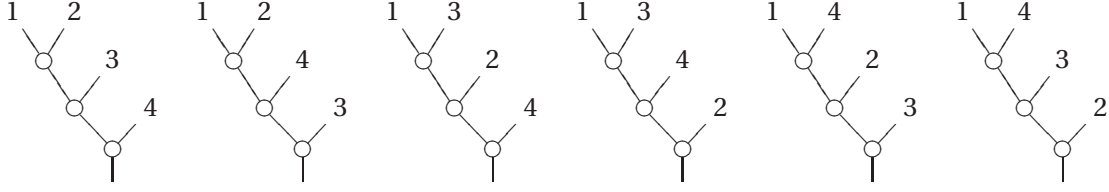

\begin{center}
\llbincomb{}{}{}{1}{2}{3}{4}  \quad 
\llbincomb{}{}{}{1}{2}{4}{3}  \quad
\llbincomb{}{}{}{1}{3}{2}{4}  \quad 
\llbincomb{}{}{}{1}{3}{4}{2}  \quad
\llbincomb{}{}{}{1}{4}{2}{3}  \quad 
\llbincomb{}{}{}{1}{4}{3}{2} 
\end{center}
\caption{\label{o}
The six left combs in $B_1$ for the case of the binary operation $\ell$.
}
\end{figure}

Let us denote by $LC_{(n+1)}$ the set of all $n+1$-fold left comb products of the generator $\ell$ in the cobar complex $\Omega((\tCom^n_1)^c)$. 
The set $LC_{(3)}$ is displayed in \hbox{Figure~\ref{o}}. 

\begin{theorem}
\label{main}
There exist nonzero integers $\epsilon_T \in {\mathbb Z}$ 
given for each shuffle tree $T$ of homological degree $1$ such that for
 \[
\nu := \sum_T \epsilon_T T
 \]
one has
\begin{equation}\label{eq:LeftCombsBoundary}
\partial \nu =  n!\sum_{S \in LC_{(n+1)}} S  \ .
\end{equation}
\end{theorem}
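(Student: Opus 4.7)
The element $\nu$ lies in the degree-$1$, arity-$n^2$ part of $\Omega((\tCom^n_1)^c)$, spanned by shuffle tree monomials built from one $\xi$-vertex (arity $2n-1$, degree $1$) together with $n-1$ vertices labelled $\ell$ (arity $n$, degree $0$); the boundary lies in the corresponding degree-$0$ part, spanned by shuffle trees with $n+1$ copies of $\ell$. The proof therefore comes down to exhibiting the coefficients $\epsilon_T$.

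The first step is to make $\partial\xi$ explicit. By Proposition~\ref{prop:tComDim}, the arity-$(2n-1)$ piece of $\tCom^n_1$, and hence of $(\tCom^n_1)^c$, is one-dimensional, so the cooperadic decomposition into $\calT^c((\tCom^n_1)^c_{(n)})_{(2)}$ is a universal $\mathbb Z$-linear combination of all shuffle tree monomials on two $n$-ary vertices, each occurring with a nonzero coefficient dictated by the identifications $\mu\circ_i\mu=\mu\circ_j\mu$ that are built into $\tCom^n_1$. Applying $\partial$ to any degree-$1$ tree $T$ is then a local operation that expands the unique $\xi$-vertex of $T$ using this formula, producing a signed sum of the degree-$0$ trees $T'$ obtained by resolving $\xi$ into two $n$-ary vertices in all possible shuffle configurations.

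The second step is to encode $\partial\nu=n!\sum_S S$ as a linear system. Every degree-$0$ tree $T'$ has exactly $n$ internal edges (edges joining two internal vertices), and contracting any one of them yields a unique degree-$1$ tree $T(T',e)$; conversely, every degree-$1$ tree arises in this way from a unique pair $(T',e)$. The desired identity thus reads, for each $T'$,
\begin{equation*}
\sum_{e\in E(T')} c(T',e)\,\epsilon_{T(T',e)} \;=\; \begin{cases} n!, & T'\in LC_{(n+1)},\\ 0, & \text{otherwise,}\end{cases}
\end{equation*}
where $E(T')$ is the set of internal edges of $T'$ and $c(T',e)\in\mathbb Z$ are the coefficients read off from $\partial\xi$. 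Since the number of degree-$0$ trees vastly exceeds the number of degree-$1$ trees, this system is highly overdetermined, and even its consistency is a nontrivial assertion.

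The third step, and the crux of the proof, is to exhibit an integer solution $\{\epsilon_T\}$ in which every coefficient is nonzero. My plan is to construct $\nu$ iteratively: starting from a left comb $S_0\in LC_{(n+1)}$, produce an $n!$-th multiple of $S_0$ together with ``error'' degree-$0$ trees as $\partial$ of a single carefully chosen degree-$1$ tree; then rewrite each error term similarly, and continue until only left-comb terms remain. Convergence of this procedure is controlled by a suitable partial order on shuffle trees measuring the deviation from a left comb and strictly decreasing at each step. The main obstacle is the bookkeeping: one must verify that every degree-$1$ shuffle tree $T$ ends up with a nonzero integer weight $\epsilon_T$ after the iterations, and that the total coefficient of each non-left-comb $T'$ in $\partial\nu$ does vanish. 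The paper's mention of Zeilberger's algorithm in its keywords strongly suggests that the resulting family of combinatorial identities is ultimately verified by automated hypergeometric summation.
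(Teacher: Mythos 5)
Your setup is correct as far as it goes: $\nu$ lives in the degree-one part of the cobar complex spanned by shuffle trees with one $\xi$-vertex and $n-1$ copies of $\ell$, the differential expands $\xi$ into two $n$-ary vertices in all shuffle configurations, and the statement amounts to a linear system for the $\epsilon_T$ with one equation per degree-zero tree $T'$. One factual slip along the way: your claim that every degree-one tree arises from a \emph{unique} pair $(T',e)$ is false and contradicts your own description of $\partial$ --- a given degree-one tree $T$ is recovered by contraction from \emph{every} degree-zero tree occurring in $\partial T$, which is exactly why the system is coupled (and overdetermined) rather than decoupled.

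The genuine gap is that the crux of the theorem --- exhibiting the integers $\epsilon_T$, showing they are all nonzero, and verifying that the coefficient of every non-left-comb $T'$ in $\partial\nu$ cancels --- is only announced as a ``plan''. The iterative correction procedure you sketch, governed by a partial order measuring deviation from a left comb, is precisely where the difficulty sits: you would need to show that the procedure terminates, that successive corrections never drive some $\epsilon_T$ to zero, and above all that the overdetermined system is consistent, which you yourself flag as ``a nontrivial assertion'' without resolving it. The paper does not rebuild this from scratch either: its proof reduces the identity to Lemma 11 of \cite{DMR16}, where the analogous statement is established with explicit coefficients for planar trees in the nonsymmetric setting, and observes that the same combinatorics transfers verbatim to shuffle trees (with the pictures mirrored), the sets $B_0$ and $B_1$ of Formula (7) of that lemma becoming $\emptyset$ and $LC_{(n+1)}$ respectively. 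Finally, Zeilberger's algorithm plays no role in this theorem; in this paper it is invoked only in Section~\ref{sec:inverse}, for the asymptotic analysis of the coefficients of the inverse Poincar\'e series.
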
 

\begin{proof}
The proof is based on an analogue of Lemma 11 in~\cite{DMR16}, where the only adjustment needed is to replace `planar tree(s)' by
`shuffle tree(s)', and reflect all the pictures in the mirror. In Formula~(7) of that lemma, $B_0 = \emptyset$
while $B_1=LC_{(n+1)}$. This completes the proof.
\end{proof}

\begin{remark}
A notable difference is that, while in~\cite{DMR16} the set $B_1$ consisted of a single planar rooted tree, in our case it is the sum of all shuffle trees with the same underlying planar tree.
\end{remark}

To continue in the proof of the non-Koszulness, we consider the two elements 
\begin{gather}
\alpha_n=\sum_{\sigma \in Sh_1(n^2-1,n-1)}\ell\circ_{1,\sigma} \nu , \\
\beta_n=\sum_{\sigma \in Sh_1(n-1,n^2-1)}\nu\circ_{1,\sigma} \ell .
\end{gather}
Note that 
 \[
\partial \alpha_n=\sum_{\sigma \in Sh_1(n^2-1,n-1)}\ell\circ_{1,\sigma} \partial\nu=\\
=n!\sum_{\sigma \in Sh_1(n^2-1,n-1)}\ell\circ_{1,\sigma}\sum_{T\in LC_{(n+1)}} T= n!\sum_{T\in LC_{(n+2)}} T
 \]
and 
 \[
\partial \beta_n=\sum_{\sigma \in Sh_1(n-1,n^2-1)}\partial\nu\circ_{1,\sigma}\ell =\\
=n!\sum_{\sigma \in Sh_1(n-1,n^2-1)}\sum_{T\in LC_{(n+1)}} T\circ_{1,\sigma}\ell= n!\sum_{T\in LC_{(n+2)}} T
 \]
Thus, $\partial(\alpha_n-\beta_n)=0$. Let us show that $\alpha_n-\beta_n$ is not equal to a boundary of anything in
the cobar complex.

Consider the following shuffle tree monomial
 \[
\omega_n:=\ell\circ_1 \gamma(\xi;\id,\ldots,\id,\ell,\ldots,\ell) ,
 \]
in words, this is a three-level tree obtained by substituting $n-1$ copies of $\ell$ into the last $n-1$ slots of~$\xi$, and then substituting the result in the first slot of~$\ell$. 

Note that $\omega_n$ appears in $\alpha_n$ with a nonzero coefficient, and does not appear in $\beta_n$ (since it is not obtained as shuffle substitution of $\ell$ in the first slot of anything). Thus, it appears with a nonzero coefficient in the cycle $\alpha_n-\beta_n$. However, this monomial cannot appear in the differential of anything: the differential of $\ell$ is zero, and the differential of $\xi$ can only create trees that have an internal edge between two $n$-ary vertices, while $\omega_n$ does not have such edges. This finishes the proof of Theorem~\ref{th:MockTCom}.

\begin{remark}
The last argument should be compared with the discussion of na\"ive Veronese powers in Proposition~\ref{prop:NaiveVeronese}; indeed, this argument is only possible because the na\"ive second Veronese power of the free operad is different from the second Veronese power as defined in this paper.
\end{remark}

\section{The positivity criterion of Koszulness is not decisive for the operad \texorpdfstring{$\tLie^3_0$}{tL3}}\label{sec:inverse}

In this section, we consider the possibility of using the positivity criterion of Koszulness for the operad $\tLie^n_0$.  Since the Koszul dual of this operad is a very simple cooperad $\left(\tCom^n_{1}\right)^c$, it is natural to try to prove non-Koszulness by establishing that the compositional inverse of the Poincar\'e series of the latter cooperad has negative coefficients. This works for $n=2$, but it turns out that already for $n=3$ the inverse series does not have any negative coefficients, which we demonstrate below. The argument is similar to that of \cite{DMR16}.

We first recall a classical result on inversion of power series. To state it, we use, for a  formal power series $F(t)$, the notation $\left[t^k\right]F(t)$ for the coefficient of $t^k$ in $F(t)$, and the notation $F(t)^{\langle -1\rangle}$ for the compositional inverse of $F(t)$ (if that inverse exists).

\begin{proposition}[Lagrange's inversion formula {\cite[Sec.~5.4]{Stan2}}]\label{prop:LIT}
Let $f(t)$ be a formal power series without a constant term and with a nonzero coefficient of $t$.  Then $f(t)$ has a compositional inverse, and
 \[
\left[t^k\right]f(t)^{\langle -1\rangle}=\frac1k \left[u^{k-1}\right]\left(\frac{u}{f(u)}\right)^k .
 \]
\end{proposition}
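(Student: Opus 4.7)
The plan is to prove this by the classical argument based on formal residues in the field $\bbk((u))$ of formal Laurent series. Existence of the compositional inverse is essentially trivial: writing $f(t) = a_1 t + a_2 t^2 + \cdots$ with $a_1 \ne 0$ and $g(t) = b_1 t + b_2 t^2 + \cdots$, the equation $f(g(t)) = t$ determines each $b_n$ uniquely by induction on $n$, the coefficient $a_1$ providing the only division required.

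For the formula itself, the first step is to set up the residue machinery. I would define, for $\phi(u) = \sum_n c_n u^n \in \bbk((u))$, its formal residue $\mathrm{Res}_{u=0} \phi(u)\, du := c_{-1}$, and record two properties:
\begin{enumerate}
\item[(i)] $\mathrm{Res}_{u=0} \phi'(u)\, du = 0$ for every $\phi \in \bbk((u))$, since the $u^{-1}$-coefficient of a formal derivative vanishes;
\item[(ii)] if $f \in \bbk[[u]]$ has order one at $0$, then the substitution $t \mapsto f(u)$ extends to a ring embedding $\bbk((t)) \hookrightarrow \bbk((u))$ satisfying the change-of-variables identity
\[
\mathrm{Res}_{t=0} \psi(t)\, dt \;=\; \mathrm{Res}_{u=0} \psi(f(u))\, f'(u)\, du .
\]
\end{enumerate}
Property (i) is immediate. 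Property (ii) is the one genuinely nontrivial point, and I would verify it by checking it on Laurent monomials $\psi(t) = t^m$ (the case $m \ne -1$ reducing to (i) applied to $t^{m+1}/(m+1)$ or $f(u)^{m+1}/(m+1)$, and the case $m = -1$ amounting to the identity $f'(u)/f(u) = 1/u + \text{power series}$) and extending by $u$-adic linearity.

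The main computation is then a short chain of rewritings. Starting from the tautology $[t^k]\, g(t) = \mathrm{Res}_{t=0} g(t)\, t^{-k-1}\, dt$, the substitution $t = f(u)$ with $g(f(u)) = u$ via (ii) produces
\[
[t^k]\, g(t) \;=\; \mathrm{Res}_{u=0} \frac{u\, f'(u)}{f(u)^{k+1}}\, du .
\]
Applying (i) to $\phi(u) = u\, f(u)^{-k}$, whose derivative is $f(u)^{-k} - k\, u\, f'(u)\, f(u)^{-k-1}$, converts this into $\tfrac{1}{k}\, \mathrm{Res}_{u=0} f(u)^{-k}\, du$, and the latter residue equals $\tfrac{1}{k}\, [u^{k-1}](u/f(u))^k$ after the trivial re-indexing $[u^{-1}]\, f(u)^{-k} = [u^{k-1}]\, u^k f(u)^{-k}$.

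The main obstacle is property (ii), where one has to pin down what ``substitution of a formal power series into a formal Laurent series'' means and why it respects the residue pairing; this is why one restricts to $f$ of order exactly one, so that negative powers of $f(u)$ remain well-defined Laurent series. Once (ii) is established, the rest of the argument is mechanical.
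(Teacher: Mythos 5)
Your argument is correct: the residue calculus you set up (properties (i) and (ii), the change of variables $t=f(u)$, and the integration-by-parts step via $\phi(u)=u\,f(u)^{-k}$) is the standard proof of Lagrange inversion, and each step checks out, including the one genuinely delicate point, namely that substitution of a series of order one into a Laurent series is well defined $u$-adically and respects the residue. Note, however, that the paper does not prove this proposition at all --- it is quoted as a classical result with a reference to \cite[Sec.~5.4]{Stan2} --- so there is no in-paper argument to compare against; your write-up is a correct self-contained substitute for that citation (and it implicitly uses the standing characteristic-zero hypothesis when dividing by $m+1$ and by $k$).
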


Let us now prove the main result of this section. Namely, we show that the compositional inverse of the power series $g_{\left(\tCom^3_{1}\right)^c}(t)$ has nonnegative coefficients, and hence the positivity criterion \cite{DMR16} cannot be used to establish the non-Koszulness of the operad $\tLie^n_0$.  

\begin{theorem}
The compositional inverse of the power series 
 \[
g_{\left(\tCom^3_{1}\right)^c}(t)=t-\frac{t^3}{6}+\frac{t^{5}}{120}
 \]
is of the form $t\,h(t^2)$, where $h$ is a power series with positive coefficients. 
\end{theorem}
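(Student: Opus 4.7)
The plan is to set up the coefficients of $h$ explicitly via Lagrange inversion, then establish positivity by a creative-telescoping argument. First, since $g(t) = t - t^3/6 + t^5/120$ is an odd polynomial with $g'(0) = 1$, its formal compositional inverse $\phi := g^{\langle -1 \rangle}$ exists and is itself odd, hence of the form $\phi(t) = t\, h(t^2)$ for a unique formal power series $h$ with $h(0) = 1$. This takes care of the shape of the inverse; only the positivity of the coefficients of $h$ remains to be proved.

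Next, I would apply Proposition~\ref{prop:LIT} to $g$. Setting $F(v) := g(\sqrt{v})/\sqrt{v} = 1 - v/6 + v^2/120$, one obtains for each $n \geq 0$
\[
h_n = [s^n]\,h(s) = \frac{1}{2n+1}\,[v^n]\,F(v)^{-(2n+1)}.
\]
Expanding $F(v)^{-(2n+1)} = \sum_{k\ge 0}\binom{2n+k}{k}(v/6 - v^2/120)^k$ by the generalised binomial theorem and extracting the coefficient of $v^n$ produces the explicit finite alternating sum
\[
c_n := (2n+1)\,h_n = \sum_{m=0}^{\lfloor n/2\rfloor}\frac{(-1)^m\,(3n-m)!}{(2n)!\,m!\,(n-2m)!\,6^{\,n-2m}\,120^{\,m}}.
\]
A sanity check on $n=0,1,2,3,4$ recovers the expected values $1$, $1/2$, $3/8$, $14/45$, $779/2880$, all positive.

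The crucial step is to show $c_n > 0$ for every $n \geq 0$. I would apply Zeilberger's creative-telescoping algorithm to the hypergeometric summand above to produce a short linear recurrence with polynomial coefficients satisfied by $c_n$. Combined with positivity of enough initial terms (obtained by direct computation), a sign analysis of this recurrence then yields $c_n > 0$ for all $n$ by induction, and dividing by $2n+1$ gives the desired positivity of the coefficients $h_n$.

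The principal obstacle is precisely this last step. A short calculation shows that the ratio of consecutive absolute values of the summands defining $c_n$ equals
\[
\frac{3\,(n-2m)(n-2m-1)}{10\,(3n-m)(m+1)},
\]
which for $m=0$ is $(n-1)/10$ and hence exceeds $1$ once $n \geq 12$; so no naive pairing of positive and negative terms can work, and the resulting inequalities cannot be handled term by term. One really needs the holonomic structure of the sum to replace an infinite list of inequalities by a finite verification plus a sign-propagating recurrence, which is exactly what Zeilberger's algorithm supplies (its appearance among the paper's keywords foreshadows precisely this use).
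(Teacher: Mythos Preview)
Your setup through the explicit formula for $c_n=(2n+1)h_n$ is correct and matches the paper's own Lagrange-inversion computation. The decisive gap is the sentence ``a sign analysis of this recurrence then yields $c_n>0$ for all $n$ by induction''. The three-term recurrence that Zeilberger's algorithm actually returns here is
\[
s_0(n)\,x_n \;=\; s_1(n)\,x_{n-1}\;-\;s_2(n)\,x_{n-2},
\]
with $s_0,s_1,s_2$ explicit degree-$5$ polynomials that are all \emph{positive} for $n\ge 2$. Because of the minus sign, knowing $x_{n-1},x_{n-2}>0$ does not force $x_n>0$; no amount of checking initial values makes a naive induction go through. Worse, the characteristic polynomial $2560t^2-12000t+9375$ has two positive roots $\lambda_-<1<\lambda_+$, and the coefficient sequence $\{h_n\}$ turns out to be (up to scalar) the \emph{minimal} solution, the one with $h_n/h_{n-1}\to\lambda_-$. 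Minimal solutions of such Poincar\'e--type recurrences are notoriously unstable: any nearby solution is eventually dominated by a $\lambda_+$-growth component, so one cannot simply bound ratios from below by direct inspection of the recursion.

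The paper confronts this head-on. It first computes the radius of convergence of $h$ analytically to pin down that $\lim h_n/h_{n-1}=\lambda_-$. It then introduces an auxiliary solution $\{b_n\}$ of the same recurrence with $b_0=b_1=1$, proves $b_n/b_{n-1}\ge 1$ by an honest inductive inequality (this one \emph{does} work, because $\{b_n\}$ is a dominant solution), and deduces $\lim b_n/b_{n-1}=\lambda_+$. A short telescoping identity then shows that the sequence $h_n/b_n$ is strictly decreasing; since its limit is $0$, every $h_n$ is positive. None of this is a routine ``sign analysis'', and it is precisely the missing idea in your proposal.
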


\begin{proof}
First, let us recall the usual argument explaining the form of the inverse series. By Proposition \ref{prop:LIT}, we have 
 \[
\left[t^k\right](t-\frac{t^3}{6}+\frac{t^{5}}{120})^{\langle -1\rangle}=\frac1k \left[u^{k-1}\right]\left(\frac{u}{u-\frac{u^3}{6}+\frac{u^{5}}{120}}\right)^k=\frac1k \left[u^{k-1}\right]\left(\frac{1}{1-\frac{u^2}{6}+\frac{u^{4}}{120}}\right)^k ,
 \]
and the coefficients on the right vanish unless $k=2n+1$, so the inverse series is of the form $t\,h(t^2)$, where $h$ is some formal power series. 

Let us start the asymptotic analysis of the coefficients of the series $h(t)$. 

\begin{lemma}\label{lm:radius}
The radius of convergence of $h(t)$ is equal to $\frac{16(3+\sqrt{3})}{75}$.
\end{lemma}

\begin{proof}
The radius of convergence of  $\left(t-\frac{t^3}{6}+\frac{t^{5}}{120}\right)^{\langle -1\rangle}$ is equal to the maximal $r$ for which the inverse function of $t-\frac{t^3}{6}+\frac{t^{5}}{120}$ is analytic for the arguments whose modulus is smaller than~$r$. It is obvious that such $r$ is the value of $t-\frac{t^3}{6}+\frac{t^{5}}{120}$ at the modulus of the smallest zero of 
 \[
\left(t-\frac{t^3}{6}+\frac{t^{5}}{120}\right)'=1-\frac{t^2}{2}+\frac{t^{4}}{24} .
 \]
The zeros of the latter are 
 \[
\sqrt{\frac{\frac12\pm\sqrt{\frac14-\frac{4}{24}}}{2\frac1{24}}}=\sqrt{\frac{\frac12\pm\sqrt{\frac1{12}}}{\frac1{12}}}=\sqrt{6\pm2\sqrt{3}},
 \]
so the smallest one is $\rho=\sqrt{6-2\sqrt{3}}$. Thus, the radius of convergence of the inverse series is 
 \[
\rho\left(1-\frac{6-2\sqrt{3}}{6}+\frac{(6-2\sqrt{3})^2}{120}\right)=\rho\frac{6+2\sqrt{3}}{15} .
 \]
Now, as $\left(t-\frac{t^3}{6}+\frac{t^{5}}{120}\right)^{\langle -1\rangle}=t\,h(t^2)$, the radius of convergence of $h(t)$ is equal to 
 \[
\left(\rho\frac{6+2\sqrt{3}}{15}\right)^2=
\frac{(6-2\sqrt{3})(6+2\sqrt{3})^2}{225}=\frac{48(3+\sqrt{3})}{225}=\frac{16(3+\sqrt{3})}{75} .
 \]
\end{proof}

\begin{lemma}
The $n$-th coefficient of $h(t)$ is equal to 
 \[
a_n=\frac{1}{2n+1}\sum_{k=\lfloor n/2\rfloor}^{n} (-1)^{n-k}\binom{2n+k}{k}\binom{k}{n-k}\left(\frac{1}{6}\right)^{2k-n}\left(\frac{1}{120}\right)^{n-k} .
 \]
\end{lemma}

\begin{proof}
Continuing the computation that utilises the Lagrange's inversion formula, we see that the $n$-th coefficient of $h$, or equivalently the coefficient of $t^{2n+1}$ of $(t-\frac{t^3}{6}+\frac{t^{5}}{120})^{\langle -1\rangle}$, is equal to 
 \[
\frac1{2n+1} \left[u^{2n}\right]\left(\frac{1}{1-\frac{u^2}{6}+\frac{u^{4}}{120}}\right)^{2n+1}=\frac1{2n+1} \left[v^{n}\right]\left(\frac{1}{1-\frac{v}{6}+\frac{v^2}{120}}\right)^{2n+1} ,
 \] 
and expanding the latter using the binomial theorem, we get
 \[
\left(\frac{1}{1-\frac{v}{6}+\frac{v^2}{120}}\right)^{2n+1}=\sum_{k\ge 0}\binom{2n+k}{k}\left(\frac{v}{6}-\frac{v^2}{120}\right)^k ,
 \]
therefore the coefficient of $t^{2n+1}$ is given by  
 \[
\frac{1}{2n+1}\sum_{k\ge0}\sum_{2i+k-i=n} \binom{2n+k}{k}\binom{k}{i}\left(\frac{1}{6}\right)^{k-i}\left(-\frac{1}{120}\right)^i .
 \] 
Clearly, $i=n-k$, so this simplifies to
 \[
a_n=\frac{1}{2n+1}\sum_{k=\lfloor n/2\rfloor}^{n}(-1)^{n-k}\binom{2n+k}{k}\binom{k}{n-k}\left(\frac{1}{6}\right)^{2k-n}\left(\frac{1}{120}\right)^{n-k} ,
 \] 
as required.
\end{proof}

The expression $a_n$ is given by the formula which is a sum of ``hypergeometric'' terms, we see that Zeilberger's algorithm \cite[Ch.~6]{AeqB} applies. We used the interface to it provided by the \texttt{sumrecursion} function of \textsf{Maple}; this function implements the Koepf's version of Zeilberger's algorithm \cite[Ch.~7]{Koepf}. This function instantly informs us that the sequence $\{a_n\}$ is a solution to a rather remarkable three term finite difference equation 
\begin{equation}\label{eq:3term}
s_0(n)x_n-s_1(n)x_{n-1}+s_2(n)x_{n-2}=0 , 
\end{equation}
where
\begin{align*}
s_0(n)&=128n(n-1)(2n+1)(2n-1)(5n-6) , \\
s_1(n)&=80(n-1)(2n-1)(5n-1)(15n^2-30n+14) ,\\
s_2(n)&=3(5n-1)(5n-4)(5n-6)(5n-7)(5n-8) .
\end{align*}

The polynomials $s_i(n)$ are of the same degree $5$, and so our equation is of the type considered by Poincar\'e in \cite{Po}. Namely, in \cite[\S2]{Po} linear finite difference equations of order $k$
 \[
s_0(n)x_n+s_1(n)x_{n-1}+\cdots+s_k(n)x_{n-k}=0 
 \]
are considered, with the additional assumption that $s_0(n)$, \ldots, $s_k(n)$ are polynomials of the same degree $d$. To such an equation, one associates its characteristic polynomial
 \[
\chi(t)=\alpha_0t^k+\alpha_1t^{k-1}+\cdots+\alpha_k=0 ,
 \]
where $\alpha_i$ is the coefficient of $t^d$ in $s_i(n)$. If the absolute values of the complex roots of $\chi(t)$ are pairwise distinct, then for any solution $\{a_n\}$ to our equation, the limit 
 \[
\lim_{n\to\infty}\frac{a_n}{a_{n-1}}
 \]
exists and is equal to one of the roots of $\chi(t)$. Usually, that root will be the one which is maximal in absolute value. The particular case when the root is the minimal in absolute value is the hardest both for computations and for the qualitative analysis of the asymptotic behaviour, since in this case the corresponding solution is unique up to proportionality, and so the situation is not stable under small perturbations.  In our case the polynomial $\chi(t)$ is
 \[
2560t^2-12000t+9375 ,
 \]
and its roots are 
 \[
\lambda_-=\frac{25(3-\sqrt{3})}{32}\approx 0.9905853066 \quad \text{and}\quad \lambda_+=\frac{25(3+\sqrt{3})}{32}\approx 3.696914693, 
 \]
so Poincar\'e theorem applies. In fact, $\lambda_-$ is immediately seen to be the inverse of the radius of convergence of $h(t)$. By the usual ratio formula for the radius of convergence, we see that 
 $
\lim_{n\to\infty}\frac{a_{n}}{a_{n-1}}=\lambda_- .
 $

Let us consider the auxiliary sequence $\{b_n\}$ satisfying the same finite difference equation \eqref{eq:3term} and the initial conditions $b_0=1$, $b_1=1$. 

\begin{lemma}
All terms of the sequence $\{b_n\}$ are positive for $n>0$, and we have 
 \[
\lim_{n\to\infty}\frac{b_n}{b_{n-1}}=\lambda_+.
 \]
\end{lemma}

\begin{proof}
First, let us show that for all $n\ge0$ we have $\frac{b_n}{b_{n-1}}\ge 1$. This is easy to see by induction on $n$. First, for $n=1$, the statement is obvious. Next, if we suppose that it is true for all values less than the given $n$, we have
 \[
\frac{b_n}{b_{n-1}}=\frac{s_1(n)}{s_0(n)}-\frac{s_2(n)b_{n-2}}{s_0(n)b_{n-1}}\ge \frac{s_1(n)}{s_0(n)}-\frac{s_2(n)}{s_0(n)},
 \]
and so it suffices to show that 
 \[
\frac{s_1(n)}{s_0(n)}-\frac{s_2(n)}{s_0(n)}\ge1 .
 \]
The roots of the polynomial $s_0(n)$ are $0,1,-\frac12,\frac12,\frac65$, so this polynomial assumes positive values in the given range. Thus, the above inequality is equivalent to 
 \[
0>s_0(n)-s_1(n)+s_2(n)=-65n^5 - 9982n^4 + 36457n^3 - 45402n^2 + 21832n - 2912 .
 \]
Using computer algebra software, we find that the latter polynomial has the largest root approximately equal to $1.403$, so for all $n\ge2$ it assumes negative values, and step of induction is proved. Also, by Poincar\'e Theorem, the limit of the ratio $\frac{b_n}{b_{n-1}}$ as $n\to\infty$ is equal to either $\lambda_-$ or $\lambda_+$. However, 
$1>\lambda_-$, so the inequality $\frac{b_n}{b_{n-1}}\ge 1$ shows that the first of the two alternatives is impossible. Hence, the limiting value is $\lambda_+$.  
\end{proof}

Our results thus far imply  that 
 $
\lim\limits_{n\to\infty}\frac{a_n}{b_n}=0,
 $ 
as
 \[
\frac{a_{n+1}}{b_{n+1}}=\frac{a_n}{b_n}\frac{\frac{a_{n+1}}{a_n}}{\frac{b_{n+1}}{b_n}},
 \]
and so $\frac{a_{n+1}}{b_{n+1}}$ is a multiple of $\frac{a_n}{b_n}$ by a factor close to $\frac{\lambda_-}{\lambda_+}<1$ for large~$n$, and thus our sequence can be bounded from above in absolute value by a geometric sequence with a zero limit.

Now it is easy to complete the proof. We note that 
\begin{multline*}
\frac{a_n}{b_n}-\frac{a_{n-1}}{b_{n-1}}=\frac{s_1(n)a_{n-1}-s_2(n)a_{n-2}}{s_1(n)b_{n-1}-s_2(n)b_{n-2}}-\frac{a_{n-1}}{b_{n-1}}=\\
\frac{(s_1(n)a_{n-1}-s_2(n)a_{n-2})b_{n-1}-(s_1(n)b_{n-1}-s_2(n)b_{n-2})a_{n-1}}{(s_1(n)b_{n-1}-s_2(n)b_{n-2})b_{n-1}}=\\
\frac{s_2(n)(a_{n-1}b_{n-2}-a_{n-2}b_{n-1})}{s_0(n)b_nb_{n-1}}=
\frac{s_2(n)b_{n-2}}{s_0(n)b_n}\left(\frac{a_{n-1}}{b_{n-1}}-\frac{a_{n-2}}{b_{n-2}}\right)
\end{multline*}
The roots of the polynomial $s_2(n)$ are $\frac15,\frac45,\frac65,\frac75,\frac85$, so for $n\ge 2$ the sign of $\frac{a_n}{b_n}-\frac{a_{n-1}}{b_{n-1}}$ is the same as the sign of $\frac{a_{n-1}}{b_{n-1}}-\frac{a_{n-2}}{b_{n-2}}$, and hence the same as the sign of 
\[\frac{a_1}{b_1}-\frac{a_0}{b_0}=-\frac56<0 .\] Thus, $\left\{\frac{a_n}{b_n}\right\}$ is a strictly decreasing sequence.  For a decreasing sequence with the limit zero, all terms must be positive, and hence $a_n$ is positive for all $n>0$. 
\end{proof}

\bibliographystyle{amsplain}
\providecommand{\bysame}{\leavevmode\hbox to3em{\hrulefill}\thinspace}

\end{document}